\newtheorem{theorem}{Theorem}[section]
\newtheorem{lemma}[theorem]{Lemma}
\newtheorem{proposition}[theorem]{Proposition}
\newtheorem{corollary}[theorem]{Corollary}
\theoremstyle{definition}
\newtheorem{definition}[theorem]{Definition}
\newtheorem{remark}[theorem]{Remark}
\numberwithin{equation}{section}
\newtheorem{example}[theorem]{Example}
 \DeclareMathOperator{\Ker}{Ker}
\DeclareMathOperator{\Image}{Im} \DeclareMathOperator{\Ext}{Ext}
\DeclareMathOperator{\Hom}{Hom}
\DeclareMathOperator{\uExt}{\underline{Ext}}
\DeclareMathOperator{\uHom}{\underline{Hom}}
\DeclareMathOperator{\id}{id}
\DeclareMathOperator{\Gr}{Gr}
\DeclareMathOperator{\hdet}{hdet}
\def\bt{\begin{theorem}}
\def\et{\end{theorem}}
\def\bl{\begin{lemma}}
\def\el{\end{lemma}}
\def\br{\begin{remark}}
\def\er{\end{remark}}
\def\bc{\begin{corollary}}
\def\ec{\end{corollary}}
\begin{document}

\title[Nakayama automorphisms of graded double Ore extensions]{Nakayama automorphisms of graded double Ore extensions of Koszul Artin-Schelter regular algebras with nontrivial skew derivations}

\author{Yan Cao}
\address{Cao: Department of Mathematics, Zhejiang Sci-Tech University, Hangzhou 310018, China}
\email{caoyannnnn@163.com}

\author{Yuan Shen}
\address{Shen: Department of Mathematics, Zhejiang Sci-Tech University, Hangzhou 310018, China}
\email{yuanshen@zstu.edu.cn}

\author{Xin Wang}
\address{Wang: School of Science, Shandong Jianzhu University, Jinan 250101, China}
\email{wangxin19@sdjzu.edu.cn}


\date{}

\begin{abstract}
Let $A$ be a Koszul Artin-Schelter regular algebra and  $B=A_P[y_1,y_2;\varsigma,\nu]$ be a graded double Ore extension of $A$ where $\varsigma:A\to M_{2\times 2}(A)$ is a graded algebra homomorphism and $\nu:A\to A^{\oplus 2}$ is a degree one $\varsigma$-derivation. We construct a minimal free resolution for the trivial module of $B$, and it implies that $B$ is still Koszul. We introduce a homological invariant called $\varsigma$-divergence of $\nu$, and with its aid,  we obtain a precise description of the Nakayama automorphism of $B$. A twisted superpotential $\hat{\omega}$ for $B$ with respect to the Nakayama automorphism is constructed so that $B$ is isomorphic to the derivation quotient algebra of $\hat{\omega}$.
\end{abstract}

\subjclass[2020]{16S36, 16S37, 16E65, 16S38, 16W50}

\keywords{Koszul Artin-Schelter regular algebras, Graded double Ore extensions, Nakayama automorphisms, Twisted superpotentials}

\maketitle

\section*{Introduction}
Nakayama automorphism is an important homological invariant for skew Calabi-Yau algebras. It is closely related to noncommutative invariant theory, Zariski cancellation problem, and Batalin-Vilkovisky algebra structures on the Hochschild cohomology of skew Calabi-Yau algebras, and so on (see e.g. \cite{CWZ, KK, LMZ}). However,  it is not easy to compute. A great deal of effort has been made to describe Nakayama automorphisms of skew Calabi-Yau algebras in ungraded and graded cases (see e.g. \cite{HVZ,LM,LWW,LMZ,LMZ1,RRZ1,SL,SZL,WZ,ZSL,ZVZ}, and references therein).

Artin-Schelter regular algebras, as connected graded skew Calabi-Yau algebras, originate from the study of noncommutative projective algebraic geometry. The classification, ring-theoretic and homological properties of Artin-Schelter regular algebras have undergone decades of study. Notably, algebraic extensions, such as regular normal extensions, graded Ore extensions and graded double Ore extensions, etc., have built a bridge between low-dimensional and high-dimensional Artin-Schelter regular algebras. Hence, to study the behavior of various properties under different algebraic extensions is of great help in understanding Artin-Schelter regular algebras. This approach also applies to the characterization of Nakayama automorphisms. There is a precise homological identity for Nakayama automorphisms of Artin-Schelter regular algebras under regular normal extensions (see \cite{RRZ1,ZSL}), and the alteration of Nakayama automorphisms of Artin-Schelter regular algebras under graded Ore extensions is addressed in \cite{SG,WZ}  for most cases.

Double Ore extension was introduced by Zhang and Zhang, primarily for classification of $4$-dimensional Koszul Artin-Schelter regular algebras (see \cite{ZZ1,ZZ2}). Roughly speaking, a double Ore extension of an algebra $A$ is adding two new variables $y_1,y_2$ to $A$ simultaneously, and there are some relations among $y_1$, $y_2$ and elements of $A$ associated with a parameter $P=\{p_{12},p_{11}\}$ constituted by two elements of the ground field, a tail $\kappa=\{\kappa_0,\kappa_1,\kappa_2\}\subseteq A$, an algebra homomorphism $\varsigma=(\varsigma_{ij}):A\to M_{2\times 2}(A)$ and a $\varsigma$-derivation $\nu=(\nu_i):A\to A^{\oplus 2}$. Typically, a double Ore extension is denoted by $A_P[y_1,y_2;\varsigma,\nu,\kappa]$. Compared with a length two iterated Ore extension, a double Ore extension may produce much more classes of algebras, while it appears more mysterious since the images of $\varsigma$ and $\nu$ are formed by matrices.

Zhang and Zhang proved that graded double Ore extensions preserve Artin-Schelter regularity in \cite{ZZ1}, yet whether many other properties are preserved remains open, such as domain, noetherian property, Auslander regularity, etc. Fortunately, Zhu, Van Oystaeyen and Zhang proved that trimmed graded double Ore extensions, whose tails and skew derivations are both trivial, preserve Koszulity, and they compute Nakayama automorphisms of trimmed graded double Ore extensions of any Koszul Artin-Schelter regular algebras in \cite{ZVZ}. In this paper, nontrivial skew derivations will be taken into account, and we aim to describe the Nakayama automorphism of graded double Ore extension $A_P[y_1,y_2;\varsigma,\nu]$ precisely when $A$ is a Koszul Artin-Schelter regular algebra and $\nu$ is nontrivial. 

Let $A=T(V)/(R)$ be a Koszul algebra, and $B=A_P[y_1,y_2;\varsigma,\nu]$ be a graded double Ore extension of $A$, where the element $p_{12}$ in $P$ is nonzero. In this case, $\varsigma$ is invertible in the sense of Definition \ref{def: inverse}, and we find that $\varsigma^T$ is also invertible whose inverse is denoted by $\varsigma^{-T}$ (see Proposition \ref{prop: invertible means T-invertible}). To deal with the $\varsigma$-derivations $\nu$, we make use of a method comparable to the one used for graded Ore extensions in \cite{SG}. We construct a quadruple consisting of four classes of linear maps for $\nu$:
$$
\begin{array}{ll}
\delta_{i,r}:W_i\to (W_i\otimes V)^{\oplus 2}, &\delta_{i,l}:W_i\to (V\otimes W_i)^{\oplus 2}, \\
\upsilon_{i,r}:W_i\to W_{i+1}\otimes V,&\upsilon_{i,l}:W_i\to V\otimes W_{i+1},
\end{array}
$$
where $W_1=V$ and $W_j=\bigcap_{s=0}^{j-2} V^{\otimes s}\otimes R\otimes V^{\otimes j-2-s}$ where $j\geq 2$ and $i\geq 1$ (see Lemma \ref{lemma: construction of delta_r and delta_l}, Lemma \ref{lem: construction of upsilon} and Lemma \ref{lem: construction of upsilon l}). It is worth noting that $\{\upsilon_{i,r}\}$ and two left ones $\{\delta_{i,l}\}$ and $\{\upsilon_{i,l}\}$ are all related with  some homotopies. 

The first main result is that from the Koszul complex for the trivial module of $A$, we construct a minimal free resolution for the trivial module of $B$ whose differentials have been given concrete expression involving $\{\delta_{i,r}\}$ and $\{\upsilon_{i,r}\}$ (see Theorem \ref{thm: minimial free resolution}). The technique we used is taking a length two iterated mapping cone construction. As a consequence, we directly prove that $B=A_P[y_1,y_2;\varsigma,\nu]$ with nontrivial skew derivation is still Koszul (see Corollary \ref{cor: double ore extension preserve Koszul}).

Assume $A$ is also an Artin-Schelter regular algebra of dimension $d$. Then $W_d$ is $1$-dimensional, and let $\omega$ be a basis of $W_d$. As a generalization of \cite[Theorem 1.2]{MS}, we have the following result.
\begin{theorem}(Theorem \ref{thm: hdet is sigma d})
    Let $A=T(V)/(R)$ be a Koszul Artin-Schelter regular algebra of dimension $d$ and $\varsigma:A\to M_{2\times 2}(A)$ be an invertible graded algebra homomorphism. Write $\sigma=\varsigma_{\mid V}$, Then
    $$
    \sigma^{\boxtimes d}(\omega)=(\hdet\varsigma)\omega.
    $$
\end{theorem}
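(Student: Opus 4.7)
The plan is to follow and adapt the proof of \cite[Theorem 1.2]{MS} to the matrix-valued framework. First, I would unfold the notation $\sigma^{\boxtimes d}$ as iterated matrix multiplication: viewing $\sigma=\varsigma_{\mid V}$ as a matrix $(\sigma_{ij})$ of linear maps $V\to V$, the map $\sigma^{\boxtimes d}:V^{\otimes d}\to M_{2\times 2}(V^{\otimes d})$ has $(i,j)$-entry
\[
\sigma^{\boxtimes d}(v_1\otimes\cdots\otimes v_d)_{ij} = \sum_{k_1,\ldots,k_{d-1}} \sigma_{ik_1}(v_1)\otimes\sigma_{k_1 k_2}(v_2)\otimes\cdots\otimes\sigma_{k_{d-1}j}(v_d).
\]
Since $\varsigma$ is an algebra homomorphism, $\sigma^{\boxtimes 2}(R)\subseteq M_{2\times 2}(R)$. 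A direct induction on $d$ using the defining intersection $W_d=\bigcap_{s=0}^{d-2} V^{\otimes s}\otimes R\otimes V^{\otimes d-2-s}$ then yields $\sigma^{\boxtimes d}(W_d)\subseteq M_{2\times 2}(W_d)$. Because $W_d=\kk\omega$, there is a unique matrix $N\in M_{2\times 2}(\kk)$ with $\sigma^{\boxtimes d}(\omega)=N\omega$, so the theorem reduces to showing $N=(\hdet\varsigma)I_2$.

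Next, I would identify $N$ homologically through the Koszul resolution of the trivial module. The minimal Koszul resolution $K_\bullet(A)\to\kk$ has top term $W_d\otimes A(-d)\cong A(-d)$. Using the $A$-bimodule structure on $A^{\oplus 2}$ induced by $\varsigma$, one lifts $\varsigma$ to a chain endomorphism of a suitable twist of $K_\bullet$; the top-degree component of this lift is exactly $\sigma^{\boxtimes d}|_{W_d}$, which gives the matrix $N$. On the other hand, the definition of the homological determinant in this matrix-valued setting is precisely that the induced action at the top of the Koszul resolution of $\kk$ is the scalar matrix $(\hdet\varsigma)I_2$. Comparing the two descriptions gives the desired identity.

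The main obstacle is to establish that $N$ is actually a scalar matrix, rather than an arbitrary element of $M_{2\times 2}(\kk)$. This is where the invertibility hypothesis enters essentially: together with Proposition \ref{prop: invertible means T-invertible}, one has the inverse $\varsigma^{-T}$, and the compatibility of $\sigma^{\boxtimes d}$ with this inverse through the self-duality of the Koszul complex forces $N$ to commute with all of $M_{2\times 2}(\kk)$, hence to be scalar. Once scalarity is secured, matching the scalar with $\hdet\varsigma$ is a direct unpacking of definitions. I expect the scalarity step to be the principal technical point; the stabilization $\sigma^{\boxtimes d}(W_d)\subseteq M_{2\times 2}(W_d)$ and the Koszul-complex bookkeeping are routine adaptations of the argument in \cite{MS}.
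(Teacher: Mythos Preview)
You have misread the statement. In this paper $\hdet\varsigma$ is not a scalar but a $2\times 2$ matrix over $k$: by definition (Section~\ref{subsection: homological determinant}) it is the matrix $\widetilde H$ with $E(\widetilde\sigma)(e_1,e_2)=(e_1,e_2)\widetilde H$, where $e_1,e_2$ is the standard basis of $(E^d(A))^{\oplus 2}\cong (W_d^*)^{\oplus 2}$. The equality $\sigma^{\boxtimes d}(\omega)=(\hdet\varsigma)\omega$ is therefore an identity in $M_{2\times 2}(W_d)$, asserting that your matrix $N$ \emph{equals} $\hdet\varsigma$, not that $N=(\hdet\varsigma)I_2$. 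The ``main obstacle'' you identify---proving $N$ is scalar---is not part of the theorem, and is false in general: in the paper's own Example~\ref{exa: example 1} one computes $\hdet\varsigma=\begin{pmatrix}p&0\\0&-p\end{pmatrix}$ with $p=\sqrt{-1}$, which is not a scalar matrix.

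Once this is corrected, the proof is much shorter than you anticipate and is essentially a definitional unwinding. You have already observed that $\sigma^{\boxtimes d}(W_d)\subseteq M_{2\times 2}(W_d)$, giving a matrix $H$ with $\sigma^{\boxtimes d}(\omega)=H\omega$. The paper simply evaluates $E(\widetilde\sigma)(e_i)$ on a generic element of $(W_d\otimes A)^{\oplus 2}$ using $\widetilde\sigma_d=(\sigma^{\boxtimes d}\boxtimes\varsigma)^T\cdot-$ and reads off that $E(\widetilde\sigma)(e_1,e_2)=(e_1,e_2)H$; comparing with the definition of $\hdet\varsigma$ gives $H=\hdet\varsigma$. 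No self-duality argument or use of $\varsigma^{-T}$ is needed here---the invertibility of $\varsigma$ is used only to ensure the definition of $\hdet\varsigma$ makes sense (so that the twisted resolution $((W_i\otimes A)^{\oplus 2})^\varsigma$ is free).
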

\noindent The definition of notation ``$\boxtimes$'' can be found in Section \ref{subsec: notations}, and the homological determinant $\hdet$ of $\varsigma$ is defined firstly in \cite{ZVZ} (see Section \ref{subsection: homological determinant}, or \cite{SZL} for a more general case). Besides, there is a pair  of elements $\delta_r=(\delta_{r;i})$ and $\delta_l=(\delta_{l;i})$ in $V^{\oplus 2}$ such that
$$
\delta_{d,r}(\omega)=\begin{pmatrix}
        \omega\otimes \delta_{r;1}\\
    \omega\otimes\delta_{r;2}
\end{pmatrix},\qquad
\delta_{d,l}(\omega)=\begin{pmatrix}
       \delta_{l;1}\otimes \omega\\
    \delta_{l;2}\otimes\omega
\end{pmatrix}.
$$
Although the construction of the quadruple $(\{\delta_{i,r}\},\{\delta_{i,l}\},\{\upsilon_{i,r}\},\{\upsilon_{i,l}\})$ is not unique, the element 
$$
\delta_r+\mu_A^{\oplus 2}(\sigma^{-T}\cdot\delta_l)
$$
in $V^{\oplus 2}$ is independent on the choices of quadruples for $\nu$, where $\sigma=\varsigma_{\mid V}$ and $\mu_A$ is the Nakayama automorphism of $A$ (see Proposition \ref{coro: relation between delta_r and delta l}). It is an analogous to the case of skew derivations in Ore extensions discussed in \cite{LM,SG}, we also call it the \emph{$\varsigma$-divergence} of $\nu$, denoted by $\mathrm{div}_{\varsigma} \nu$.

Van den Bergh showed that describing the Nakayama automorphism of $B$ can be achieved by computing the Yoneda product of $E(B)$ in \cite{V}. Based on the  preparations above, we compute the Yoneda product of the Ext-algebra $E(B)$, and then obtain another main result.

\begin{theorem}(Theorem \ref{thm: nakayama automorphism of double Ore extension}) Let $B=A_P[y_1,y_2;\varsigma,\nu]$ be a graded double Ore extension of a Koszul Artin-Schelter regular algebra $A$. Then the Nakayama automorphism $\mu_B$ of $B$ satisfies
\begin{align*}
{\mu_{B}}_{\mid A}=(\det\varsigma)^{-1}\mu_A,\qquad
\mu_B\begin{pmatrix}
    y_1\\y_2
\end{pmatrix}
=-(\mathbb{J}^T)^{-1}\mathbb{J}\hdet\varsigma
\begin{pmatrix}
    y_1\\y_2
\end{pmatrix}
-(\mathbb{J}^T)^{-1}\mathbb{J} \mathrm{div}_{\varsigma}\nu,
\end{align*}
where $\mu_A$ is the Nakayama automorphism of $A$,  $\det\varsigma$ is the determinant of $\varsigma$ (see \cite{ZZ1}, or Definition \ref{def: determinant of varsigma}) and $$\mathbb{J}=\begin{pmatrix} -p_{11}&-p_{12}\\
1&0
\end{pmatrix}.$$
\end{theorem}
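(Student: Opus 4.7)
The plan is to apply Van den Bergh's principle that the Nakayama automorphism $\mu_B$ of $B$ is encoded by the Frobenius structure on the Ext-algebra $E(B) = \bigoplus_i \Ext_B^i(\kk,\kk)$. Since $B$ is Koszul (Corollary \ref{cor: double ore extension preserve Koszul}) and AS-regular of dimension $d+2$, $E(B)$ is a finite-dimensional Frobenius algebra with $E(B)_{d+2}\cong \kk$, and its induced Nakayama automorphism is dual to $\mu_B$. It therefore suffices to compute the Yoneda pairing
$$
E(B)_1 \otimes E(B)_{d+1} \longrightarrow E(B)_{d+2} \cong \kk
$$
on generators, where $E(B)_1 \cong V^* \oplus \kk\{y_1^*, y_2^*\}$, and the pairing is to be read directly off the minimal free resolution of Theorem \ref{thm: minimial free resolution}.

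First I would exploit the two-stage mapping cone structure of that resolution to produce a concrete basis $\xi_1,\xi_2$ of $E(B)_{d+1}$ dual to the classes of $y_1\otimes\omega, y_2\otimes\omega$ at the top, where $\omega$ is the generator of $W_d$ from Theorem \ref{thm: hdet is sigma d}. Given $v^* \in V^*$, I would compute its Yoneda products with $\xi_1, \xi_2$ by lifting through the three stages of the cone. The $A$-layer of this lift is governed by the identity $\sigma^{\boxtimes d}(\omega) = (\hdet \varsigma)\,\omega$; combining this with the normalization dictated by the $(y_1,y_2)$-block of the cone yields the restriction formula $\mu_B|_A = (\det\varsigma)^{-1}\mu_A$.

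To compute $\mu_B$ on $y_1, y_2$, I would pair $y_i^*$ against $\xi_1, \xi_2$. The top differentials of the resolution are built from the quadratic relation matrix $\mathbb{J}$ (encoding $y_2y_1 - p_{12}y_1y_2 - p_{11}y_1^2 = 0$) on the right and its transpose $\mathbb{J}^T$ on the Koszul-dual side. Pairing against $\xi_j$ reduces, after unpacking, to a $2\times 2$ matrix identity that produces the common prefactor $-(\mathbb{J}^T)^{-1}\mathbb{J}$. The leading term $-(\mathbb{J}^T)^{-1}\mathbb{J}\,\hdet\varsigma\,(y_1,y_2)^T$ comes from the purely quadratic part of the top of the resolution, while the shift $-(\mathbb{J}^T)^{-1}\mathbb{J}\,\mathrm{div}_\varsigma\nu$ arises from the linear tail contributed by $\nu$. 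Specifically, propagating the homotopies $\{\upsilon_{i,r}\}$ up the resolution and using the duality $\sigma^{-T}$ to compare the right and left constructions shows that the only surviving linear contribution at the top is $\delta_r + \mu_A^{\oplus 2}(\sigma^{-T}\cdot\delta_l) = \mathrm{div}_\varsigma\nu$, which is well-defined by Proposition \ref{coro: relation between delta_r and delta l}.

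The main obstacle is the bookkeeping in the two-stage mapping cone: the differentials in degrees $d+1$ and $d+2$ mix contributions from all four families $\{\delta_{i,r}\}, \{\delta_{i,l}\}, \{\upsilon_{i,r}\}, \{\upsilon_{i,l}\}$, and one must choose mutually compatible cochain lifts at three levels of the cone to compute the Yoneda product. Identifying precisely which combination of $\delta_r$ and $\delta_l$ survives at the top of the Ext-pairing, and recognizing it as $\mathrm{div}_\varsigma\nu$ only after applying $\mu_A$ and invoking the AS-regular duality between $\sigma$ and $\sigma^{-T}$, is the delicate step that gives the formula its stated form.
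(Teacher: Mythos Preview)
Your overall strategy---compute the Frobenius pairing on $E(B)$ via Yoneda products read off the minimal resolution of Theorem \ref{thm: minimial free resolution}, then dualize using Theorem \ref{thm: properties of Koszul regular algebras}(b)---is exactly the paper's approach, and your identification of $-(\mathbb{J}^T)^{-1}\mathbb{J}$ as the effect of the two sides of the relation $Y^T\mathbb{J}Y$ in the pairing is correct.

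Two points need correction, however. First, $E^{d+1}(B)$ is not two-dimensional: from the resolution, $F_{d+1}\cong W_{d-1}\otimes B(-2)\oplus (W_d\otimes B(-1))^{\oplus 2}$, so $E^{d+1}(B)$ has basis $\eta_1^*,\dots,\eta_n^*,\omega_1^*,\omega_2^*$ (in the paper's notation; your $\xi_1,\xi_2$ are the $\omega_j^*$). You must pair $x_i^*$ against the $\eta_j^*$ as well as the $\omega_j^*$ to pin down $\mu_{E(B)}(x_i^*)$, since the Frobenius form is only nondegenerate on the full space. Second, the restriction ${\mu_B}_{|A}=(\det\varsigma)^{-1}\mu_A$ does \emph{not} come from the identity $\sigma^{\boxtimes d}(\omega)=(\hdet\varsigma)\omega$: in Lemma \ref{lemma: Yoneda product} the products $x_i^*\ast\eta_j^*$ and $\eta_j^*\ast x_i^*$ differ by $\det\sigma$ acting on the first tensor factor of $\omega$ (arising from the lift $\psi_{d+1}$ through the $W_{d-1}$-summand), and it is this together with Lemma \ref{lemma: Yoneda product in E(A)} that produces the matrix $U^{-1}L$. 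The homological determinant $\hdet\varsigma$ enters only in the $\omega_j^*$-block and governs $\mu_B(y_i)$, not $\mu_B|_A$. Finally, the homotopies $\{\upsilon_{i,r}\}$ play no role in the Yoneda products of Lemma \ref{lemma: Yoneda product}; the lifts needed (Lemmas \ref{lem: E^1(B) E^{d+1}(B)} and \ref{lem: E{d+1}(B) ast E1(B)}) use only $\delta_{d,r}$, $\delta_{s,l}$, $\upsilon_{s-1,l}$, and the $\Lambda_j$ built from $\sigma$.
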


Moreover, any Koszul Artin-Schelter regular algebra is isomorphic to a derivation quotient algebra defined by a twisted superpotential with respect to the Nakayama automorphism (see \cite{BSW,DV,HVZ}). Due to this fact, our last main result is the construction of a twisted superpotential $\hat{\omega}$ for $B=A_P[y_1,y_2;\varsigma,\nu]$ such  that $B$ is a derivation quotient algebra defined by $\hat{\omega}$ (see Theorem \ref{thm: twisted superpotential}). A few more words, the graded double Ore extension $B=A_P[y_1,y_2;\varsigma,\nu]$ we discussed can be viewed as a twisted tensor product of $A$ and a $2$-dimensional Artin-Schelter regular algebra $J=k\langle y_1,y_2\rangle/(y_2y_1-p_{12}y_1y_2-p_{11}y_1^2)$. It is well known $J$ is Koszul and there is a twisted superpotential $\omega_J=y_2y_1-p_{12}y_1y_2-p_{11}y_1^2$ for $J$. Although the expression of the twisted superpotential $\hat{\omega}$ is complicated, it is something like a ``twisted tensor product'' of $\omega_J$ and $\omega_A$ satisfying skew symmetry property, where $\omega_A$ is  a twisted superpotential for $A$. This fact also provides us some perspectives for constructing and classifying high-dimensional Koszul Artin-Schelter regular algebras. 

The paper is organized as follows. In Section 1, we recall some definitions and results including some homological properties of Koszul Artin-Schelter regular algebras and homological determinant for graded algebra homomorphisms from an algebra to $2\times 2$ matrix algebra over itself, and also fix some notations. In Section 2, we construct minimal free resolutions for trivial modules of graded double Ore extensions of Koszul algebras, by introducing a quadruple consisting of four classes of linear maps from skew derivations. In Section 3, we describe Nakayama automorphisms and construct twisted superpotentials for graded double Ore extensions of Koszul Artin-Schelter regular algebras. Section 4 is devoted to show the proofs of results about the Yoneda products (Lemma \ref{lemma: Yoneda product}) and the twisted superpotentials for graded double Ore extensions (Theorem \ref{thm: twisted superpotential}).

\section{Preliminaries}\label{Section preliminaries}
Let $A=\bigoplus_{i\in\mathbb{Z}} A_i$ be a  graded algebra. If $A_i=0$ for all $i<0$ and $\dim A_0=1$, $A$ is called \emph{connected}. Write $\Gr A$ for the category of right graded $A$-modules with morphisms consisting of right $A$-module homomorphisms preserving degrees. For any $M, N\in \Gr A$, the right graded $A$-module $M(n)$ is an $n$-th \emph{shift} of $M$ whose homogeneous space $M(n)_i=M_{n+i}$ for all $i\in\mathbb{Z}$, and  the functor $\uHom$ and its derived functor defined by
$$
\uHom_A(M,N)=\bigoplus\nolimits_{i\in \mathbb{Z}}\Hom_{\Gr A}(M,N(i))\quad\text{and}\quad  \uExt^n_A(M,N)=\bigoplus\nolimits_{i\in \mathbb{Z}}\Ext^n_{\Gr A}(M,N(i)).
$$
Let $L$ be a graded $A$-bimodule and $\mu$ be a graded automorphism of $A$. The graded twisted  $A$-bimodule $L^{\mu}$ is a graded $A$-bimodule with the $A$-bimodule structure $a\cdot x\cdot b=ax\mu(b)$ for any $a,b\in A,x\in L$.

Let $A$ be a connected graded algebra. Write $\varepsilon_A$ for the augmentation from $A$ to $k$. There exists a minimal graded free resolution of the right graded trivial module $k_A$ of $A$,
\begin{equation}\label{general Resolution of k_A}
(\cdots\xlongrightarrow{} P_{d}\xlongrightarrow{\partial_d} \cdots\xlongrightarrow{\partial_3} P_{2}\xlongrightarrow{\partial_2} P_1\xlongrightarrow{\partial_1} P_0\xlongrightarrow{} 0\to \cdots)\ \xlongrightarrow{\varepsilon_A}\ k_A,
\end{equation}
that is, $\Ker \partial_i\subseteq P_{i}A_{\geq 1}$ for any $i\geq 0$. The graded vector space $E(A)=\bigoplus_{i\geq0}\uExt^i_A(k,k)=\bigoplus_{i\geq0}\uHom_A(P_i,k)$ with the Yoneda product ``$\ast$'' is a connected graded algebra, called the \emph{Ext-algebra} of $A$.

\begin{definition}
Let $A=T(V)/(R)$ be a connected graded algebra where $T(V)$ is the tensor algebra over a finite dimensional vector space $V$ and vector space $R\subseteq V^{\otimes 2}$. If each right graded $A$-module $P_i$ in the minimal graded free resolution (\ref{general Resolution of k_A}) of $k_A$ is generated by degree $i$ for any $i\geq0$, we say $A$ is a \emph{Koszul algebra}.
\end{definition}

Let $V$ be a finite dimensional vector space and $A=T(V)/(R)$ be a Koszul algebra. Write $W_0=k$, $W_1=V$ and
$
W_{i}=\bigcap_{0\leq s\leq i-2} V^{\otimes s}\otimes R\otimes V^{\otimes i-s-2}
$  for $i\geq 2$, and  $\pi_A$ for the canonical projection from $T(V)$ to $A$. 
Then the following Koszul complex is  a minimal graded free resolution of $k_A$ via $\varepsilon_A$,
$$
(\cdots\xlongrightarrow{} W_d\otimes A\xlongrightarrow{\partial^A_d}W_{d-1}\otimes A\xlongrightarrow{\partial^A_{d-1}}\cdots\xlongrightarrow{\partial^A_{3}}W_2\otimes A\xlongrightarrow{\partial^A_{2}}W_1\otimes A\xlongrightarrow{\partial^A_{1}}A\xlongrightarrow{}0\to\cdots)\ \xlongrightarrow{\varepsilon_A}\  k_A,
$$
where $\partial^A_{i}=(\id_V^{\otimes i-1}\otimes m_A)(\id_V^{\otimes i-1}\otimes{\pi_A}_{\mid V}\otimes\id_A)=\id_V^{\otimes i-1}\otimes m_A$ and $m_A$ is the multiplication of $A$ for $i\geq 1$. Hence, the Ext-algebra 
$
E(A)=\bigoplus_{i\geq0}\uHom_A(W_i\otimes A,k_A)\cong \bigoplus_{i\geq0} W_i^{*}
$ as vector spaces.

Since the homogeneous space $A_1=V$ as vector spaces, we use $V$ to represent the vector space $V$ or the homogeneous space $A_1$ freely in the sequel. Also, we write $m_A$ shortly for the linear map $m_A({\pi_A}_{\mid V}\otimes \id_A):V\otimes A\to A$.

Write $\tau:V\otimes V\to V\otimes V$ for the flipping map. We adopt the notation in \cite{HVZ} of a sequence of linear endomorphisms of $V^{\otimes d}$ for any $d\geq 2$:
$$
\tau_{d}^0=\id^{\otimes d},\quad 
\tau_d^{i}=(\id^{\otimes i-1}\otimes \tau\otimes\id^{\otimes d-i-1 })\tau_{d}^{i-1},\quad\text{for any }~ 1\leq i\leq d-1.
$$

\begin{definition}
Let $V$ be a finite dimensional vector space and $\sigma$ be a linear automorphism of $V$. If an element $\omega\in V^{\otimes d}$ for some $d\geq 2$ satisfies
$$
\omega=(-1)^{d-1}\tau_{d}^{d-1}(\sigma\otimes\id^{\otimes d-1})(\omega),
$$
then $\omega$ is called a \emph{$\sigma$-twisted superpotential}. 
\end{definition}

\begin{definition}
Let $V$ be a finite dimensional vector space and $\omega\in V^{\otimes d}$ be a $\sigma$-twisted superpotential for some linear automorphism $\sigma$ of $V$ and $d\geq 2$. The \emph{$i$-th derivation quotient algebra} $\mathcal{D}(\omega,i)$ of $\omega$ is
$$
\mathcal{D}(\omega,i)=T(V)/(\partial_{\psi}(\omega),\psi\in (V^*)^{\otimes i}),
$$
where $V^*$ is the linear dual of $V$ and  $
\partial_{\psi}(\omega)=(\id^{\otimes d-i}\otimes \psi)(\omega)
$ is the \emph{partial derivation} of $\omega$.
\end{definition}

\begin{definition}
A connected graded algebra $C$ is \emph{Artin-Schelter regular}  (\emph{AS-regular}, for short) of dimension $d$, if it  has finite global dimension $d$, $\uExt_{C}^{i}(k, C)=0$ for $i\neq d$ and $\dim(\uExt_{C}^{d}(k, C))=1$. 
\end{definition}

By \cite[Lemma 1.2]{RRZ1}, an AS-regular algebra $C$ of dimension $d$ is endowed with the \emph{Nakayama automorphism} $\mu_C$, that is, $\mu_C$ is a  graded algebra automorphism of $C$ such that 
$$\uExt^i_{C^e}(C,C^e)\cong\left\{\begin{array}{ll} 0&i\neq d,\\C^{\mu_C}(l)&i=d,   \end{array}\right. \quad \text{as graded $C$-bimodules},$$
where $C^e=C\otimes C^{op}$ is the enveloping algebra of $C$ for some $l\geq 0$.

Let $E=\bigoplus_{i\in \mathbb{Z}}E^i$ be a finite dimensional graded algebra. We say $E$ is a \emph{graded Frobenius algebra}, if there exists a nondegenerate associative graded bilinear form $\langle -,-\rangle: E\otimes E\to k(d)$ for some $d\in \mathbb{Z}$. In particular, there exists a graded automorphism $\mu_E$ of $E$ such that
$$
\langle \alpha,\beta\rangle=(-1)^{ij}\langle \beta,\mu_E(\alpha)\rangle,
$$
for any $\alpha\in E^i,\beta\in E^j$. We call the automorphism $\mu_E$ is the \emph{Nakayama automorphism} of the graded Frobenius algebra $E$ (see \cite{Sm} for details).

We list some important results for Koszul AS-regular algebras below.

\begin{theorem}\label{thm: properties of Koszul regular algebras}Let $A$ be a Koszul AS-regular algebra of dimension $d$ with the Nakayama automorphism $\mu_A$.
\begin{enumerate}
\item \cite[Proposition 5.10]{Sm} The Ext-algebra $E(A)$ of $A$ is a graded Frobenius algebra.
\item\cite[Proposition 3]{V} Let $\mu_E$ be the Nakayama automorphism of $E(A)$, then
    $
    {\mu_E}_{|E^1(A)}=({\mu_A}_{|A_1})^*,
    $ 
    where $E^1(A)$ is identified with $V^*=A_1^*$.
 \item \cite[Lemma 4.3 and Theorem 4.4(i)]{HVZ}
 Any nonzero element $\omega\in W_d$ is a ${\mu_A}_{\mid V}$-twisted superpotential, and
 $
A\cong \mathcal{D}(\omega,d-2).
$
\end{enumerate}
\end{theorem}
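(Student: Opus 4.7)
The plan is to handle the three parts in sequence, using the Koszul complex identification $E(A) \cong \bigoplus_{i\geq 0} W_i^*$ of vector spaces together with the AS-regular hypothesis as common input. Part (a) is essentially a duality statement, part (b) records how $\mu_A$ is transported to $E(A)$ under Koszul duality, and part (c) extracts the superpotential from the top of the Koszul complex.

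For part (a), I would apply $\uHom_A(-, A)$ to the Koszul resolution of $k_A$ to compute $\uExt^i_A(k, A)$. The AS-regular hypothesis forces this to be concentrated in degree $d$ and one-dimensional, so after dualizing the complex and re-indexing I obtain identifications $W_i \cong W_{d-i}^*$ up to a degree shift. Transporting this through the Yoneda product $E^i(A) \otimes E^{d-i}(A) \to E^d(A) \cong k$ yields the required nondegenerate associative pairing, making $E(A)$ a graded Frobenius algebra.

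For part (b), the Nakayama automorphism $\mu_E$ is characterized by the Frobenius pairing via $\langle \alpha, \beta\rangle = (-1)^{ij}\langle \beta, \mu_E(\alpha)\rangle$, while $\mu_A$ governs the bimodule twist in $\uExt^d_{A^e}(A, A^e)\cong A^{\mu_A}(l)$. I would make the connection explicit by resolving $A$ as a graded $A$-bimodule with the minimal bimodule Koszul resolution, then computing $\uHom_{A^e}(-, A^e)$ termwise. The outermost term involves $W_d$, and the induced $A$-bimodule structure on this top cohomology differs from the standard one by a right action twisted by ${\mu_A}_{\mid V}$. Restricting to degree one and dualizing yields ${\mu_E}_{\mid E^1(A)} = ({\mu_A}_{\mid V})^*$.

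For part (c), I would fix a nonzero $\omega \in W_d$ (a basis, since $\dim W_d = 1$ by AS-regularity) and use the Frobenius pairing to regard $\omega$ as encoding the duality between left and right multiplications by $V^*$ on $E(A)$. Combining this with part (b) to compare the left and right $V^*$-actions up to $\mu_E$ yields the twisted superpotential identity $\omega = (-1)^{d-1}\tau_{d}^{d-1}({\mu_A}_{\mid V} \otimes \id^{\otimes d-1})(\omega)$. The isomorphism $A \cong \mathcal{D}(\omega, d-2)$ then reduces to checking that $R$ is spanned by the partial derivatives $\partial_\psi(\omega)$ for $\psi \in (V^*)^{\otimes d-2}$, which follows directly from the definition $W_d = \bigcap_s V^{\otimes s}\otimes R \otimes V^{\otimes d-2-s}$ and nondegeneracy of the pairing.

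The principal obstacle is part (b): tracking the Nakayama twist across Koszul duality requires careful bookkeeping of signs, degree shifts, and the identification of the dualizing bimodule. Van den Bergh's original approach in \cite{V} uses the $A_{\infty}$-structure on $E(A)$, which is conceptually clean but technically heavy; a direct resolution-theoretic argument is shorter but sign-sensitive, so I would set up the notation for the bimodule Koszul complex carefully before computing, and then deduce (c) as a consequence of (a) and (b) rather than proving it from scratch.
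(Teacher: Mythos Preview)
The paper does not prove this theorem at all: it is stated as a compilation of known results, with each part attributed to a specific reference (\cite[Proposition 5.10]{Sm}, \cite[Proposition 3]{V}, and \cite[Lemma 4.3 and Theorem 4.4(i)]{HVZ}), and the text moves on immediately to the next subsection without further argument. So there is no ``paper's own proof'' to compare your proposal against.

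Your sketch is a reasonable outline of how one would go about proving these facts if asked to do so from scratch, and the overall strategy (dualize the Koszul resolution for (a), track the bimodule twist for (b), read off the superpotential identity from the Frobenius pairing for (c)) is the standard one in the cited literature. But in the context of this paper no proof is expected or supplied; the theorem functions purely as a list of quotable background results.
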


\subsection{Notations}\label{subsec: notations}
Throughout the paper, $k$ is a fixed field. All vector spaces and algebras are over $k$. Unless otherwise stated, the tensor product $\otimes$ means $\otimes_k$. We write $M_{r\times s}(V)$ for $r\times s$ matrix space over some vector space $V$. 

In this paper, we mainly deal with maps from some vector space (or algebra) to the  matrix space (or matrix algebra) over itself. It is necessary to fix some notations to keep us in the setting of such maps.

Let $f=(f_{ij}):V_1\to M_{r\times s}(V_2), g=(g_{ij}): W_1\to M_{s\times t}(W_2)$ and $h=(h_{ij}):V_2\to M_{t\times r}(V_3)$ be linear maps for some vector spaces $V_1,V_2,V_3, W_1$ and $W_2$. We write the ``tensor product'' and ``composition'' of those maps as follows
\begin{align*}
f\boxtimes g=&\left(\sum_{l=1}^s f_{il}\otimes g_{lj}\right): V_1\otimes W_1\to  M_{r\times t}(V_2\otimes W_2),\\
h\bullet f=&\left(\sum_{l=1}^r h_{il}f_{lj}\right):V_1\to M_{t\times s}(V_3).
\end{align*}

In particular, the vector space $\Hom_k(V,M_{r\times r}(V))$ consisting of all linear maps from $V$ to $ M_{r\times r}(V)$ with the multiplication $\bullet$ becomes an algebra. Then there is a left $\Hom_k(V,M_{r\times r}(V))$-module structure on  $M_{r\times s}(V)$, that is for any $f=(f_{ij})\in \Hom_k(V,M_{r\times r}(V))$, there is a linear map defined by
$$
\begin{array}{cclc}
f\cdot-: & M_{r\times s}(V)&\to &M_{r\times s}(V)\\
&v=(v_{ij})&\mapsto&f\cdot v=\left(\sum_{l=1}^rf_{il}(v_{lj})\right).
\end{array}
$$

Besides, we define the transpose and a kind of evaluation map for $f:V_1\to M_{r\times s}(V_2)$ as follows:
\begin{align*}
    &f^T=(f^T_{ji}):V_1\to  M_{s\times r}(V_2), \  \text{where }f^T_{ji}=f_{ij},    \\
&\underline{f}:M_{r\times s}(V_1)\to  V_2, \ (v_{ij})\mapsto  \sum_{i=1}^r\sum_{j=1}^s f_{ij}(v_{ij}).
\end{align*}
For any $X=(x_{ij})\in M_{r\times s}(k)$, it defines a natural map 
$$
\begin{array}{cclc}
    X: &V&\to &M_{r\times s}(V) \\
     ~& v& \mapsto & (x_{ij}v).
\end{array}
$$

For convenience, we write $V^{\oplus r}$ for $M_{r\times 1}(V)$. In keeping with the usual, for a linear map $\varphi:V\to V$, we denote
$$
\varphi^{\oplus r}=\begin{pmatrix}
    \varphi&&\\
    &\ddots&\\
    &&\varphi
\end{pmatrix}\cdot-:V^{\oplus r}\to V^{\oplus r}.
$$

\begin{remark}\label{rem: tensor is not unusual}
\begin{enumerate}
\item It is clear that $\boxtimes$ and $\bullet$ satisfy associative law.

\item In general, 
$$(f'\boxtimes g')\bullet(f\boxtimes g )\neq (f'\bullet f)\boxtimes (g'\bullet g),$$ 
for some linear maps $f:V_1\to M_{r\times r}(V_2),g:W_1\to M_{r\times t}(W_2)$, $f':V_2\to M_{s\times r}(V_3)$, $g':W_2\to M_{r \times r}(W_3)$. In particular, if $g'$ is a diagonal type of
$$
\begin{pmatrix}
    \phi & & \\
    & \ddots &\\
    &&\phi
\end{pmatrix}:W_2\to M_{r\times r}(W_3).
$$
Then for any linear maps $f:V_1\to M_{r\times r}(V_2),g:W_1\to M_{r\times t}(W_2)$, $f':V_2\to M_{s\times r}(V_3)$,
$$
(f'\boxtimes g')\bullet(f\boxtimes g )= (f'\bullet f)\boxtimes (g'\bullet g).
$$

\end{enumerate}
    
\end{remark}

\begin{definition}\cite[Definition 1.8]{ZZ1}\label{def: inverse}
Let $V$ be a vector space (resp. $A$ be a connected graded algebra), and $\varsigma=(\varsigma_{ij})$ and $\varphi=(\varphi_{ij})$ be two linear maps (resp. graded algebra homomorphisms) from $V$ to $M_{2\times2}(V)$ (resp. $A$ to $M_{2\times2}(A)$). If
	$$
	\varsigma^T\bullet \varphi=\varphi\bullet\varsigma^T=\begin{pmatrix}
	    \id &\\
        &\id
	\end{pmatrix},
	$$
we say $\varsigma$ is \emph{invertible} and $\varphi$ is \emph{T-invertible}, $\varphi$ is an \emph{inverse} of $\varsigma$ and $\varsigma^T$ is a \emph{T-inverse} of $\varphi$.

In this case, it is not hard to obtain that the inverse of $\varsigma$ and $T$-inverse of $\varphi$ are both unique. Write $\varphi=\varsigma^{-1}$ and $\varphi^{-T}=\varsigma^T$.
\end{definition}

\subsection{Homological determinant}\label{subsection: homological determinant}
Let $A=T(V)/(R)$ be a Koszul AS-regular algebra of dimension $d$, where $V$ is a vector space with a basis $\{x_1,x_2,\cdots,x_n\}$. It is well-known (see \cite[Proposition 3.1.4]{SZ} for example) 
$$
\dim W_i=
\left\{
\begin{array}{ll}
\dim W_{d-i}, & \text{if } 0\leq i\leq d,\\
0,            & \text{if } i>d.
\end{array}
\right.
$$
Write $\omega$ for a basis of $W_d$. 

Let $\varsigma:A\to M_{2\times 2}(A)$ be an invertible graded algebra homomorphism, which plays an important role in the discussion of graded double Ore extensions.  The homological determinant $\hdet \varsigma$ of $\varsigma$ has been introduced in \cite[Definition 2.5]{ZVZ} (also see \cite{SZL} for a general case). We recall the way used in \cite{SZL} to define $\hdet \varsigma$.

It is not hard to check that the right graded $A$-module $\left((W_i\otimes A)^{\oplus 2}\right)^{\varsigma}$ is also free, where the right graded $A$-module structure is given by
$$
\begin{pmatrix}
    \alpha_1\\
    \alpha_2
\end{pmatrix}\star a:=\left((\alpha_1,\alpha_2)\varsigma(a)\right)^T,
$$
for any $\alpha_1,\alpha_2\in W_i\otimes A$, $a\in A$ and $i=1,\cdots,d$. In fact, right graded $A$-module homomorphisms
$$
A ^{\oplus  2}
{\xleftrightharpoons[\varsigma^T\cdot-]{\varsigma^{-1}\cdot-}}\ \left( A ^{\oplus 2} \right)^\varsigma
$$
are inverse of each other.

Let $\sigma=(\sigma_{ij})=\varsigma_{\mid V}:V\to M_{2\times 2}(V)$. By Comparison Theorem, we have a commutative diagram between two free resolutions of $(k_A)^{\oplus 2}$ as follows. 
{\footnotesize
$$
\xymatrix@C=15px{
(\cdots\ar[r]&0\ar[r] &(W_d\otimes A)^{\oplus 2}\ar[r]^{(\partial^A_d)^{\oplus 2}}  \ar[d]^{\widetilde{\sigma}_d}&(W_{d-1}\otimes A)^{\oplus 2}\ar[r]^(0.7){(\partial^A_{d-1})^{\oplus 2}}\ar[d]^{\widetilde{\sigma}_{d-1}}  &\cdots\ar[r]^(0.35){\partial_{2}} &
(W_1\otimes A)^{\oplus 2}\ar[r]^(0.6){(\partial^A_1)^{\oplus 2}}\ar[d]^{\widetilde{\sigma}_1}
&A^{\oplus 2}\ar[r]\ar[d]^{\widetilde{\sigma}_0}&0\ar[r]&\cdots)\ar[r]^{(\varepsilon_A)^{\oplus 2}}&
\\
(\cdots\ar[r]&0\ar[r] & \left((W_d\otimes A)^{\oplus 2}\right)^{\varsigma}\ar[r]^{(\partial^A_d)^{\oplus 2}}&   \left((W_{d-1}\otimes A)^{\oplus 2}\right)^{\varsigma}\ar[r]^(0.75){(\partial^A_{d-1})^{\oplus 2}}&\cdots\ar[r]^(0.3){{{\partial^{\oplus 2}_{2}}}} &\left((W_1\otimes A)^{\oplus 2}\right)^{\varsigma}\ar[r]^(0.6){(\partial^A_1)^{\oplus 2}}
&\left(A^{\oplus 2}\right)^{\varsigma}\ar[r]&0\ar[r]&\cdots)\ar[r]^(0.55){(\varepsilon_A)^{\oplus 2}}&
}
\xymatrix{
\hspace{-3.5mm}(k_A)^{\oplus 2}\ar@{=}@<-3.5mm>[d]\\
\hspace{-3.5mm}(k_A)^{\oplus 2},
}
$$
}

\noindent where
$
\widetilde{\sigma}_i=(\sigma^{\boxtimes i}\boxtimes \varsigma)^T\cdot-
$ for all $i=1,\cdots,d$.  It provides us a graded algebra homomorphism $E(\widetilde{\sigma}):E(A)^{\oplus 2}\to E(A)^{\oplus 2}$. In particular,
$$
({E^d(A)})^{\oplus 2}\cong \uHom_A((W_d\otimes A)^{\oplus 2},k)\cong  \uHom_A\left(\left((W_d\otimes A)^{\oplus 2}\right)^{\varsigma},k\right)\cong (W_d^*)^{\oplus 2},
$$
as vector spaces, and we write $e_1=(\omega^*,0)$ and $e_2=(0,\omega^*)$ for a basis of $\left(E^d(A)\right)^{\oplus 2}$, where $\omega^*$ is a dual basis. Then there exists a matrix $\widetilde{H}\in M_{2\times 2}(k)$ such that
$$
E(\widetilde{\sigma})(e_1,e_2)=(e_1,e_2)\widetilde{H}.
$$
The matrix $\widetilde{H}$ is the \emph{homological determinant} of $\varsigma$, denoted by $\hdet \varsigma$.

On the other hand, there exists a matrix $H=(h_{ij})\in M_{2\times 2}(k)$ such that
$$
\sigma^{\boxtimes d}(\omega)=H\omega,
$$
and 
$$
E(\widetilde{\sigma})(e_1,e_2)=(e_1,e_2)H.
$$
In fact, for any $i=1,2$ and $c_1,c_2\in k$, one obtains that
$$
\left(E(\widetilde{\sigma})e_i\right)
\begin{pmatrix}
    \omega\otimes c_1\\
    \omega\otimes c_2
\end{pmatrix}=
e_i\widetilde{\sigma}_d
\begin{pmatrix}
    \omega\otimes c_1\\
    \omega\otimes c_2
\end{pmatrix}=
e_i
\begin{pmatrix}
    h_{11}\omega\otimes c_1+h_{21}\omega\otimes c_2\\
h_{12}\omega\otimes c_1+h_{22}\omega\otimes c_2
\end{pmatrix}=h_{1i}c_1+h_{2i}c_2=\left(h_{1i}e_1+h_{2i}e_2\right)\begin{pmatrix}
    \omega\otimes c_1\\
    \omega\otimes c_2
\end{pmatrix}.
$$
In conclusion, we obtain the following result, which is a generalization of \cite[Theorem 1.2]{MS}.
\begin{theorem}\label{thm: hdet is sigma d}
    Let $A=T(V)/(R)$ be a Koszul AS-regular algebra of dimension $d$ and $\varsigma:A\to M_{2\times 2}(A)$ be an invertible graded algebra homomorphism. Let $\sigma=\varsigma_{\mid V}$ and $\omega$ be a basis of $W_d$. Then
    $$
    \sigma^{\boxtimes d}(\omega)=(\hdet\varsigma)\omega.
    $$
\end{theorem}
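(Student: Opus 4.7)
The plan is to directly identify the scalar matrix arising from the action of $\sigma^{\boxtimes d}$ on the one-dimensional top space $W_d$ with the homological determinant $\widetilde{H}=\hdet\varsigma$ defined via the induced map on Ext-algebras. The key observation is that both matrices arise from the very same chain map $\widetilde{\sigma}_\bullet$, just read off in different ways: $H$ appears as the scalar part of $\widetilde{\sigma}_d$ in the top slot of the Koszul resolution, while $\widetilde{H}$ appears as the matrix of $E(\widetilde{\sigma})$ on the dual basis $\{e_1,e_2\}$ of $E^d(A)^{\oplus 2}\cong(W_d^*)^{\oplus 2}$. So the entire proof consists of comparing these two readings.

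First I would check that $\sigma^{\boxtimes d}$ restricts to a map $W_d\to M_{2\times 2}(W_d)$, which is what lets us speak of the scalar $H\in M_{2\times 2}(k)$ satisfying $\sigma^{\boxtimes d}(\omega)=H\omega$. This is essentially automatic from the fact that $\widetilde{\sigma}_\bullet$ is a chain map between the two resolutions of $(k_A)^{\oplus 2}$: the top term of the upper complex is $(W_d\otimes A)^{\oplus 2}$, so $\widetilde{\sigma}_d=(\sigma^{\boxtimes d}\boxtimes \varsigma)^T\cdot -$ must land inside $((W_d\otimes A)^{\oplus 2})^{\varsigma}$. The chain-map property itself reduces, square by square, to the fact that $\varsigma$ is a graded algebra homomorphism, using the associativity of $\boxtimes$ and $\bullet$ recorded in Remark \ref{rem: tensor is not unusual}. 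Since $\dim W_d=1$ with basis $\omega$, this produces the desired $H$.

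Second, I would evaluate $E(\widetilde{\sigma})(e_i)$ directly on a test element $(\omega\otimes c_1,\omega\otimes c_2)^T$ of $((W_d\otimes A)^{\oplus 2})^{\varsigma}$, plugging in $\widetilde{\sigma}_d=(\sigma^{\boxtimes d}\boxtimes \varsigma)^T\cdot-$ and $\sigma^{\boxtimes d}(\omega)=H\omega$. The result is the scalar $h_{1i}c_1+h_{2i}c_2$, which forces $E(\widetilde{\sigma})(e_i)=h_{1i}e_1+h_{2i}e_2$, so $E(\widetilde{\sigma})(e_1,e_2)=(e_1,e_2)H$. Comparing with the defining relation $E(\widetilde{\sigma})(e_1,e_2)=(e_1,e_2)\widetilde{H}$ of $\hdet\varsigma$ yields $H=\hdet\varsigma$, and the theorem follows. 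The main point requiring care is purely notational: one must carefully distinguish $\sigma^{\boxtimes d}$ as a map $V^{\otimes d}\to M_{2\times 2}(V^{\otimes d})$ from the scalar multiplication by $H\in M_{2\times 2}(k)$ on $\omega$, and be vigilant when $\boxtimes$ meets $\bullet$, since these do not freely distribute in general. Once the bookkeeping is in place, the argument is the short two-line computation already indicated in the paragraph preceding the statement.
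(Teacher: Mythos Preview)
Your proposal is correct and follows essentially the same approach as the paper: the paper's proof is precisely the short computation in the paragraph immediately preceding the theorem statement, which introduces $H$ via $\sigma^{\boxtimes d}(\omega)=H\omega$ and then evaluates $E(\widetilde{\sigma})(e_i)$ on a generic element $(\omega\otimes c_1,\omega\otimes c_2)^T$ to conclude $E(\widetilde{\sigma})(e_1,e_2)=(e_1,e_2)H$, whence $H=\widetilde{H}=\hdet\varsigma$. Your additional remark that $\sigma^{\boxtimes d}(W_d)\subseteq M_{2\times 2}(W_d)$ follows from the chain-map property of $\widetilde{\sigma}_\bullet$ is a useful clarification that the paper leaves implicit.
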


\section{Graded double Ore extensions of Koszul algebras}

In this section, we discuss graded double Ore extensions of Koszul algebras. There are two goals, one is to construct a quadruple consisting of four classes of linear maps from skew derivations in graded double Ore extensions, and the other one is to construct a minimal free resolution for the trivial module of a graded double Ore extension of a Koszul algebra. We recall some basic facts about graded double Ore extensions first (see \cite{ZZ1} for more details).

\begin{definition}\cite[Definition 1.3]{ZZ1}
We say $B$ is a \emph{right double Ore extension} of an algebra $A$ if $B$ is generated by $A$ and two new variables $y_1,y_2$  satisfying
\begin{enumerate}
\item a relation $y_2y_1=p_{12}y_1y_2+p_{11}y^2_1+\kappa_1y_1+\kappa_2y_2+\kappa_0$ for some $p_{12},p_{11}\in k$ and $\kappa_0,\kappa_1,\kappa_2\in A$,

\item $B$ is a free left $A$-module with a basis $\{y_1^iy_2^j\mid i,j\geq0\}$,

\item $y_1A+y_2A\subseteq Ay_1+A y_2+A$.
\end{enumerate}
\end{definition}
Similarly, one can define a \emph{left  double Ore extension}. An algebra $B$ is called a \emph{double Ore extension} of $A$ if it is a left and a right double Ore extension of $A$ with the same generating set $\{y_1, y_2\}.$

The condition (c) of the definition of right double Ore extension implies the existence of an algebra homomorphism $\varsigma=(\varsigma_{ij}):A\to M_{2\times 2}(A)$ and a $\varsigma$-derivation $\nu=(\nu_i):A\to A^{\oplus2}$ such that 
$$
y_ia=\varsigma_{i1}(a)y_1+\varsigma_{i2}(a)y_2+\nu_i(a),
$$
for any $a\in A$ and $i=1,2$. We write $A_{P}[y_1,y_2;\varsigma,\nu,\kappa]$ for a right double Ore extension where $P=\{p_{12},p_{11}\}$ and $\kappa=\{\kappa_0,\kappa_1,\kappa_2\}$, and $A_{P}[y_1,y_2;\varsigma,\nu]$ for convenience if $\kappa=\{0,0,0\}$. In the sequel, write 
$$
\mathbb{J}=\begin{pmatrix}
-p_{11}&-p_{12}\\1&0
 \end{pmatrix}.
$$

We can define an algebra $J=k\langle Y\rangle /(r_J)$, where $r_J=Y^T\mathbb{J}Y$, 
$Y=\left(y_1,y_2\right)^T$. In fact, $A_{P}[y_1,y_2;\varsigma,\nu]$ is a twisted tensor product of $A$ and $J$. In this case, there is a surjective graded algebra homomorphism
\begin{equation}\label{eq: algebra homomorphism B to J}
 \begin{array}{cclc}
    p_J:& B &\to & J  \\
     & \sum_{i,j}a_{ij}y_1^iy_2^j&\mapsto&\sum_{i,j}\varepsilon_A(a_{ij})y_1^iy_2^j,
\end{array}   
\end{equation}
and so $J$ is a right graded $B$-module. It is clear that $\Ker p_J=A_{\geq 1} B$, so  $J\cong A_{\geq 1} B$ as right graded $B$-modules. However, there is no such natural graded algebra homomorphism from $B$ to $A$ since $\nu$ is nontrivial.

Different to the definition of homological determinant for the algebra homomorphism $\varsigma$, there is another determinant defined for $\varsigma$ in \cite{ZZ1}.

\begin{definition}\label{def: determinant of varsigma}
Let $B=A_P[y_1,y_2;\varsigma,\nu,\kappa]$ be a right double Ore extension of an algebra $A$. Then the endomorphism of $A$
$$\det\varsigma=\varsigma_{22}\varsigma_{11}-p_{12}\varsigma_{12}\varsigma_{21}-p_{11}\varsigma_{12}\varsigma_{11}$$
is called the \emph{determinant} of $\varsigma$.
\end{definition}

For a right double Ore extension $B=A_P[y_1,y_2;\varsigma,\nu]$,  $\varsigma$ and $\nu$ should satisfy some conditions (see \cite[(R3.1)--(R3.6)]{ZZ1}). We list them below.

\begin{proposition}\label{prop: conditions for varsigma and nu}
Let $B=A_P[y_1,y_2;\varsigma,\nu]$ be a right double Ore extension of an algebra $A$. Then
\begin{align*}
\varsigma_{21}\varsigma_{11}-p_{12}\varsigma_{11}\varsigma_{21}-p_{11}\varsigma_{11}\varsigma_{11}+p_{11}\det\varsigma&=0,\\
\varsigma_{21}\varsigma_{12}-p_{12}\varsigma_{11}\varsigma_{22}-p_{11}\varsigma_{11}\varsigma_{12}+p_{12}\det\varsigma&=0,\\
\varsigma_{22}\varsigma_{12}-p_{12}\varsigma_{12}\varsigma_{22}-p_{11}\varsigma_{12}\varsigma_{12}&=0,\\
\nu_2\varsigma_{11}+\varsigma_{21}\nu_1-p_{11}\left(
\nu_1\varsigma_{11}+\varsigma_{11}\nu_1
\right)-p_{12}\left(
\nu_1\varsigma_{21}+\varsigma_{11}\nu_2
\right)&=0,\\
\nu_2\varsigma_{12}+\varsigma_{22}\nu_1-p_{11}\left(
\nu_1\varsigma_{12}+\varsigma_{12}\nu_1
\right)-p_{12}\left(
\nu_1\varsigma_{22}+\varsigma_{12}\nu_2
\right)&=0,\\
\nu_2\nu_1-p_{12}\nu_1\nu_2-p_{11}\nu_1\nu_1&=0.
\end{align*}

\begin{remark} The results in Proposition \ref{prop: conditions for varsigma and nu}  can be written in the matrix form as follows.
    \begin{align}
\varsigma^T\bullet\mathbb{J}\bullet \varsigma-\mathbb{J}\bullet \begin{pmatrix}
    \det\varsigma&\\
    &\det \varsigma
\end{pmatrix}=\varsigma^T\bullet\mathbb{J}\bullet \varsigma- \begin{pmatrix}
    \det\varsigma&\\
    &\det \varsigma
\end{pmatrix}\bullet\mathbb{J}&=0,\label{eq: Theta}\\
\nu^T\bullet\mathbb{J}\bullet\varsigma+(\varsigma^T\bullet\mathbb{J}\bullet\nu)^T&=0,\label{eq: Gamma}\\
\underline{\nu}(\mathbb{J}\bullet\nu)=\nu^T\bullet \mathbb{J}\bullet\nu&=0.\label{eq: Nu}
\end{align}
\end{remark}

\end{proposition}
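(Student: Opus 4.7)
The plan is to reproduce the six scalar identities by exploiting associativity in $B$, and then to verify the matrix repackagings \eqref{eq: Theta}--\eqref{eq: Nu} by a direct bookkeeping computation.

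First I would fix an arbitrary $a\in A$ and expand $y_2(y_1a)$ and $(y_2y_1)a$ separately, using the basic commutation law $y_ia=\varsigma_{i1}(a)y_1+\varsigma_{i2}(a)y_2+\nu_i(a)$ together with the defining relation $y_2y_1=p_{12}y_1y_2+p_{11}y_1^2$ (since $\kappa=0$). For $y_2(y_1a)$ I push $y_2$ past each coefficient $\varsigma_{1j}(a)$ and $\nu_1(a)$, then pull every occurrence of $y_2y_1$ into normal form $p_{12}y_1y_2+p_{11}y_1^2$; for $(y_2y_1)a=p_{12}y_1y_2a+p_{11}y_1^2a$ I push each $y_i$ past $a$ twice. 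Because $B$ is a free left $A$-module on $\{y_1^iy_2^j\}$, I can then equate coefficients of $y_1^2,\ y_1y_2,\ y_2^2,\ y_1,\ y_2$ and $1$. The $y_2^2$-coefficients yield the third identity (which is independent of $\det\varsigma$); the $y_1^2$- and $y_1y_2$-coefficients, after absorbing the terms arising from $\varsigma_{22}\varsigma_{11}$ into $\det\varsigma$, give the first and second identities; and the $y_1$-, $y_2$- and constant coefficients yield the three identities involving $\nu$.

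Next I would verify the matrix form \eqref{eq: Theta}. A direct entry-wise computation of $\varsigma^T\bullet\mathbb{J}\bullet\varsigma$, using the definition $\mathbb{J}_{ij}(a)=j_{ij}a$ and the $\bullet$-convention $(h\bullet f)_{ij}=\sum_lh_{il}f_{lj}$, produces the matrix
\begin{equation*}
\begin{pmatrix}
\varsigma_{21}\varsigma_{11}-p_{12}\varsigma_{11}\varsigma_{21}-p_{11}\varsigma_{11}\varsigma_{11} & \varsigma_{21}\varsigma_{12}-p_{12}\varsigma_{11}\varsigma_{22}-p_{11}\varsigma_{11}\varsigma_{12}\\
\varsigma_{22}\varsigma_{11}-p_{12}\varsigma_{12}\varsigma_{21}-p_{11}\varsigma_{12}\varsigma_{11} & \varsigma_{22}\varsigma_{12}-p_{12}\varsigma_{12}\varsigma_{22}-p_{11}\varsigma_{12}\varsigma_{12}
\end{pmatrix}.
\end{equation*}
Its $(2,1)$-entry is precisely $\det\varsigma$ by definition, so reading off the $(1,1)$-, $(1,2)$- and $(2,2)$-entries against $\mathbb{J}\bullet\mathrm{diag}(\det\varsigma,\det\varsigma)$ recovers the three identities on $\varsigma$ and gives the first equality in \eqref{eq: Theta}. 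The second equality follows because the diagonal matrix $\mathrm{diag}(\det\varsigma,\det\varsigma)$ commutes with any scalar matrix acting entry-wise. The identities \eqref{eq: Gamma} and \eqref{eq: Nu} are obtained in exactly the same fashion: expanding $\nu^T\bullet\mathbb{J}\bullet\varsigma$ and $(\varsigma^T\bullet\mathbb{J}\bullet\nu)^T$ entry-by-entry recovers the fourth and fifth scalar identities as the two components of a row vanishing, and expanding $\nu^T\bullet\mathbb{J}\bullet\nu$ recovers the sixth.

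The main obstacle is purely bookkeeping: one has to normal-order several dozen monomials in the right $\varsigma$/$\nu$ order and carefully track which side of the $\bullet$-product each map sits on (recall that in the paper's convention $f_{il}g_{lj}$ means $f_{il}\circ g_{lj}$, i.e., $g$ first). A safeguard is to check the totally commutative subcase $A=k[t]$ with trivial $\nu$, where the six identities reduce to two-variable polynomial identities easy to verify by inspection; the nontrivial bookkeeping then only concerns the order of $\varsigma_{ij}$'s and $\nu_i$'s, which is dictated unambiguously by the above $y_2(y_1a)=(y_2y_1)a$ argument. No subtler ingredient is needed.
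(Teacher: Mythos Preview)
Your proposal is correct and is precisely the standard derivation: the paper itself does not supply a proof but simply cites \cite[(R3.1)--(R3.6)]{ZZ1}, where these identities are obtained exactly by imposing associativity $y_2(y_1a)=(y_2y_1)a$ and comparing coefficients in the free left $A$-basis $\{y_1^iy_2^j\}$. Your entry-wise verification of the matrix repackagings \eqref{eq: Theta}--\eqref{eq: Nu} is also correct under the paper's $\bullet$-convention, so nothing further is needed.
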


\begin{proposition}\label{prop: invertible means T-invertible}
Let $B=A_P[y_1,y_2;\varsigma,\nu]$ be a right double Ore extension of an algebra $A$ with $p_{12}\neq0$. If $\varsigma$  is invertible, then 
\begin{enumerate}
    \item $\det\varsigma$ is an automorphism of $A$;
    \item $\varsigma$ is $T$-invertible and
    $$
    \varsigma^{-T}=\mathbb{J}^{-1}\bullet\begin{pmatrix}
        (\det\varsigma)^{-1} & \\
        & (\det\varsigma)^{-1}
    \end{pmatrix}\bullet\varsigma^T\bullet \mathbb{J}.
    $$
\end{enumerate}
\end{proposition}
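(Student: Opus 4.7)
The approach is to exploit equation (\ref{eq: Theta}) of Proposition \ref{prop: conditions for varsigma and nu}, which reads
$$\varsigma^T \bullet \mathbb{J} \bullet \varsigma \;=\; \mathbb{J} \bullet D \;=\; D \bullet \mathbb{J}, \qquad D := \begin{pmatrix}\det\varsigma & 0 \\ 0 & \det\varsigma\end{pmatrix}.$$
The hypothesis $p_{12}\ne 0$ makes $\mathbb{J}$ an invertible constant matrix over $k$ (its ordinary determinant being $p_{12}$), and the key technical observation, which is used twice, is that any scalar-identity matrix of linear endomorphisms of $A$ commutes with every constant matrix under $\bullet$: its two diagonal entries coincide, and scalars from $k$ pass through linear maps.

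For part (1), I would rearrange the displayed identity as $D = \varsigma^T \bullet \mathbb{J} \bullet \varsigma \bullet \mathbb{J}^{-1}$. The invertibility of $\varsigma$ in the sense of Definition \ref{def: inverse} gives a two-sided $\bullet$-inverse $\varphi = \varsigma^{-1}$ of $\varsigma^T$ in the matrix ring $M_{2\times 2}(\Hom_k(A,A))$; transposing the defining equations (transpose is an anti-automorphism of this matrix ring) yields $\varsigma \bullet \varphi^T = \varphi^T \bullet \varsigma = \id$, so $\varsigma$ itself is $\bullet$-invertible. Combined with invertibility of $\mathbb{J}$, the right-hand side of the rearranged identity is a product of invertible elements, hence $D$ is invertible in $M_{2\times 2}(\Hom_k(A,A))$. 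A direct inspection of any two-sided $\bullet$-inverse $M$ of $D$ forces $M$ to be diagonal with both entries equal to a common two-sided inverse of $\det\varsigma$ as a linear endomorphism of $A$. Since $\det\varsigma$ is already a graded algebra endomorphism of $A$ (a general fact about double Ore extensions, \cite{ZZ1}), it is thus a graded algebra automorphism, and we may write $(\det\varsigma)^{-1}$.

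For part (2), the natural candidate for $\varsigma^{-T}$ is
$$\chi \;:=\; \mathbb{J}^{-1} \bullet D^{-1} \bullet \varsigma^T \bullet \mathbb{J}, \qquad D^{-1} = \begin{pmatrix}(\det\varsigma)^{-1} & 0 \\ 0 & (\det\varsigma)^{-1}\end{pmatrix}.$$
Using $\varsigma^T \bullet \mathbb{J} \bullet \varsigma = \mathbb{J} \bullet D$ directly, I would compute
$$\chi \bullet \varsigma \;=\; \mathbb{J}^{-1} \bullet D^{-1} \bullet (\varsigma^T \bullet \mathbb{J} \bullet \varsigma) \;=\; \mathbb{J}^{-1} \bullet D^{-1} \bullet \mathbb{J} \bullet D \;=\; \mathbb{J}^{-1} \bullet \mathbb{J} \bullet D^{-1} \bullet D \;=\; \id,$$
where the third equality uses the commutation $D^{-1} \bullet \mathbb{J} = \mathbb{J} \bullet D^{-1}$ noted above. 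Thus $\chi$ is a left $\bullet$-inverse of $\varsigma$. Since part (1) already produced $\varphi^T$ as a two-sided $\bullet$-inverse of $\varsigma$, uniqueness of inverses in the associative ring $M_{2\times 2}(\Hom_k(A,A))$ forces $\chi = \varphi^T$, which shows that $\varsigma$ is $T$-invertible and that $\varsigma^{-T} = \chi$ matches the stated formula.

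The main obstacle is the bookkeeping inside the non-commutative ring $M_{2\times 2}(\Hom_k(A,A))$: the operations $\bullet$, transpose, and inversion interact in delicate ways, and the entire calculation pivots on isolating the single commutation $D^{-1} \bullet \mathbb{J} = \mathbb{J} \bullet D^{-1}$. Once this is recognized, the remaining steps are routine algebraic manipulations.
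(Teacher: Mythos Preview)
Your proof is correct and follows essentially the same route as the paper: both hinge on equation \eqref{eq: Theta} together with the invertibility of $\mathbb{J}$ when $p_{12}\neq 0$, and the commutation of the scalar-identity block $D^{\pm 1}$ with constant matrices. The paper is terser---it cites \cite[Proposition~2.1]{ZZ1} outright for part~(a) and, for part~(b), rewrites \eqref{eq: Theta} as $\varsigma=\mathbb{J}^{-1}\bullet\varsigma^{-1}\bullet D\bullet\mathbb{J}$ and then inverts---whereas you supply the linear-algebra details for part~(a) and verify the candidate $\chi$ directly, but these are cosmetic differences rather than a distinct strategy.
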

\begin{proof}
The first result is by \cite[Proposition 2.1]{ZZ1}. By  (\ref{eq: Theta}), one obtains that 
\begin{align*}
\varsigma=\mathbb{J}^{-1}\bullet\varsigma^{-1}\bullet\begin{pmatrix}
        \det\varsigma & \\
        &\det\varsigma
    \end{pmatrix}\bullet \mathbb{J}.
\end{align*}
The last result follows.
\end{proof}

Similarly, we can define the (right or left) graded double Ore extension in the categories of connected graded algebras. That is, $B=A_P[y_1,y_2;\varsigma,\nu]$ is a (right or left) double Ore extension of a connected graded algebra $A$ where $\deg y_1=\deg y_2=1$, $\varsigma:A\to M_{2\times 2}(A)$ is a graded algebra homomorphism and $\nu:A\to A^{\oplus 2}$ is a degree one $\varsigma$-derivation.

\begin{proposition}\cite[Lemma 1.9, Proposition 1.13 and Theorem 0.2]{ZZ1}.
\label{prop: invertible iff double ore extension}
Let $B=A_P[y_1,y_2;\varsigma,\nu]$ be a right graded double Ore extension of a connected graded algebra $A$ with $p_{12}\neq0$. Then 
\begin{enumerate}
    \item $B$ is a graded double Ore extension of $A$ if and only if $\varsigma$ is invertible.
\item $B$ is an AS-regular algebra of dimension $d+2$ if $A$ is an AS-regular algebra of dimension of $d$ and $\varsigma$ is invertible.
\end{enumerate}
\end{proposition}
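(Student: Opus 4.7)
The plan is to prove the two parts separately. Part (1) is a PBW-symmetry argument: writing the right-sided relations in matrix form and ``inverting'' this action produces left-sided relations, and the inversion succeeds precisely when $\varsigma$ is invertible in the sense of Definition \ref{def: inverse}. Part (2) is a homological computation using a length-two iterated mapping cone to build a minimal free resolution of $k_B$ and then dualizing to count Ext groups.

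For (1), assume $\varsigma$ is invertible and let $\phi = \varsigma^{-1}$ satisfy $\varsigma^T \bullet \phi = \phi \bullet \varsigma^T = I$. I would define $\varsigma'_{ji} := \phi_{ji}$ and $\nu'_i(\alpha) := -\sum_j \nu_j(\phi_{ji}(\alpha))$ and verify the left-sided relation $\alpha y_i = \sum_j y_j \varsigma'_{ji}(\alpha) + \nu'_i(\alpha)$ by applying the right relation $y_j\beta = \sum_k \varsigma_{jk}(\beta)y_k + \nu_j(\beta)$ with $\beta = \phi_{ji}(\alpha)$ and summing over $j$: the coefficient of $y_k$ collapses to $(\varsigma^T\bullet\phi)_{ki}(\alpha) = \delta_{ki}\alpha$, so only $\alpha y_i$ survives, leaving $\nu'_i$ as the error. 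Routine checks then upgrade $\varsigma'$ to a graded algebra homomorphism and $\nu'$ to a graded $\varsigma'$-derivation, giving a left graded double Ore extension structure. For the converse, if $B$ is also a left graded double Ore extension with data $(\varsigma',\nu')$, substituting the right relations into the left one and reading off coefficients in the left-$A$-free PBW basis $\{y_1^sy_2^t\}_{s,t\ge 0}$ of $B$ forces $\varsigma^T\bullet\varsigma' = \varsigma' \bullet \varsigma^T = I$, exhibiting invertibility of $\varsigma$.

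For (2), assume further that $A$ is AS-regular of dimension $d$. The plan is to build a minimal graded free resolution of $k_B$ over $B$ of length $d+2$ by a length-two iterated mapping cone on the Koszul-type minimal resolution of $k_A$. Informally, adjoin $y_1$ by lifting left multiplication by $y_1$ to a degree $(+1)$ chain map of the $A$-resolution; this lift fails strictly because $\nu \neq 0$, and one compensates with auxiliary linear maps $\delta_{i,r}$ and $\upsilon_{i,r}$ that encode how the $\varsigma$-derivation acts on Koszul cocycles. Mapping cone on this lift produces a resolution over $A[y_1]$; repeating the construction for $y_2$ yields the sought resolution over $B$ of length $d+2$, so $\gldim B \leq d+2$. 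Applying $\uHom_B(-,B)$ and using AS-regularity of $A$ (i.e., $\uExt^i_A(k,A)$ is one-dimensional in degree $d$ and vanishes otherwise), the two cone stages each push the location of the top Ext up by one and preserve one-dimensionality, giving $\uExt^i_B(k,B) = 0$ for $i \neq d+2$ and one-dimensional for $i = d+2$; combined with the finite global dimension bound this is AS-regularity of $B$ in dimension $d+2$.

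The main obstacle is the construction in (2): the lifts of $y_1$ and $y_2$ to chain maps on the Koszul complex of $A$ are not literal multiplications, and producing them requires a coherent quadruple $(\{\delta_{i,r}\},\{\delta_{i,l}\},\{\upsilon_{i,r}\},\{\upsilon_{i,l}\})$ whose existence, together with the fact that the resulting double cone is actually a complex, relies on the matrix identities (\ref{eq: Theta}), (\ref{eq: Gamma}) and (\ref{eq: Nu}) of Proposition \ref{prop: conditions for varsigma and nu}. Sorting out this bookkeeping is what separates the present $\nu \neq 0$ case from the trimmed case handled in \cite{ZVZ}, and it is exactly the content of Section 2 of the paper.
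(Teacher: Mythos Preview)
The paper does not prove this proposition at all; it is quoted from \cite{ZZ1} (Lemma~1.9, Proposition~1.13 and Theorem~0.2) and stated without argument. Your sketch for part~(1) is essentially the argument given in \cite{ZZ1} and is correct.

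Your plan for part~(2) contains a genuine gap. You propose to ``adjoin $y_1$'' first, take a mapping cone to obtain a resolution over $A[y_1]$, and then repeat for $y_2$. This presupposes that $B$ factors as an iterated Ore extension $(A[y_1;\ldots])[y_2;\ldots]$, which is false in general: the defining relation $y_1 a = \varsigma_{11}(a)y_1 + \varsigma_{12}(a)y_2 + \nu_1(a)$ already involves $y_2$, so the subalgebra of $B$ generated by $A$ and $y_1$ is not a free left $A$-module on $\{y_1^s\}$ unless $\varsigma_{12}=0$. Producing algebras that are \emph{not} iterated Ore extensions is precisely the point of the double Ore construction in \cite{ZZ1,ZZ2}, so your intermediate object ``$A[y_1]$'' does not exist as stated.

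For comparison, the paper's Section~2 construction (Theorem~\ref{thm: minimial free resolution}) also uses a length-two iterated mapping cone, but it does \emph{not} factor through a one-variable extension: the first cone is along a chain map $f_\centerdot:Q_\centerdot(-2)\to Q_\centerdot(-1)^{\oplus 2}$ lifting $\mathbb{J}\bullet\lambda_Y$ (so both $y_1$ and $y_2$ enter simultaneously), and the second is along $(h_\centerdot[-1],g_\centerdot)$ lifting $\underline{\lambda_Y}$. Moreover, that construction assumes $A$ is Koszul, whereas Proposition~\ref{prop: invertible iff double ore extension}(b) does not; so even the corrected mapping-cone argument would not reprove the proposition in the generality stated, and one must rely on the methods of \cite{ZZ1}.
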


\subsection{Graded double Ore extensions of Koszul algebras}
Let $B=A_P[y_1,y_2;\varsigma,\nu]$ be a graded double Ore extension of a Koszul algebra  $A=T(V)/(R)$, where $P=\{p_{12}\neq 0,p_{11}\}\subseteq k$, $\varsigma:A\to M_{2\times 2}(A)$ is an invertible graded algebra homomorphism and $\nu:A\to A^{\oplus 2}$ is a degree one $\varsigma$-derivation. Write $\lambda_{Y}$ for the left multiplication of $Y$ on $B$ and some quotients of $B$. 

We construct three kinds of maps for the tensor algebra $T(V)$ from $\varsigma$ and $\nu$. The first two classes is similar to ones constructed for the case of graded Ore extensions in \cite{SG}, and there is one more class of maps for graded double Ore extensions.

\noindent(I). Write $\sigma=(\sigma_{ij})=\varsigma_{\mid V}:V\to M_{2\times 2}(V)$. Then $\sigma_{T}:=\bigoplus_{i=0}^{\infty}\sigma^{\boxtimes i}$ is an invertible graded algebra homomorphism from $T(V)$ to $M_{2\times 2}(T(V))$ which induces $\varsigma$. It is not hard to obtain that for any $i\geq 0$, 
$$\sigma^{\boxtimes i}(W_i)\subseteq M_{2\times 2}(W_i).$$   

Similar to Definition \ref{def: determinant of varsigma}, we define a linear automorphism of $V^{\otimes i}$ by
\begin{align*}
   \det\sigma^{\boxtimes i}&=\sigma^{\boxtimes i}_{22}\sigma^{\boxtimes i}_{11}-p_{12}\sigma^{\boxtimes i}_{12}\sigma^{\boxtimes i}_{21}-p_{11}\sigma^{\boxtimes i}_{12}\sigma^{\boxtimes i}_{11},
\end{align*}
for any $i\geq 0$. By Proposition \ref{prop: invertible means T-invertible},  $\det\sigma=\det\varsigma_{\mid V}$ is an automorphism of the vector space $V$ and $\sigma$ is $T$-invertible with the $T$-inverse
\begin{equation}\label{eq: t-inverse of sigma}
    \sigma^{-T}=\mathbb{J}^{-1}\bullet\begin{pmatrix}
        (\det\sigma)^{-1} & \\
        & (\det\sigma)^{-1}
    \end{pmatrix}\bullet\sigma^T\bullet \mathbb{J}.
\end{equation}
The following result for $\sigma$ is similar to the condition \eqref{eq: Theta} for $\varsigma$.

\begin{lemma}\label{lem: relation between Theta^{(i)}}
For any $i\geq 0$,
\begin{enumerate}
\item  
$
    (\sigma^{\boxtimes i})^T\bullet \mathbb{J}\bullet \sigma^{\boxtimes i}+\mathbb{J}\bullet\begin{pmatrix}
    (\det\sigma)^{\otimes i}&\\
    &(\det\sigma)^{\otimes i}
\end{pmatrix}=0;$

\item $\det\sigma^{\boxtimes i}=(\det \sigma)^{\otimes i};
$

\item right graded $B$-module homomorphism $
\underline{
\sigma^{\boxtimes i}\boxtimes  \lambda_Y}\left(\mathbb{J}\bullet\left(\sigma^{\boxtimes i}\boxtimes \lambda_Y\right)\right):W_i\otimes B(-2)\to W_i\otimes B
$ is a zero map.
\end{enumerate}
\end{lemma}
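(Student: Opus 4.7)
My plan is to prove (1) by induction on $i$, deduce (2) by reading off a single entry of the identity in (1), and then obtain (3) from (1) combined with the defining relation of the $y$-variables in $B$. (I will read the sign in (1) as a minus, so that it matches the base case \eqref{eq: Theta}; otherwise the statement would fail already at $i=1$ outside characteristic $2$.)

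For (1), the base case $i=1$ is just \eqref{eq: Theta} restricted to $V$. For the inductive step, use the factorization $\sigma^{\boxtimes(i+1)}=\sigma^{\boxtimes i}\boxtimes\sigma$, and correspondingly $(\sigma^{\boxtimes(i+1)})^T=(\sigma^{\boxtimes i})^T\boxtimes\sigma^T$, and compute the $\bullet$-composition entry-wise on a pure tensor $x_1\otimes x_2\in V^{\otimes i}\otimes V$. Because each entry $\mathbb{J}_{pr}$ is a scalar, the sum over $\mathbb{J}$-indices can be regrouped to yield
\[
\bigl((\sigma^{\boxtimes(i+1)})^T\bullet\mathbb{J}\bullet\sigma^{\boxtimes(i+1)}\bigr)_{qj}(x_1\otimes x_2)
=\sum_{l,m}\bigl((\sigma^{\boxtimes i})^T\bullet\mathbb{J}\bullet\sigma^{\boxtimes i}\bigr)_{lm}(x_1)\otimes\sigma_{lq}\sigma_{mj}(x_2).
\]
The inductive hypothesis replaces the inner factor by $\mathbb{J}_{lm}(\det\sigma)^{\otimes i}$. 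Pulling the scalar through and collapsing the $l,m$ sum gives $(\det\sigma)^{\otimes i}(x_1)\otimes(\sigma^T\bullet\mathbb{J}\bullet\sigma)_{qj}(x_2)$. The $i=1$ case further simplifies this to $\mathbb{J}_{qj}(\det\sigma)(x_2)\cdot(\det\sigma)^{\otimes i}(x_1)=\mathbb{J}_{qj}(\det\sigma)^{\otimes(i+1)}(x_1\otimes x_2)$, which is precisely the $(q,j)$-entry of $\mathbb{J}\bullet\mathrm{diag}((\det\sigma)^{\otimes(i+1)},(\det\sigma)^{\otimes(i+1)})$.

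Part (2) follows by reading off the $(2,1)$-entry of (1) and using the values $\mathbb{J}_{11}=-p_{11}$, $\mathbb{J}_{12}=-p_{12}$, $\mathbb{J}_{21}=1$, $\mathbb{J}_{22}=0$: the left-hand side unpacks to the very expression $\sigma^{\boxtimes i}_{22}\sigma^{\boxtimes i}_{11}-p_{12}\sigma^{\boxtimes i}_{12}\sigma^{\boxtimes i}_{21}-p_{11}\sigma^{\boxtimes i}_{12}\sigma^{\boxtimes i}_{11}$ that defines $\det\sigma^{\boxtimes i}$. Part (3) is obtained by viewing $\sigma^{\boxtimes i}\boxtimes\lambda_Y$ as a right $B$-module map $W_i\otimes B\to(W_i\otimes B)^{\oplus 2}$ and computing the composition entry-wise on $w\otimes b$. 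Regrouping the resulting quadruple sum by collecting the monomials $y_my_k$ gives
\[
\underline{\sigma^{\boxtimes i}\boxtimes\lambda_Y}\bigl(\mathbb{J}\bullet(\sigma^{\boxtimes i}\boxtimes\lambda_Y)\bigr)(w\otimes b)
=\sum_{m,k}\bigl((\sigma^{\boxtimes i})^T\bullet\mathbb{J}\bullet\sigma^{\boxtimes i}\bigr)_{mk}(w)\otimes y_my_k b,
\]
which by (1) equals $(\det\sigma)^{\otimes i}(w)\otimes(Y^T\mathbb{J}Y)b$. Since $Y^T\mathbb{J}Y=y_2y_1-p_{12}y_1y_2-p_{11}y_1^2$ is the defining relation of $J$ and vanishes in $B$ (we are in the case $\kappa=0$), the composition is the zero map, as required.

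The main obstacle is careful bookkeeping of $\bullet$ and $\boxtimes$: in general $\bullet$ does not distribute over $\boxtimes$ (see Remark \ref{rem: tensor is not unusual}(2)), so one must verify that the particular regroupings used above are legitimate. The key observation is that $\mathbb{J}$ is constant — its entries are scalars — so it commutes through any $\boxtimes$-factorization without the diagonality hypothesis required in general, and this is precisely what makes the entry-wise computation in the inductive step collapse as it does.
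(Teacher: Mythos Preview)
Your proof is correct and follows essentially the same route as the paper's: the paper also establishes the recursion $\Theta_{i;st}=\sum_{p,q}\Theta_{i-1;pq}\otimes\sigma_{ps}\sigma_{qt}$ and collapses it inductively (phrased as $\Theta_{i;st}=\Theta_{i-1;21}\otimes\Theta_{1;st}$), reads off (b) from the $(2,1)$-entry, and for (c) expands the composition as $\sum_{s,t}\Theta_{i;st}\otimes\lambda_{y_s}\lambda_{y_t}$ and uses the relation $y_2y_1=p_{12}y_1y_2+p_{11}y_1^2$ in $B$. Your observation that the sign in part (a) must be a minus (matching \eqref{eq: Theta}) is correct---this is a typo in the statement.
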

\begin{proof} Write $\Theta_i=(\Theta_{i;st})=(\sigma^{\boxtimes i})^T\bullet \mathbb{J}\bullet \sigma^{\boxtimes i}:V^{\otimes i}\to M_{2\times 2}(V^{\otimes i})$, that is $\Theta_{i;st}=\sigma^{\boxtimes i}_{2s}\sigma^{\boxtimes i}_{1t}-p_{12}\sigma^{\boxtimes i}_{1s}\sigma^{\boxtimes i}_{2t}-p_{11}\sigma^{\boxtimes i}_{1s}\sigma^{\boxtimes i}_{1t},$
for all $s,t=1,2$ and $i\geq 0$. In particular, $\Theta_{i;21}=\det\sigma^{\boxtimes i}.$

Obviously, the results hold if $i=0$. (a) and (b) hold if $i=1$ by \eqref{eq: Theta}. 

For any $i\geq 2$, it is not hard to obtain that 
$
\Theta_{i;st}=\sum_{p,q=1}^2\Theta_{i-1;pq}\otimes \sigma_{ps}\sigma_{qt}$
for all $s,t=1,2$. Inductively, one obtains that
$$
\Theta_{i;st}=\Theta_{i-1;21}\otimes \Theta_{1;st}+\Theta_{i-1;22}\otimes \sigma_{2s}\sigma_{2t}=\Theta_{i-1;21}\otimes \Theta_{1;st},
$$
for all $i\geq 2$ and $s,t=1,2$. Then (a) and (b) follow.

To prove (c), we note that the two left multiplications $\lambda_{y_2}\lambda_{y_1}$ and $p_{12}\lambda_{y_1}\lambda_{y_2}+p_{11}\lambda^2_{y_1}$ on $B$ are the same. By (a), one obtains that
\begin{align*}
\underline{
\sigma^{\boxtimes i}\boxtimes  \lambda_Y}\left(\mathbb{J}\bullet\left(\sigma^{\boxtimes i}\boxtimes \lambda_Y\right)\right)&=\left(\Theta_{i;11}+p_{11}\Theta_{i;21}\right)\otimes \lambda_{y_1}^2+\left(\Theta_{i;12}+p_{12}\Theta_{i;21}\right)\otimes \lambda_{y_1} \lambda_{y_2}+\Theta_{i;22}\otimes \lambda_{y_2}^2=0.\qedhere
\end{align*}
\end{proof}

The lemma above implies that $\det\varsigma$ can be induced by $\bigoplus_{i=0}^{\infty}(\det \sigma)^{\otimes i}$.

\medskip
\noindent(II). Choose a linear map $\delta=(\delta_{i}):V\to (V\otimes V)^{\oplus 2}$ such that $\pi^{\oplus 2}_A\delta=\nu_{\mid V}$. One can extend $\delta$ to a degree one $\sigma_T$-derivation from $T(V)$ to $(T(V)\otimes T(V))^{\oplus 2}$ satisfying
\begin{equation}\label{condition for delta}
\delta(R)\subseteq (R\otimes V+V\otimes R)^{\oplus 2},
\end{equation}
also denoted by $\delta$. Clearly, $\nu$ can be induced by $\delta$. By the condition (\ref{condition for delta}), there exist two linear maps
$$\delta_{2,r}=(\delta_{2,r;j}): R\to (R\otimes V)^{\oplus 2},\quad\delta_{2,l}=(\delta_{2,l;j}): R\to (
V\otimes R)^{\oplus 2},
$$ such that
\begin{equation*}
\delta_{\mid R}=\delta_{2,r}+\delta_{2,l}.
\end{equation*}

\medskip

\noindent(III). By \eqref{eq: Nu}, one obtains that
$
\underline{\delta}(\mathbb{J}\bullet \delta)=\left(\delta_2\delta_1-p_{12}\delta_1\delta_2-p_{11}\delta_1\delta_1\right)(V)\subseteq R\otimes V+V\otimes R.
$
We define two linear maps
$$\upsilon_{1,r}=(\upsilon_{1,r;j}): V\to R\otimes V,\quad\upsilon_{1,l}=(\upsilon_{1,l;j}): V\to 
V\otimes R,
$$ such that
\begin{equation*}
\underline{\delta}(\mathbb{J}\bullet \delta)_{\mid V}=\upsilon_{1,r}+\upsilon_{1,l}.
\end{equation*}
For convenience, we write $\delta_{0,r}=\delta_{0,l}=\upsilon_{0,r}=\upsilon_{0,l}=0$, and $\delta_{1,r}=(\delta_{1,r;j})={\delta}_{|V}=(\delta_{1,l;j})=\delta_{1,l}$.

It should be noted that pairs $(\delta_{2,r},\delta_{2,l})$ and $(\upsilon_{1,r},\upsilon_{1,l})$ for $\nu$ are not unique.

\subsection{Minimal free resolution}

Now we begin to construct a minimal free resolution of $k_B$. 

Applying $-\otimes_AB$ to the Koszul complex of $ k_{A}$, one obtains a free resolution $Q_\centerdot$ of $B/A_{\geq1}B$ as right graded $B$-modules,
\begin{equation*}
Q_\centerdot=(\cdots\xlongrightarrow{} W_d\otimes B\xlongrightarrow{\partial_d}W_{d-1}\otimes B\xlongrightarrow{\partial_{d-1}}\cdots\xlongrightarrow{\partial_{3}}W_2\otimes B\xlongrightarrow{\partial_{2}}W_1\otimes B\xlongrightarrow{\partial_{1}}B\to0\to\cdots)\xlongrightarrow{\varepsilon_A\otimes_AB}B/A_{\geq1}B,
\end{equation*}
where $\partial_{i}=\partial^A_{i}\otimes_AB=\id_V^{\otimes i-1}\otimes m_B$ and $m_B$ is the multiplication of $B$ for $i\geq1$.

\begin{lemma}\label{lemma: construction of delta_r and delta_l}
There exists a pair $\left(\{\delta_{i,r}\}_{i\geq 0}, \{\delta_{i,l}\}_{i\geq 0}\right)$ of linear maps for $\nu$, where  $$\delta_{i,r}=(\delta_{i,r;j}):W_i\to( W_i\otimes V)^{\oplus
2}\quad\text{and}\quad\delta_{i,l}=(\delta_{i,l;j}):W_i\to( V\otimes W_i)^{\oplus
2}$$ 
for all $i\geq 0$, satisfying the following two conditions:
\begin{enumerate}
    \item the following diagram is commutative 
{\footnotesize
$$
~\hspace{-3mm}\xymatrix{
Q_\centerdot(-2)=\ar[d]^{f_\centerdot}
\\
Q_\centerdot(-1)^{\oplus2}=
}
\xymatrix@C=18px{
(\cdots\ar[r] &W_d\otimes B(-2) \ar[r]^{\partial_d}\ar[d]^{f_d}&\cdots\ar[r]^(0.35){\partial_{2}} &W_1\otimes B(-2)\ar[r]^(0.55){\partial_{1}}\ar[d]^{f_1}&B(-2)\ar[r]\ar[d]^{f_0}
\ar[r]&0\ar[d]^0\ar[r]&\cdots)\ar[r]^{\varepsilon_A\otimes_AB}&
\\
(\cdots\ar[r] & (W_d\otimes B(-1) )^{\oplus 2}\ar[r]^(0.6){{\partial^{\oplus 2}_d}}&\cdots\ar[r]^(0.3){{{\partial^{\oplus 2}_{2}}}} &(W_1\otimes B(-1))^{\oplus 2}\ar[r]^(0.55){{\partial^{\oplus 2}_{1}}} &B(-1)^{\oplus 2}\ar[r]
&0\ar[r]&\cdots)\ar[r]^(0.65){(\varepsilon_A\otimes_AB)^{\oplus2}}&
}
\hspace{-3mm}\xymatrix{
B/A_{\geq1}B(-2)\ar[d]^{\mathbb{J}\bullet\lambda_Y}
\\
(B/A_{\geq1}(-1))^{\oplus 2},
}
$$
}
where each right graded $B$-module homomorphism
\begin{align*}
f_i
=\mathbb{J}\bullet\left(\sigma^{\boxtimes i}\boxtimes \lambda_Y
+({\id}_V^{\otimes i}\otimes m_B)^{\oplus2}(\delta_{i,r}\boxtimes \id_B)
\right);
\end{align*} 
    \item for any $i\geq 1$,
    $$\delta_{i,r}+(-1)^i\delta_{i,l}=\sigma\boxtimes\delta_{i-1,r}+(-1)^i\delta_{i-1,l}\boxtimes {\id}_V.$$
\end{enumerate}
Moreover, as linear maps from $W_i$ to  $(W_{i-1}\otimes B)^{\oplus2}$,
\begin{equation}\label{eq: induction relations on delta_i,r}
(\id_V^{\otimes i-1} \otimes m_B)^{\oplus2}\delta_{i,r}=(\id_V^{\otimes i-1}\otimes m_B)^{\oplus2}\left(\sigma^{\boxtimes i-1}\boxtimes \delta+\delta_{i-1,r} \boxtimes {\id}_V\right),\quad i\geq 2.
\end{equation}
\end{lemma}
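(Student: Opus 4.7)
The plan is induction on $i$. The base cases $i \in \{0,1\}$ are forced by the stated conventions, and $i=2$ reduces, via the defining property $\delta(R) \subseteq (R\otimes V + V\otimes R)^{\oplus 2}$, to choosing an arbitrary splitting $\delta_{|R} = \delta_{2,r} + \delta_{2,l}$ with $\delta_{2,r} : R \to (R\otimes V)^{\oplus 2}$ and $\delta_{2,l} : R \to (V\otimes R)^{\oplus 2}$; the non-uniqueness of this splitting reflects the non-uniqueness of the full quadruple mentioned in the introduction. Condition (b) at $i = 2$ then amounts to the restriction to $R \subseteq V\otimes V$ of the $\sigma_T$-derivation identity $\delta = \sigma\boxtimes\delta + \delta\boxtimes\id_V$ on $V\otimes V$.

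For the inductive step at $i \geq 3$, suppose $\delta_{j,r}, \delta_{j,l}$ have been constructed for $j \leq i-1$. Set
$$
D_i := \sigma \boxtimes \delta_{i-1,r} + (-1)^i\, \delta_{i-1,l} \boxtimes \id_V,
$$
which makes sense on $W_i$ via the inclusions $W_i \subseteq V\otimes W_{i-1}$ and $W_i \subseteq W_{i-1}\otimes V$, with image a priori in $(V\otimes W_{i-1}\otimes V)^{\oplus 2}$. The key claim is
$$
D_i(W_i) \subseteq (W_i \otimes V + V \otimes W_i)^{\oplus 2}.
$$
Granting this, any splitting of $D_i$ along the sum $(W_i\otimes V)^{\oplus 2} + (V\otimes W_i)^{\oplus 2}$ defines the desired $\delta_{i,r}$ and $(-1)^i\delta_{i,l}$, and (b) holds by construction.

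Proving this containment is the expected main obstacle. The plan is to substitute the inductive identity (b) at level $i-1$ into $D_i$, re-expressing it as an iterated arrangement of $\sigma$, $\id_V$ and lower $\delta_{j,*}$, and then invoke the Koszul intersection description of the $W_j$ together with the base case $\delta(R)\subseteq (R\otimes V + V\otimes R)^{\oplus 2}$ to localize the image into $(W_i\otimes V + V\otimes W_i)^{\oplus 2}$. This is the matrix-valued analogue of the bookkeeping carried out for graded Ore extensions in \cite{SG}, now complicated by the $2\times 2$ flavour of $\sigma$ and the alternating sign dictated by $\mathbb{J}$.

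Finally, the chain-map condition (a) is checked by a direct computation on generators $w\otimes 1 \in W_i\otimes B$: expanding $f_i(w\otimes 1)$ via the Ore relation $y_k a = \sum_j\varsigma_{kj}(a)y_j + \nu_k(a)$, the $\sigma$-contributions match automatically (this is the trimmed double Ore extension calculation of \cite{ZVZ}), while the $\delta$-contributions match precisely by (b). The residual identity that remains when one multiplies the last $V$-factor into $B$ is exactly \eqref{eq: induction relations on delta_i,r}, which can therefore be regarded as the $\nu$-part of the chain-map equation.
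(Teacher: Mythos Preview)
Your approach is genuinely different from the paper's, and the gap lies exactly where you flag it: the containment $D_i(W_i)\subseteq (W_i\otimes V + V\otimes W_i)^{\oplus 2}$. You offer only a plan (``substitute (b) at level $i-1$ and invoke the Koszul intersection description''), but this is where the real work is, and it is not clear your strategy closes. After substituting (b) at level $i-1$ one is still left with terms like $\sigma\boxtimes\delta_{i-2,l}\boxtimes\id_V$ and $\delta_{i-2,r}\boxtimes\id_V^{\otimes2}$, and pushing these into $W_i\otimes V + V\otimes W_i$ inside $V\otimes W_{i-1}\otimes V$ is a genuine lattice-of-subspaces question; the intersection $(V\otimes W_{i-1}\otimes V)\cap\bigl((W_{i-1}\otimes V^{\otimes 2})+(V^{\otimes 2}\otimes W_{i-1})\bigr)$ is in general strictly larger than $W_i\otimes V + V\otimes W_i$, so naive containment arguments do not suffice. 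A posteriori the claim is true (it follows from the paper's construction), but you have not given a self-contained argument.

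The paper sidesteps this entirely. It first obtains $\delta_{i,r}$ alone, by invoking the Comparison Theorem to produce a chain map $f_i:Q_\centerdot(-2)\to Q_\centerdot(-1)^{\oplus 2}$ lifting $\mathbb{J}\bullet\lambda_Y$; since $f_i$ lands in the free module $(W_i\otimes B)^{\oplus 2}$, a degree count forces $\bigl(\mathbb{J}^{-1}\bullet f_i - \sigma^{\boxtimes i}\boxtimes\lambda_Y\bigr)\big|_{W_i}$ to land in $(W_i\otimes V)^{\oplus 2}$, and this \emph{is} $\delta_{i,r}$. Relation~\eqref{eq: induction relations on delta_i,r} then falls out of $\partial_i^{\oplus 2}f_i=f_{i-1}\partial_i$. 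Only afterwards does the paper construct $\delta_{i,l}$, by a second homological step: the chain map $\rho_i=(-1)^i(\id_V^{\otimes i}\otimes m_B)^{\oplus 2}\bigl((\delta_{i,r}-\sigma\boxtimes\delta_{i-1,r})\boxtimes\id_B\bigr)$ is null-homotopic (it lifts the zero map), and the homotopy $s_i$ restricted to $W_i$ gives $\delta_{i,l}$, with (b) read off from the homotopy equation. So the paper never needs your containment claim: the targets $(W_i\otimes B)^{\oplus 2}$ and $(V\otimes W_i\otimes B)^{\oplus 2}$ are built into the homological setup, and landing there is automatic. Your route is more elementary in spirit but shifts the difficulty to a combinatorial subspace argument you have not carried out; the paper's route trades that for two standard homological lemmas and is complete.
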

\begin{proof}
(a) Clearly, $
(\mathbb{J}\bullet\lambda_Y)(\varepsilon_A\otimes_A B)=(\varepsilon_A\otimes_A B)^{\oplus 2}f_0
$. It is not hard to check that 
$f_0\partial_1=\partial_1^{\oplus2}f_1$, since
\begin{align*}
f_0\partial_1&= 
(\mathbb{J}\bullet\lambda_Y)m_B
=m_B^{\oplus2}\left(\mathbb{J}\bullet\left(\sigma\boxtimes\lambda_Y+(\id_V\otimes m_B)^{\oplus2}(\delta\boxtimes{\id}_B)\right)\right)=
\partial_1^{\oplus2}f_1.
\end{align*}
Note that $(\id_V\otimes m_B)^{\oplus 2}\delta_{2,r}=(\id_V\otimes m_B)^{\oplus 2}(\delta_{2,r}+\delta_{2,l})=(\id_V\otimes m_B)^{\oplus 2}(\sigma\boxtimes \delta+\delta\boxtimes\id_V)$ as linear maps from $W_2=R$ to $(W_1\otimes B)^{\oplus 2}$, we have
\begin{align*}
\partial_2^{\oplus2}f_2&=(\id_V\otimes m_B)^{\oplus2}\left(\mathbb{J}\bullet
\left(\sigma^{\boxtimes 2}\boxtimes\lambda_Y+({\id}^{\otimes 2}_V\otimes m_B)^{\oplus2}\left(\delta_{2,r}\boxtimes{\id}_B\right)\right)\right)\\
&=\mathbb{J}\bullet\left((\id_V\otimes m_B)^{\oplus2}
\left(\sigma^{\boxtimes 2}\boxtimes\lambda_Y+\sigma\boxtimes \delta\boxtimes{\id}_B+\delta\boxtimes \id_V\boxtimes{\id}_B\right)\right)\\
&=\left(\mathbb{J}\bullet\left(\sigma\boxtimes\lambda_Y+(\id_V\otimes m_B)^{\oplus 2}(\delta\boxtimes {\id}_B)\right)\right)(\id_V\otimes m_B)=f_1\partial_2.
\end{align*}

It turns to construct linear maps $\delta_{i,r}$ for all $i\geq 3$. By the Comparison Theorem, there exist right graded $B$-module homomorphisms $f_i: W_i\otimes B(-2)\to (W_i\otimes B(-1))^{\oplus2}$ such that $f_{i-1}\partial_{i}=\partial_{i}^{\oplus2}f_{i}$  for all $i\geq 3$. One obtains that 
$\left(f_i-\mathbb{J}\bullet\left(\sigma^{\boxtimes i}\boxtimes\lambda_Y\right)
\right)(W_i\otimes B_0)\subseteq(W_i\otimes V)^{\oplus2}$, and so we have linear maps
$$
\delta_{i,r}=(\delta_{i,r;j}):=\left(\mathbb{J}^{-1}\bullet f_i-\left(\sigma^{\boxtimes i}\boxtimes\lambda_Y\right)
\right){\mid_{W_i}}:W_i\to (W_i\otimes V)^{\oplus 2},
$$
and the right graded $B$-module homomorphisms 
$$
f_i=\mathbb{J}\bullet\left(\sigma^{\boxtimes i}\boxtimes\lambda_Y+({\id}_V^{\otimes i}\otimes m_B)^{\oplus2}(\delta_{i,r}\boxtimes \id_B)\right),
$$
for all $i\geq 3$. Since $\partial_i^{\oplus2}f_i=f_{i-1}\partial_i$, that is
\begin{align*}\partial_i^{\oplus2}f_i&=(\id_V^{\otimes i-1}\otimes m_B)^{\oplus2}\left(\mathbb{J}\bullet
\left(\sigma^{\boxtimes i}\boxtimes\lambda_Y+({\id}_V^{\otimes i}\otimes m_B)^{\oplus2}(\delta_{i,r}\boxtimes \id_B)\right)\right),
\\
f_{i-1}\partial_i&=
\left(\mathbb{J}\bullet\left(\sigma^{\boxtimes i-1}\boxtimes\lambda_Y+({\id}_V^{\otimes i}\otimes m_B)^{\oplus2}(\delta_{i-1,r}\boxtimes \id_B)\right)\right)({\id}_V^{\otimes i-1}\otimes m_B)\\
&=({\id}_V^{\otimes i-1}\otimes m_B)^{\oplus2}\left(\mathbb{J}\bullet\left(\sigma^{\boxtimes i}\boxtimes \lambda_Y+\sigma^{\boxtimes i-1}\boxtimes\delta\boxtimes\id_B+\delta_{i-1,r}\boxtimes{\id}_V\boxtimes{\id}_B
\right)\right).
\end{align*}
Hence, 
$$
    (\id_V^{\otimes i-1} \otimes m_B)^{\oplus2}\delta_{i,r}=(\id_V^{\otimes i-1}\otimes m_B)^{\oplus2}\left(\sigma^{\boxtimes i-1}\boxtimes \delta+\delta_{i-1,r} \boxtimes {\id}_V\right),
$$
as linear maps $W_i\to (W_i\otimes B)^{\oplus 2}$ for all $i\geq 2$.

(b) By (a), one obtains that the following diagram is commutative (without dotted arrows) .
\begin{scriptsize}
$$
\xymatrix{
\hspace{-1mm}\cdots\ar[r]&W_{d+1}\otimes B(-2)\ar[r]^{\partial_{d+1}}\ar[d]^{\rho_{d+1}}\ar@{-->}[ld]_{s_{d+1}}  &\cdots\ar[r]^(0.3){} &W_2\otimes B(-2)\ar[r]^(0.5){{\partial_{2}}} \ar[d]^{\rho_2}\ar@{-->}[ld]_{s_2}&W_1\otimes B(-2)\ar[r]^(0.55){{{\partial_1}}}\ar[d]^{\rho_1}\ar@{-->}[ld]_{s_1}&{\rm Im}\partial_1(-2)\ar[r]\ar[d]^{0}&0\\
\hspace{-1mm}\cdots\ar[r]&(V\otimes W_d\otimes B(-1))^{\oplus2}\ar[r]^(0.7){-({\id}_V\otimes\partial_d)^{\oplus2}}&\cdots
\ar[r] &(V\otimes W_1\otimes B(-1))^{\oplus2}\ar[r]^(0.55){{-({\id}_V\otimes \partial_{1})^{\oplus2}}} &(V\otimes B(-1))^{\oplus2}\ar[r]^(0.47){{-({\id}_V\otimes\varepsilon_A\otimes B)^{\oplus2}}}&(V\otimes B/A_{\geq1}B)^{\oplus2}(-1)\ar[r]&0,
}
$$
\end{scriptsize}
where the top row is a truncation of $Q_\centerdot(-2)$, the bottom row is the complex  $\left(V^{\oplus 2}\otimes Q_\centerdot(-1)\right)[-1]$ and $$\rho_i:=(-1)^i({\id}_V^{\otimes i}\otimes m_B)^{\oplus2}\left(\delta_{i,r}-\sigma\boxtimes\delta_{i-1,r}\right)\boxtimes {\id}_B,$$
for any $i\geq 1$. So the chain homomorphism $\{\rho_i\}$ is null homotopic, and  there exist right graded $B$-module homomorphisms $s_i:W_i\otimes B(-2)\to (V\otimes W_i\otimes B(-1))^{\oplus 2}$ such that $\rho_i=s_{i-1}\partial_i-(\id_V\otimes \partial_i)^{\oplus 2}s_i$ for all $i\geq 1$ where $s_0=0.$ By an argument on the grading, one obtains linear maps
$$
\delta_{i,l}=(\delta_{i,l;j})={s_i}_{\mid W_i}: W_i\to (V\otimes W_i)^{\oplus 2}
$$
for all $i\geq 0$. Then we can write $s_i=\delta_{i,l}\boxtimes\id_B$. Since
\begin{align*}
(-1)^i({\id}_V^{\otimes i}\otimes m_B)^{\oplus2}\left(\delta_{i,r}-\sigma\boxtimes\delta_{i-1,r}\right)\boxtimes {\id}_B=&
\left(\delta_{i-1,l}\boxtimes\id_B\right)\left(\id_V^{\otimes i-1}\otimes m_B\right)
-(\id_V^{\otimes i}\otimes m_B)^{\oplus 2}\left(\delta_{i-1,l}\boxtimes\id_B\right) \\
=&(\id_V^{\otimes i}\otimes m_B)^{\oplus 2}\left(\delta_{i-1,l}\boxtimes\id_V\boxtimes\id_B-\delta_{i,l}\boxtimes\id_B\right)
\end{align*}
as right graded $B$-module homomorphisms from $W_i\otimes B(-2)$ to $\left(V\otimes W_i\otimes B(-1)\right)^{\oplus 2}$, we have
$$
\delta_{i,r}+(-1)^i\delta_{i,l}=\sigma\boxtimes \delta_{i-1,r}+(-1)^i\delta_{i-1,l}\boxtimes \id_V,
$$
by restricted on $W_i$ for all $i\geq 1$.
\end{proof}

Let $\left(\{\delta_{i,r}\}, \{\delta_{i,l}\}\right)$ be a pair of linear maps for $\nu$ constructed as in Lemma \ref{lemma: construction of delta_r and delta_l}. Following \eqref{eq: Gamma} and \eqref{eq: Nu}, we define two classes of linear maps:
\begin{align*}
\Gamma_{i,\star}&=(\Gamma_{i,\star;s})=\left(\left(\sigma^{\boxtimes i+1}\right)^T\bullet\mathbb{J}\bullet\delta_{i,\star}\right)^T
+
\delta^T_{i,\star}\bullet \mathbb{J}\bullet
\sigma^{\boxtimes i}:W_i\to M_{1\times2}(V^{\otimes i+1}),\\
\Delta_{i}&=\underline{\sigma^{\boxtimes i}\boxtimes \delta+\delta_{i,r}\boxtimes\id_V}(\mathbb{J}\bullet\delta_{i,r}):W_i\to W_i\otimes V^{\otimes 2},
\end{align*}
for all $i\geq 0$ and $\star=$``$r$'' or  ``$l$''. To be precise, 
\begin{align*}
&\Gamma_{i,\star;s}=\left(\delta_{i,\star;2}\sigma^{\boxtimes i}_{1s}+\sigma^{\boxtimes i+1}_{2s}\delta_{i,\star;1}\right)-p_{12}\left(\delta_{i,\star;1}\sigma^{\boxtimes i}_{2s}+\sigma^{\boxtimes i+1}_{1s}\delta_{i,\star;2}\right)-p_{11}\left(\delta_{i,\star;1}\sigma^{\boxtimes i}_{1s}+\sigma^{\boxtimes i+1}_{1s}\delta_{i,\star;1}\right),\\
&\Delta_i=\left(\sum_{u=1}^2 \sigma^{\boxtimes i}_{2u}\otimes \delta_{u}+\delta_{i,r;2}\otimes \id_V\right)\delta_{i,r;1}-\sum_{v=1}^2p_{1v}\left(\sum_{u=1}^2 \sigma^{\boxtimes i}_{1u}\otimes \delta_{u}+\delta_{i,r;1}\otimes \id_V\right)\delta_{i,r;v},
\end{align*}
for all $s=1,2$, $i\geq 0$ and $\star=$``$r$'' or  ``$l$''.

By  \eqref{eq: Gamma} and the definitions of $\delta_{i,r}$ and $\delta_{i,l}$ for all $i\geq 0$, we have the following result immediately.
\begin{lemma}\label{lem: prop for Gamma}
\begin{enumerate}
    \item $\Image \Gamma_{1,r}=\Image \Gamma_{1,l}\subseteq M_{1\times2}(R)$.
    \item  $\Image \Gamma_{i,r}\subseteq M_{1\times2}(W_i\otimes V)$ and  $\Image \Gamma_{i,l}\subseteq M_{1\times2}(V\otimes W_i)$ for any $i\geq 2$.
\end{enumerate}

\end{lemma}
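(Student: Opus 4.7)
The plan has two steps, separated according to the two parts of the statement: part (a) reduces to the identity \eqref{eq: Gamma} at the level of $A$, and part (b) follows from the preservation of the Koszul pieces $W_i$ under the maps $\sigma^{\boxtimes i}$.

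For part (a), I first note that by the convention $\delta_{1,r}=\delta_{1,l}=\delta_{\mid V}$ adopted at the end of the construction, the two maps $\Gamma_{1,r;s}$ and $\Gamma_{1,l;s}$ coincide as maps $V\to V^{\otimes 2}$, so the equality $\Image\Gamma_{1,r}=\Image\Gamma_{1,l}$ is automatic. To prove the inclusion $\Image\Gamma_{1,r}\subseteq M_{1\times 2}(R)$, I would apply $\pi_A\colon V^{\otimes 2}\to A_2$ and show that $\pi_A\circ\Gamma_{1,r;s}=0$; since $R=V^{\otimes 2}\cap\Ker\pi_A$, this yields the claim. Two easy facts make the computation clean: by construction $\pi_A\circ\delta_{1,r;j}=\nu_j{}_{\mid V}$, and because $\varsigma$ is a graded algebra homomorphism extending $\sigma$ on $V$, one has $\pi_A\circ\sigma^{\boxtimes 2}_{pq}=\varsigma_{pq}\circ\pi_A$ on $V\otimes V$ for every $p,q$ (evaluating on $v_1\otimes v_2\in V\otimes V$ and using that $\varsigma_{pq}(v_1v_2)=\sum_l\sigma_{pl}(v_1)\sigma_{lq}(v_2)$). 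Substituting, $\pi_A(\Gamma_{1,r;s}(v))$ becomes exactly the $s$-th component of $\bigl(\nu^T\bullet\mathbb{J}\bullet\varsigma+(\varsigma^T\bullet\mathbb{J}\bullet\nu)^T\bigr)(v)$, which vanishes by \eqref{eq: Gamma}.

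For part (b), the key input is the already recorded property $\sigma^{\boxtimes i}(W_i)\subseteq M_{2\times 2}(W_i)$ from (I). Decomposing $\sigma^{\boxtimes i+1}$ as $\sigma^{\boxtimes i}\boxtimes\sigma$, respectively $\sigma\boxtimes\sigma^{\boxtimes i}$, and evaluating componentwise on pure tensors $\alpha\otimes\beta$ with $\alpha\in W_i$, $\beta\in V$ (respectively $\alpha\in V$, $\beta\in W_i$), one sees at once that $\sigma^{\boxtimes i+1}$ sends $W_i\otimes V$ into $M_{2\times 2}(W_i\otimes V)$ and $V\otimes W_i$ into $M_{2\times 2}(V\otimes W_i)$. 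With this in hand, each of the six summands defining $\Gamma_{i,r;s}$ is either a composition of $\sigma^{\boxtimes i}_{\cdot s}$ (which preserves $W_i$) with $\delta_{i,r;j}$ (whose image lies in $W_i\otimes V$), or a composition of $\delta_{i,r;j}$ with $\sigma^{\boxtimes i+1}_{\cdot s}$ (which preserves $W_i\otimes V$ by the above). In either case the image lies in $W_i\otimes V$, giving $\Image\Gamma_{i,r}\subseteq M_{1\times 2}(W_i\otimes V)$. The argument for $\Gamma_{i,l}$ is completely symmetric, with $V\otimes W_i$ in place of $W_i\otimes V$.

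Neither step involves a deep calculation. The only delicate point is matching the compositions and indices in the definition of $\Gamma_{i,\star;s}$ with the tensor positions in $W_i\otimes V$ or $V\otimes W_i$; essentially part (a) says that $\Gamma_{1,\star}$ is a lift of \eqref{eq: Gamma} from $A$ to $T(V)$, while part (b) records the compatibility of $\sigma^{\boxtimes\bullet}$ with the Koszul filtration. I do not anticipate any real obstacle beyond careful bookkeeping of these compositions.
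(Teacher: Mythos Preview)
Your proposal is correct and follows exactly the approach the paper has in mind: the paper's proof consists of the single sentence ``By \eqref{eq: Gamma} and the definitions of $\delta_{i,r}$ and $\delta_{i,l}$ for all $i\geq 0$, we have the following result immediately,'' and you have simply unpacked this---using \eqref{eq: Gamma} together with $\ker(\pi_A|_{V^{\otimes 2}})=R$ for part (a), and the facts $\sigma^{\boxtimes i}(W_i)\subseteq M_{2\times 2}(W_i)$, $\Image\delta_{i,r}\subseteq (W_i\otimes V)^{\oplus 2}$, $\Image\delta_{i,l}\subseteq (V\otimes W_i)^{\oplus 2}$ for part (b).
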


\begin{lemma}\label{lem: realtions between Gamma}
For any $i\geq 1$,
\begin{align}
    \Gamma_{i,r}+(-1)^i\Gamma_{i,l}&=\det\sigma\boxtimes \Gamma_{i-1,r}+(-1)^i\Gamma_{i-1,l}\boxtimes \sigma\label{eq: relations between Gamma r and l}\\
    &=\sum_{u=1}^{i-1}(-1)^{u+1}(\det \sigma)^{\boxtimes i-u-1}\boxtimes \left(\Gamma_{u,l}\boxtimes \sigma+\det \sigma \boxtimes \Gamma_{u,l}\right).\label{eq: Gamma r and l to l}
\end{align}
Moreover, for all $i\geq 1$,
\begin{enumerate}
    \item $\sum_{u=1}^i(-1)^u\Gamma_{u,r}\boxtimes\sigma^{\boxtimes i-u}=\sum_{u=1}^{i}(-1)^{i+u+1}(\det \sigma)^{\boxtimes i-u}\boxtimes \Gamma_{u,l}$;

    \item $\left(\sum_{u=1}^i(-1)^u\Gamma_{u,r}\boxtimes\sigma^{\boxtimes i-u}\right)(W_i)=\left(\sum_{u=1}^{i}(-1)^{u+1}(\det \sigma)^{\boxtimes i-u}\boxtimes \Gamma_{u,l}\right)(W_i)\subseteq M_{1\times 2}(W_{i+1})$;

    \item If $W_{d+1}=0$,  
    \begin{align*}
             & \sum_{u=1}^d(-1)^u
     \left( \left(\sigma^{\boxtimes d+1}\right)^T\bullet\mathbb{J}\bullet(\delta_{u,r}\boxtimes\id^{\boxtimes d-u})\right)^T
      =
        \sum_{u=1}^d(-1)^{u+1}(
      \delta_{u,r}\boxtimes \id_V^{\boxtimes d-u})^T\bullet \mathbb{J}\bullet
\sigma^{\boxtimes d}.
    \end{align*}
\end{enumerate}
\end{lemma}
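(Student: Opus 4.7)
My plan is to establish the two displayed equalities first, and then deduce (a), (b), (c) from them.

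For the first equality, I will form $\Gamma_{i,r}+(-1)^i\Gamma_{i,l}$ from the definition of $\Gamma_{i,\star}$ and apply Lemma \ref{lemma: construction of delta_r and delta_l}(b) to substitute $\delta_{i,r}+(-1)^i\delta_{i,l}=\sigma\boxtimes\delta_{i-1,r}+(-1)^i\delta_{i-1,l}\boxtimes\id_V$. Factoring $\sigma^{\boxtimes i+1}=\sigma\boxtimes\sigma^{\boxtimes i}=\sigma^{\boxtimes i}\boxtimes\sigma$ as needed, I use Remark \ref{rem: tensor is not unusual}(b) with $\id$ playing the role of the diagonal map, together with Lemma \ref{lem: relation between Theta^{(i)}}(a), to collapse internal $\sigma^T\bullet\mathbb{J}\bullet\sigma$ factors into expressions of the form $\mathbb{J}\bullet\mathrm{diag}(\det\sigma,\det\sigma)$; regrouping then yields the right-hand side. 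For the second equality, setting $L_i=\Gamma_{i,r}+(-1)^i\Gamma_{i,l}$ and substituting $\Gamma_{i-1,r}=L_{i-1}+(-1)^i\Gamma_{i-1,l}$ into the first equality produces the recursion $L_i=\det\sigma\boxtimes L_{i-1}+(-1)^i(\det\sigma\boxtimes\Gamma_{i-1,l}+\Gamma_{i-1,l}\boxtimes\sigma)$. Telescoping down to $L_1=0$, which holds since $\Gamma_{1,r}=\Gamma_{1,l}$ by Lemma \ref{lem: prop for Gamma}(a), produces the claimed sum.

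For (a), I will induct on $i$; the base $i=1$ is $\Gamma_{1,r}=\Gamma_{1,l}$. Denoting the two sides by $A_i$ and $B_i$, one computes $A_i=A_{i-1}\boxtimes\sigma+(-1)^i\Gamma_{i,r}$ and, after reindexing the sum, $B_i=-\det\sigma\boxtimes B_{i-1}-\Gamma_{i,l}$. Assuming $A_{i-1}=B_{i-1}$, the difference $A_i-B_i$ equals $(-1)^iL_i+(B_{i-1}\boxtimes\sigma+\det\sigma\boxtimes B_{i-1})$; expanding $B_{i-1}$ and comparing against the second equality gives $B_{i-1}\boxtimes\sigma+\det\sigma\boxtimes B_{i-1}=(-1)^{i+1}L_i$, so $A_i-B_i=0$. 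For (b), Lemma \ref{lem: prop for Gamma}(b) places each LHS term $\Gamma_{u,r}\boxtimes\sigma^{\boxtimes i-u}$ restricted to $W_i$ into $M_{1\times 2}(W_u\otimes V^{\otimes i-u+1})$ and each RHS term into $M_{1\times 2}(V^{\otimes i-u+1}\otimes W_u)$. The boundary positions $s=0$ and $s=i-1$ of the Koszul decomposition $W_{i+1}=\bigcap_s V^{\otimes s}\otimes R\otimes V^{\otimes i-1-s}$ are immediate from the LHS and RHS respectively; for intermediate $s$ I substitute the first equality into single summands to shift the index range, exploiting $W_{u+1}=W_u\otimes V\cap V\otimes W_u$.

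For (c), a direct entry-level computation using $\sigma^{\boxtimes d+1}=\sigma^{\boxtimes u+1}\boxtimes\sigma^{\boxtimes d-u}$ and Remark \ref{rem: tensor is not unusual}(b) yields
\[ ((\sigma^{\boxtimes u+1})^T\bullet\mathbb{J}\bullet\delta_{u,r})^T\boxtimes\sigma^{\boxtimes d-u}=((\sigma^{\boxtimes d+1})^T\bullet\mathbb{J}\bullet(\delta_{u,r}\boxtimes\id^{\boxtimes d-u}))^T, \]
\[ (\delta_{u,r}^T\bullet\mathbb{J}\bullet\sigma^{\boxtimes u})\boxtimes\sigma^{\boxtimes d-u}=(\delta_{u,r}\boxtimes\id^{\boxtimes d-u})^T\bullet\mathbb{J}\bullet\sigma^{\boxtimes d}. \]
These identities decompose $\Gamma_{u,r}\boxtimes\sigma^{\boxtimes d-u}$ into the two summand types appearing in (c). Since $W_{d+1}=0$, (b) forces $A_d=\sum_{u=1}^d(-1)^u\Gamma_{u,r}\boxtimes\sigma^{\boxtimes d-u}$ restricted to $W_d$ to vanish, and separating the two summand types gives (c). The main obstacle is the first equality, where careful bookkeeping of $\boxtimes$, $\bullet$, and transposes (which do not commute in general) is required; the $\det\sigma$ cancellations rely on the diagonal-map criterion in Remark \ref{rem: tensor is not unusual}(b) together with the specific form of Lemma \ref{lem: relation between Theta^{(i)}}(a). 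A secondary challenge is (b): since no single summand of (a) individually lands in $W_{i+1}$, controlling the intermediate positions in the Koszul decomposition requires further rewriting beyond the direct use of (a).
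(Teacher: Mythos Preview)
Your overall strategy matches the paper's: the first displayed equality via Lemma~\ref{lemma: construction of delta_r and delta_l}(b) combined with Lemma~\ref{lem: relation between Theta^{(i)}}(a), the second by telescoping the recursion down to $L_1=0$, part~(a) by induction, and part~(c) by unpacking $\Gamma_{u,r}\boxtimes\sigma^{\boxtimes d-u}$ into its two halves and invoking~(b) with $W_{d+1}=0$. On these points the arguments are essentially the same, the paper merely working entrywise via the auxiliary quantities $\Xi^{(i)}_{\star;st;j}=\delta_{i,\star;s}\sigma^{\boxtimes i}_{tj}+\sigma^{\boxtimes i+1}_{sj}\delta_{i,\star;t}$ where you work with the $\boxtimes/\bullet$ calculus directly.

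The one place where your plan is materially weaker than the paper's is part~(b). Your proposal---verify membership in $V^{\otimes s}\otimes R\otimes V^{\otimes i-1-s}$ position by position, handling boundary $s$ from the LHS/RHS and intermediate $s$ by ``substituting the first equality into single summands to shift the index range''---is vague, and it is not clear it closes. For a fixed intermediate $s$, from the LHS only the summands with $u\ge s+2$ land in the correct position (and from the RHS only those with $u\ge i-s+1$); the remaining terms are not individually controlled, and a single application of \eqref{eq: relations between Gamma r and l} to one summand reintroduces terms of the same uncontrolled type. You would need a systematic rewriting scheme that you have not specified. The paper avoids this entirely with a short induction: set $\Upsilon_i=\sum_{u=1}^{i}(-1)^{u+1}(\det\sigma)^{\boxtimes i-u}\boxtimes\Gamma_{u,l}$ and observe the two recursions
\[
\Upsilon_i=\det\sigma\boxtimes\Upsilon_{i-1}+(-1)^{i+1}\Gamma_{i,l},\qquad
\Upsilon_i=\Gamma_{i,r}-\Upsilon_{i-1}\boxtimes\sigma,
\]
the second coming from \eqref{eq: Gamma r and l to l}. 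By induction $\Upsilon_{i-1}(W_{i-1})\subseteq M_{1\times 2}(W_i)$, so the first recursion with Lemma~\ref{lem: prop for Gamma}(b) gives $\Upsilon_i(W_i)\subseteq M_{1\times 2}(V\otimes W_i)$, while the second gives $\Upsilon_i(W_i)\subseteq M_{1\times 2}(W_i\otimes V)$; intersecting yields $M_{1\times 2}(W_{i+1})$. I recommend replacing your position-by-position argument with this two-line induction.
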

\begin{proof}
Write $\Xi^{(i)}_{\star;st;j}=\delta_{i,\star;s}\sigma^{\boxtimes i}_{tj}+\sigma^{\boxtimes i+1}_{sj}\delta_{i,\star;t}$, and then $\Gamma_{i,\star;j}=\Xi^{(i)}_{\star;21;j}-p_{11}\Xi^{(i)}_{\star;11;j}-p_{12}\Xi^{(i)}_{\star;12;j}$ for all $i\geq 0$, $s,t,j=1,2$ and $\star=$``$r$'' or  ``$l$''.

By Lemma \ref{lemma: construction of delta_r and delta_l}, one obtains that
\begin{equation}\label{eq: relations between Xi}
    \Xi^{(i)}_{r;st;j}+(-1)^i\Xi^{(i)}_{l,st;j}=\sum_{u,v=1}^2 \sigma_{su}\sigma_{tv}\otimes \Xi^{(i-1)}_{r;uv;j}+(-1)^{i}\sum_{u=1}^2\Xi^{(i-1)}_{l;st;u}\otimes \sigma_{uj},
\end{equation}
for any $i\geq 1$ and $s,t,j=1,2$. Then
\begin{align*}
&\Gamma_{i,r;j}+(-1)^i\Gamma_{i,l;j}\\
=&\sum_{u,v=1}^2 \sigma_{2u}\sigma_{1v}\otimes \Xi^{(i-1)}_{r;uv;j}+(-1)^{i}\sum_{u=1}^2\Xi^{(i-1)}_{l;21;u}\otimes \sigma_{uj}-p_{11}\left(
\sum_{u,v=1}^2 \sigma_{1u}\sigma_{1v}\otimes \Xi^{(i-1)}_{r;uv;j}+(-1)^{i}\sum_{u=1}^2\Xi^{(i-1)}_{l;11;u}\otimes \sigma_{uj}
\right)\\
&-p_{12}\left(
\sum_{u,v=1}^2 \sigma_{1u}\sigma_{2v}\otimes \Xi^{(i-1)}_{r;uv;j}
+(-1)^{i}\sum_{u=1}^2\Xi^{(i-1)}_{l;12;u}\otimes \sigma_{uj}
\right)\\
=&\sum_{u,v=1}^2\det\sigma\otimes \Xi^{(i-1)}_{r;uv;j}+(-1)^{i}\sum_{u=1}^2\left(\Xi^{(i-1)}_{l;21;u}-p_{11}\Xi^{(i-1)}_{l;11;u}-p_{12}\Xi^{(i-1)}_{l;12;u}\right)\otimes \sigma_{uj}\\
=&\det\sigma\otimes \Gamma_{i-1,r;j}+(-1)^{i}\sum_{u=1}^2\Gamma_{i-1,l;u}\otimes \sigma_{uj},
\end{align*}
where the first equation is by \eqref{eq: relations between Xi} and the last equation is by Lemma \ref{lem: relation between Theta^{(i)}} for all $i\geq 1$ and $j=1,2$.

Another equality holds when $i=1$ since both sides are zero, and we show it also holds when $i>1$ inductively.
\begin{align*}
\Gamma_{i,r}+(-1)^i\Gamma_{i,l}=&\det\sigma\boxtimes \Gamma_{i-1,r}+(-1)^i\Gamma_{i-1,l}\boxtimes \sigma\\
=&\det\sigma\boxtimes \left(
(-1)^{i}\Gamma_{i-1,l}+\sum_{u=1}^{i-2}(-1)^{u+1}(\det \sigma)^{\boxtimes i-u-2}\boxtimes \left(\Gamma_{u,l}\boxtimes \sigma+\det \sigma \boxtimes \Gamma_{u,l}\right)
\right)+(-1)^i\Gamma_{i-1,l}\boxtimes \sigma\\
=&\sum_{u=1}^{i-2}(-1)^{u+1}(\det \sigma)^{\boxtimes i-u-1}\boxtimes \left(\Gamma_{u,l}\boxtimes \sigma+\det \sigma \boxtimes \Gamma_{u,l}
\right)+(-1)^{i}\left(\Gamma_{i-1,l}\boxtimes \sigma+\det\sigma\boxtimes
\Gamma_{i-1,l}\right)\\
=&\sum_{u=1}^{i-1}(-1)^{u+1}(\det \sigma)^{\boxtimes i-u-1}\boxtimes \left(\Gamma_{u,l}\boxtimes \sigma+\det \sigma \boxtimes \Gamma_{u,l}
\right).
\end{align*}

By \eqref{eq: relations between Gamma r and l}, one obtains that
\begin{align*}
\sum_{u=1}^i (-1)^u\Gamma_{u,r}\boxtimes\sigma^{\boxtimes i-u}-\sum_{u=1}^i(-1)^u\det\sigma\boxtimes\Gamma_{u-1,r}\boxtimes\sigma^{\boxtimes i-u}=-\Gamma_{i,l}.
\end{align*}
Then (a) holds by induction on $i$.

Write $\Upsilon_{i}$ for $\sum_{u=1}^{i}(-1)^{u+1}(\det \sigma)^{\boxtimes i-u}\boxtimes \Gamma_{u,l}$. It is clear that $\Upsilon_{1}(W_1)=\Gamma_{1,l}(W_1)\subseteq W^{\oplus 2}_2$. Suppose $\Upsilon_{i-1}(W_u)\subseteq W^{\oplus2}_{u+1}$ for some $i\geq 2$. Since
$
\Upsilon_{i}=\det\sigma\otimes \Upsilon_{i-1}+(-1)^{i+1}\Gamma_{i,l}
$, $\Upsilon_{i}(W_i)\subseteq (V\otimes W_i)^{\oplus 2}.$

On the other hand, on obtains that $\Upsilon_{i}=\Gamma_{i,r}-(-1)^{u}\Upsilon_{i-1}\otimes \sigma
$ by rewriting \eqref{eq: Gamma r and l to l}. So 
$\Upsilon_{i}(W_i)\subseteq (W_i\otimes V)^{\oplus 2}.$ 
Then (b) follows by (a).

(c) follows by (b) and the definition of $\Gamma_{u,r}$.
\end{proof}

\begin{lemma}\label{lem: construction of upsilon} Let $\left(\{\delta_{i,r}\}, \{\delta_{i,l}\}\right)$ be a pair of linear maps for $\nu$ constructed as in Lemma \ref{lemma: construction of delta_r and delta_l}. Then there exist linear maps $$\upsilon_{i,r}:W_i\to W_{i+1}\otimes V$$ 
for all $i\geq 1$ such that the following diagram  commutes (without dotted arrows) and 
$
g_if_i=h_{i-1}\partial_i+\partial_{i+1}h_i
$ for all $i\geq 0:$
{\footnotesize
\begin{equation*}
\hspace{-3.7mm}\xymatrix{
Q_\centerdot(-2)=\ar[d]^{f_\centerdot}\\
Q_\centerdot(-1)^{\oplus2}=\ar[d]^{g_\centerdot}\\
Q_\centerdot=
}
\hspace{-2.6mm}\xymatrix@C=20px{
(\cdots\ar[r] &W_d\otimes B(-2) \ar[r]^{\partial_d}\ar[d]^{f_d}\ar@{-->}[ldd]^(0.6){h_{d}} &\cdots\ar[r]^(0.35){\partial_{2}} &W_1\otimes B(-2)\ar[r]^(0.55){\partial_{1}}\ar[d]^{f_1}\ar@{-->}[ldd]^(0.6){h_{1}} &B(-2)\ar@{-->}[ldd]^(0.6){h_0=0}\ar[d]^{f_0}\ar[r]&0\ar@{-->}[ldd]^(0.6){h_{-1}=0}\ar[r]\ar[d]^0&\cdots)\ar[r]^{\varepsilon_A\otimes_AB}&
\\
(\cdots\ar[r] & (W_d\otimes B(-1) )^{\oplus 2}\ar[r]^(0.65){{\partial^{\oplus 2}_d}}\ar[d]^{g_d} &\cdots\ar[r]^(0.3){{{\partial^{\oplus 2}_{2}}}} &(W_1\otimes B(-1))^{\oplus 2}\ar[r]^(0.5){{\partial^{\oplus 2}_{1}}} \ar[d]^{g_1}&B(-1)^{\oplus 2}\ar[d]^{g_0}\ar[r]&0\ar[r]\ar[d]^0&\cdots)\ar[r]^{(\varepsilon_A\otimes_AB)^{\oplus 2}}&
\\
(\cdots\ar[r] & W_d\otimes B\ar[r]^{\partial_d} &\cdots\ar[r]^(0.41){{\partial_{2}}} &W_1\otimes B\ar[r]^(0.6){{\partial_{1}}} &B\ar[r]&0\ar[r]&\cdots)\ar[r]^{\varepsilon_A\otimes_AB}&
}
\xymatrix{
\hspace{-3.7mm}B/A_{\geq1}B(-2)\ar[d]^{\mathbb{J}\bullet\lambda_Y}\\
\hspace{-3.7mm}(B/A_{\geq1}B(-1))^{\oplus2}\ar[d]^{\underline{\lambda_Y}}\\
\hspace{-3.7mm}B/A_{\geq1}B,
}
\end{equation*}
}
where for any $i\geq 0$, 
right graded $B$-module homomorphisms
\begin{align*}
g_i
&=  \underline{\sigma^{\boxtimes i}\boxtimes\lambda_Y}+({\id}_V^{\otimes i}\otimes m_B)
\underline{\delta_{i,r}\boxtimes \id_B},
\\
h_i&=\sum_{u=1}^i(-1)^{i+u}\Gamma_{u,r}\boxtimes\sigma^{\boxtimes i-u}\boxtimes\lambda_{Y}+({\id}_V^{\otimes i+1}\otimes m_B)(\upsilon_{i,r}\otimes {\id}_B)
\\
&=\sum_{u=1}^i(-1)^{u+1}({\det}\sigma)^{\boxtimes i-u}\boxtimes\Gamma_{u,l}\boxtimes\lambda_{Y}+({\id}_V^{\otimes i+1}\otimes m_B)(\upsilon_{i,r}\otimes {\id}_B).
\end{align*}

Moreover, 
 as graded linear maps from $W_i$ to $W_{i}\otimes B$ for any $i\geq 2$,
\begin{align*}
(\id_V^{\otimes i}\otimes m_B)(\upsilon_{i,r}+\upsilon_{i-1,r}\otimes\id_V)=(\id_V^{\otimes i}\otimes m_B)\left(\Delta_{i}+\sum_{u=1}^{i-1}(-1)^{u}({\det}\sigma)^{\boxtimes i-1-u}\boxtimes\Gamma_{u,l}\boxtimes\delta \right).
\end{align*}
\end{lemma}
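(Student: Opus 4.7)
The strategy is to identify $g_\bullet f_\bullet:Q_\bullet(-2)\to Q_\bullet$ as a lift of the zero endomorphism of $B/A_{\geq1}B$, hence null-homotopic by the Comparison Theorem, and to inductively cast the homotopy in the prescribed form, thereby defining $\upsilon_{i,r}$.

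I would first verify that $g_\bullet$ is a chain map lifting $\underline{\lambda_Y}:(B/A_{\geq1}B(-1))^{\oplus2}\to B/A_{\geq1}B$. Expanding the definition of $g_i$ and invoking associativity of $m_B$, the identity $\partial_ig_i=g_{i-1}\partial_i^{\oplus2}$ reduces to precisely the relation \eqref{eq: induction relations on delta_i,r} already established in Lemma \ref{lemma: construction of delta_r and delta_l}; the augmentation square commutes because $\nu(A)\subseteq A^{\oplus2}$. Since $\underline{\lambda_Y}\circ(\mathbb{J}\bullet\lambda_Y)$ sends $1$ to $y_2y_1-p_{12}y_1y_2-p_{11}y_1^2=0$ in $B$ (as $\kappa=0$), the composite $g_\bullet f_\bullet$ covers zero. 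The Comparison Theorem then produces right graded $B$-module homomorphisms $h_i:W_i\otimes B(-2)\to W_{i+1}\otimes B$ satisfying $g_if_i=h_{i-1}\partial_i+\partial_{i+1}h_i$ with $h_{-1}=0$; a direct computation gives $g_0f_0=0$, so $h_0=0$ initiates the induction, consistent with $\upsilon_{0,r}=0$.

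To place $h_i$ in the prescribed form for $i\geq 1$, I write $\alpha_i=\sigma^{\boxtimes i}\boxtimes\lambda_Y$ and $\beta_i=(\id_V^{\otimes i}\otimes m_B)^{\oplus2}(\delta_{i,r}\boxtimes\id_B)$, so that $f_i=\mathbb{J}\bullet(\alpha_i+\beta_i)$ and $g_i=\underline{\alpha_i+\beta_i}$, and expand $g_if_i$ into four pieces. The pure $\alpha\alpha$-piece vanishes by Lemma \ref{lem: relation between Theta^{(i)}}(c). The mixed pieces $\underline{\alpha_i}(\mathbb{J}\bullet\beta_i)+\underline{\beta_i}(\mathbb{J}\bullet\alpha_i)$ assemble, by the very definition of $\Gamma_{i,r}$, into the $u=i$ summand of $\partial_{i+1}\bigl(\sum_{u=1}^i(-1)^{i+u}\Gamma_{u,r}\boxtimes\sigma^{\boxtimes i-u}\boxtimes\lambda_Y\bigr)$; the lower ($u<i$) summands are produced by $h_{i-1}\partial_i$ through the inductive shape of $h_{i-1}$, after invoking \eqref{eq: induction relations on delta_i,r} and the telescoping identities of Lemma \ref{lem: realtions between Gamma}. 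Membership of the telescoping sum in $W_{i+1}\otimes B$ is precisely Lemma \ref{lem: realtions between Gamma}(b), making the $\Gamma$-half of $h_i$ a bona fide map into $W_{i+1}\otimes B$. The residual $\beta\beta$-piece $\underline{\beta_i}(\mathbb{J}\bullet\beta_i)$ evaluates, by the definition of $\Delta_i$, to $\Delta_i$ followed by the appropriate $m_B$-collapse. Matching this residue, together with the $\upsilon_{i-1,r}\boxtimes\id_V$-contribution produced by $h_{i-1}\partial_i$, against $\partial_{i+1}\bigl((\id_V^{\otimes i+1}\otimes m_B)(\upsilon_{i,r}\otimes\id_B)\bigr)$ yields
\[(\id_V^{\otimes i}\otimes m_B)(\upsilon_{i,r}+\upsilon_{i-1,r}\otimes\id_V)=(\id_V^{\otimes i}\otimes m_B)\Bigl(\Delta_i+\sum_{u=1}^{i-1}(-1)^{u}(\det\sigma)^{\boxtimes i-1-u}\boxtimes\Gamma_{u,l}\boxtimes\delta\Bigr),\]
which simultaneously defines $\upsilon_{i,r}:W_i\to W_{i+1}\otimes V$ and records the asserted recursion. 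The equivalence of the two displayed formulas for $h_i$ then follows directly from Lemma \ref{lem: realtions between Gamma}(a).

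The principal obstacle is the bookkeeping in the inductive step: one must carefully track how the $\sigma$-$\delta$ cross terms of $g_if_i$ and the $\Gamma_{u,r}$-contributions arising from $h_{i-1}\partial_i$ coalesce, via \eqref{eq: induction relations on delta_i,r} and Lemma \ref{lem: realtions between Gamma}, into a single $\partial_{i+1}$ applied to the telescoping $\Gamma$-sum, leaving a residue that genuinely factors through $W_{i+1}\otimes V\otimes B$ rather than merely $V^{\otimes i+1}\otimes V\otimes B$. This containment rests on Lemma \ref{lem: realtions between Gamma}(b) for the telescope and on the fact that $\delta_{i,r}$ has range in $W_i\otimes V$, so that $\beta\beta$-compositions remain inside $W_i\otimes V^{\otimes 2}$, enabling the extraction of $\upsilon_{i,r}$ with the required codomain.
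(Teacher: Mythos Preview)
Your plan is essentially the paper's proof: verify $g_\bullet$ is a chain map via \eqref{eq: induction relations on delta_i,r}, invoke the Comparison Theorem to get null-homotopy $h_\bullet$, compute $g_if_i$ explicitly, and inductively cast $h_i$ in the stated $\Gamma$-plus-$\upsilon$ form using Lemma \ref{lem: realtions between Gamma}. Two small imprecisions are worth flagging. First, the mixed piece $\underline{\alpha_i}(\mathbb{J}\bullet\beta_i)$ does not feed only $\Gamma_{i,r}$: when $\lambda_Y$ hits the $V$-factor coming from $\delta_{i,r}$, the relation $y_sv=\sum_t\sigma_{st}(v)y_t+\delta_s(v)$ in $B$ splits it into a $\sigma^{\boxtimes i+1}$-part (to $\Gamma_{i,r}$) and a $\sigma^{\boxtimes i}\boxtimes\delta$-part (to $\Delta_i$), which is why $\Delta_i$ is defined as $\underline{\sigma^{\boxtimes i}\boxtimes\delta+\delta_{i,r}\boxtimes\id_V}(\mathbb{J}\bullet\delta_{i,r})$ rather than just the $\beta\beta$-term. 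Second, the recursion you display does not by itself pin down $\upsilon_{i,r}$ with codomain $W_{i+1}\otimes V$; the paper instead takes $h_i$ (already landing in $W_{i+1}\otimes B$ by the Comparison Theorem), argues by degree that $h_i|_{W_i}\in W_{i+1}\otimes(ky_1\oplus ky_2\oplus V)$, observes that the $\Gamma$-sum accounts for the $y$-components, and then \emph{defines} $\upsilon_{i,r}$ as the residual $V$-component, with the recursion following as a consequence.
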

\begin{proof}
By Lemma \ref{lemma: construction of delta_r and delta_l}, the top two rows are commutative. It is not hard to obtain $\underline{\lambda_Y}(\varepsilon_A\otimes_A B)^{\oplus 2}=(\varepsilon_A\otimes_A B)g_0$. For all $i\geq 1$,
\begin{align*} 
\partial_ig_i&=({\id}_V^{\otimes i-1}\otimes m_B)\left(\underline{\sigma^{\boxtimes i}\boxtimes\lambda_Y}+({\id}_V^{\otimes i}\otimes m_B)\left(\underline{\delta_{i,r}\boxtimes{\id}_B}\right)\right)\\
&=({\id}_V^{\otimes i-1}\otimes m_B)\left(\underline{\sigma^{\boxtimes i}\boxtimes\lambda_Y}+\underline{\delta_{i,r}\boxtimes{\id}_B}\right)\\
&=({\id}_V^{\otimes i-1}\otimes m_B)\left(\underline{\sigma^{\boxtimes i}\boxtimes\lambda_Y}+\left(\underline{\sigma^{\boxtimes i-1}\boxtimes \delta+\delta_{i-1,r} \boxtimes {\id}_V}\right)\otimes{\id}_B\right)\\
&=\left(\underline{\sigma^{\boxtimes i-1}\boxtimes\lambda_Y}+(\id^{\otimes i-1}_V\otimes m_B)(\underline{\delta_{i-1,r} \boxtimes {\id}_V})\right)({\id}_V^{\otimes i-1}\otimes m_B)^{\oplus 2}\\
&=g_{i-1}\partial_i^{\oplus2},
\end{align*}
where the third equality is by \eqref{eq: induction relations on delta_i,r}. Hence, the bottom two rows commute.

Since $\underline{\lambda_Y}(\mathbb{J}\bullet\lambda_Y)$ is a zero map from $B/A_{\geq 1}B(-2)$ to $B/A_{\geq 1}B$, $\{g_if_i\}$ is null homotopic and there exist right graded $B$-module homomorphisms $h_i:W_i\otimes B(-2)\to W_{i+1}\otimes B$ such that $g_if_i=h_{i-1}\partial_i+\partial_{i+1}h_i$ for all $i\geq 0$.

Firstly, we have
\begin{align*}
g_if_i=&\left(
 \underline{\sigma^{\boxtimes i}\boxtimes\lambda_Y}+({\id}_V^{\otimes i}\otimes m_B)
\underline{\delta_{i,r}\boxtimes \id_B}
\right)\left(
\mathbb{J}\bullet\left(\sigma^{\boxtimes i}\boxtimes\lambda_Y+({\id}_V^{\otimes i}\otimes m_B)^{\oplus2}(\delta_{i,r}\boxtimes \id_B)\right)
\right)\\
=&
\underline{\sigma^{\boxtimes i}\boxtimes\lambda_Y}
 \left(\mathbb{J}\bullet
 (
 \sigma^{\boxtimes i}\boxtimes\lambda_Y
 )\right)
 +({\id}_V^{\otimes i}\otimes m_B)
\underline{\delta_{i,r}\boxtimes \id_B}\left(\mathbb{J}\bullet
({\id}_V^{\otimes i}\otimes m_B)^{\oplus2}(\delta_{i,r}\boxtimes \id_B)\right)\\
&
+
 \underline{\sigma^{\boxtimes i}\boxtimes\lambda_Y}\left(\mathbb{J}\bullet({\id}_V^{\otimes i}\otimes m_B)^{\oplus2}(\delta_{i,r}\boxtimes \id_B)\right)
+
({\id}_V^{\otimes i}\otimes m_B)
\underline{\delta_{i,r}\boxtimes \id_B}\left(\mathbb{J}\bullet
(\sigma^{\boxtimes i}\boxtimes\lambda_Y)
\right)\\
=&
 ({\id}_V^{\otimes i}\otimes m_B)
\left(\left(
\underline{\delta_{i,r}\boxtimes \id_V}\left(\mathbb{J}\bullet
\delta_{i,r}\right)\right)\otimes \id_B)
\right)
+
({\id}_V^{\otimes i}\otimes m_B)
\underline{\delta_{i,r}\boxtimes \id_B}\left(\mathbb{J}\bullet
(\sigma^{\boxtimes i}\boxtimes\lambda_Y)
\right)\\
&
+
 ({\id}_V^{\otimes i}\otimes m_B)\underline{\sigma^{\boxtimes i+1}\boxtimes\lambda_Y}\left(\mathbb{J}\bullet(\delta_{i,r}\boxtimes \id_B)\right)
+
 ({\id}_V^{\otimes i}\otimes m_B)\left(\left(\underline{\sigma^{\boxtimes i}\boxtimes\delta}\left(\mathbb{J}\bullet\delta_{i,r}\right)\right)\otimes \id_B\right)
\\
=&(\id_V^{\otimes i}\otimes m_B)\left(\Gamma_{i,r}\boxtimes \lambda_{Y}+\Delta_{i}\otimes \id_B\right),
\end{align*}
where the third equality holds by Lemma \ref{lem: relation between Theta^{(i)}}. Then we construct $\{\upsilon_{i,r}\}_{i\geq 1}$ inductively.

\begin{enumerate}
    \item[$i=0$.] Clearly, $g_0f_0=0$ and so we can choose $h_0=0$. 
    \item[$i=1$.] It is clear that $ \Delta_{1}=\underline{\delta}(\mathbb{J}\bullet\delta)=\upsilon_{1,r}+\upsilon_{1,l}$ and $\Gamma_{1,r}=\Gamma_{1,l}$. So, 
    \begin{align*}
       \hspace{6mm} g_1f_1=(\id_V\otimes m_B)\left(\Gamma_{1,r}\boxtimes \lambda_{Y}+\Delta_{1}\otimes \id_B\right)=(\id_V\otimes m_B)\left(\Gamma_{1,l}\boxtimes \lambda_{Y}+ (\id_V^{\otimes 2}\otimes m_B)(\upsilon_{1,r}\otimes \id_B)\right).
    \end{align*}
    Since $\left(\Gamma_{1,l}\boxtimes \lambda_{Y}+ (\id_V^{\otimes 2}\otimes m_B)\left(\upsilon_{1,r}\otimes \id_B\right)\right)\left(W_1\otimes B(-2)\right)\subseteq W_2\otimes B$ by Lemma \ref{lem: realtions between Gamma} and the construction of $\upsilon_{1,r}$, we can choose 
    $$h_1=\Gamma_{1,l}\boxtimes \lambda_{Y}+ (\id_V^{\otimes 2}\otimes m_B)(\upsilon_{1,r}\otimes \id_B).$$
    \item[$i\geq 2$.] Suppose the desired linear maps $\upsilon_{s,r}$ are obtained for all $s<i$. Then
\begin{align*}
h_{i-1}\partial_{i}=&\left(\sum_{u=1}^{i-1}(-1)^{u+1}({\det}\sigma)^{\boxtimes i-1-u}\boxtimes\Gamma_{u,l}\boxtimes\lambda_{Y}+({\id}_V^{\otimes i}\otimes m_B)(\upsilon_{i-1,r}\otimes {\id}_B)\right)(\id_V^{\otimes i-1}\otimes m_B)\\
=&({\id}_V^{\otimes i}\otimes m_B)\left(\sum_{u=1}^{i-1}(-1)^{u+1}({\det}\sigma)^{\boxtimes i-1-u}\boxtimes\Gamma_{u,l}\boxtimes\sigma\boxtimes \lambda_{Y}\right)\\
&+({\id}_V^{\otimes i}\otimes m_B)\left(\sum_{u=1}^{i-1}(-1)^{u+1}({\det}\sigma)^{\boxtimes i-1-u}\boxtimes\Gamma_{u,l}\boxtimes\delta\boxtimes\id_B+\upsilon_{i-1,r}\otimes\id_V \otimes {\id}_B\right).
\end{align*}
By Lemma \ref{lem: realtions between Gamma}, one obtains that     
     \begin{equation}\label{eq: gifi-hi-1partiali}
        \begin{aligned}
         g_if_i-h_{i-1}\partial_i=&(\id_V^{\otimes i}\otimes m_B)\sum_{u=1}^i(-1)^{u+1}(\det\sigma)^{\boxtimes i-u}\boxtimes\Gamma_{u,l}\boxtimes \lambda_{Y}\\
            &+({\id}_V^{\otimes i}\otimes m_B)\left(\left(\Delta_i+\sum_{u=1}^{i-1}(-1)^{u}({\det}\sigma)^{\boxtimes i-1-u}\boxtimes\Gamma_{u,l}\boxtimes\delta+\upsilon_{i-1,r}\otimes\id_V \right)\otimes \id_B\right)\\
            &=\partial_{i+1}h_{i}=(\id_V^{\otimes i}\otimes m_B)h_i.
        \end{aligned}
     \end{equation}

By an argument on the grading, ${h_i}(W_i\otimes B_0)\subseteq W_{i+1}\otimes y_1+W_{i+1}\otimes y_2+W_{i+1}\otimes V$. However, 
$$
\left(\left(\Delta_i+\sum_{u=1}^{i-1}(-1)^{u}({\det}\sigma)^{\boxtimes i-1-u}\boxtimes\Gamma_{u,l}\boxtimes\delta+\upsilon_{i-1,r}\otimes\id_V \right)\otimes \id_B\right)(W_i\otimes B_0)\subseteq W_i\otimes V^{\otimes 2}\otimes B_0,
$$
by Lemma \ref{lem: realtions between Gamma}. It implies that
$$
\left(h_i-\sum_{u=1}^i(-1)^{u+1}(\det\sigma)^{\boxtimes i-u}\boxtimes\Gamma_{u,l}\boxtimes \lambda_{Y}\right)(W_i\otimes B_0)\subseteq W_{i+1}\otimes V,
$$
by \eqref{eq: gifi-hi-1partiali}. Write $\upsilon_{i,r}$ for the restricted linear map $\left(h_i-\sum_{u=1}^i(-1)^{u+1}(\det\sigma)^{\boxtimes i-u}\boxtimes\Gamma_{u,l}\boxtimes \lambda_{Y}\right){\mid_{W_i}}$, and then
$$
h_i=\sum_{u=1}^i(-1)^{u+1}(\det\sigma)^{\boxtimes i-u}\boxtimes\Gamma_{u,l}\boxtimes \lambda_{Y}+(\id_V^{\otimes i+1}\otimes m_B)(\upsilon_{i,r}\otimes \id_B).
$$
\end{enumerate}

Another representation for $h_i$ comes from Lemma \ref{lem: realtions between Gamma}(a).  The last assertion follows by \eqref{eq: gifi-hi-1partiali} immediately.
\end{proof}

By now, we have constructed a triple  $(\{\delta_{i,r}\}, \{\delta_{i,l}\},\{\upsilon_{i,r}\})$ for the $\varsigma$-derivation $\nu$. Then we use them to obtain a minimal free resolution of the trivial module $k_B$ of the graded double Ore extension $B$.

\begin{theorem}\label{thm: minimial free resolution}
Let $B=A_P[y_1,y_2;\varsigma,\nu]$ be a graded double Ore extension of a Koszul algebra  $A=T(V)/(R)$ with $p_{12}\neq 0$. Let $(\{\delta_{i,r}\}, \{\delta_{i,l}\},\{\upsilon_{i,r}\})$ be a triple for $\nu$ constructed as in Lemma \ref{lemma: construction of delta_r and delta_l} and Lemma \ref{lem: construction of upsilon}. Then the following complex $F_\centerdot$ is a minimal free resolution of the trivial module $k_B$:
{\footnotesize
\begin{equation*}
    \begin{aligned}
&\cdots
\xlongrightarrow{}
W_{d}\otimes B(-2)\oplus (W_{d+1}\otimes B(-1))^{\oplus2}\oplus W_{d+2}\otimes B
\xlongrightarrow
{\left(
\begin{array}{ccc}
\partial_{d} & 0 &0  \\
-f_{d}&  -\partial_{d+1}^{\oplus2}&0\\
h_{d}&   g_{d+1}&\partial_{d+2}
\end{array}
\right)}
\cdots\cdots\\
&\quad\xlongrightarrow{}
W_1\otimes B(-2)\oplus (W_{2}\otimes B(-1))^{\oplus2}\oplus W_3\otimes B 
\xlongrightarrow{
\left(
\begin{array}{ccc}
\partial_{1}& 0&  0\\ 
-f_1& -\partial_{2}^{\oplus2}&  0\\
h_1 &   g_2&   \partial_3
\end{array}
\right)
}
B(-2)\oplus (W_1\otimes B(-1))^{\oplus2}\oplus W_2\otimes B\\
&\qquad\qquad\qquad\qquad\xlongrightarrow{
\left(
\begin{array}{ccc}
-f_0&  -\partial_1^{\oplus2}&  0\\
0&   g_1&  \partial_2
\end{array}
\right)
}
B(-1)^{\oplus2}\oplus W_1\otimes B
\xlongrightarrow{
\left(
\begin{array}{cc}
g_0& \partial_1
\end{array}
\right)
} 
B\xlongrightarrow{} 0\to \cdots.
\end{aligned}
\end{equation*}
}
\end{theorem}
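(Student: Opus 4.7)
The plan is to recognize $F_\centerdot$ as an iterated mapping cone of the three resolutions $Q_\centerdot(-2) \xrightarrow{-f_\centerdot} Q_\centerdot(-1)^{\oplus 2} \xrightarrow{g_\centerdot} Q_\centerdot$ linked by the homotopy $h_\centerdot$, and to compute its homology via the Koszul resolution of the $2$-dimensional Koszul AS-regular algebra $J = k\langle y_1,y_2\rangle/(y_2y_1 - p_{12}y_1y_2 - p_{11}y_1^2)$. The crucial inputs are already in place: $Q_\centerdot = (\text{Koszul resolution of } k_A) \otimes_A B$ is a free $B$-resolution of $J = B/A_{\geq 1}B$ because $B$ is free as a left $A$-module; $f_\centerdot$ and $g_\centerdot$ are chain maps lifting $\mathbb{J}\bullet\lambda_Y$ and $\underline{\lambda_Y}$ respectively; and $g_\centerdot f_\centerdot$ is null-homotopic via $h_\centerdot$.

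First I would check $d^2 = 0$ by a block-matrix computation, which reduces on each summand to $\partial^2 = 0$, to the chain-map identities $\partial^{\oplus 2}_\centerdot f_\centerdot = f_\centerdot\partial_\centerdot$ and $\partial_\centerdot g_\centerdot = g_\centerdot\partial^{\oplus 2}_\centerdot$ (Lemmas \ref{lemma: construction of delta_r and delta_l} and \ref{lem: construction of upsilon}), and to the homotopy identity $g_\centerdot f_\centerdot = \partial_{\centerdot+1} h_\centerdot + h_{\centerdot-1}\partial_\centerdot$ (Lemma \ref{lem: construction of upsilon}). Next I would filter $F_\centerdot$ by ``column index'', letting $F^0 \subseteq F^1 \subseteq F^2 = F$ be the subcomplexes consisting of only the $Q_\centerdot$ piece, of $Q_\centerdot$ together with $Q_\centerdot(-1)^{\oplus 2}$, and of everything. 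The spectral sequence has $E_1^{p,q} = H_q(Q_\centerdot(-p))$, concentrated in the row $q = 0$ with values $J, J(-1)^{\oplus 2}, J(-2)$; crucially $h_\centerdot$ jumps two columns and so contributes only to $d_2$, whereas $d_1$ is built from $g_\centerdot$ and $-f_\centerdot$ and recovers
\[
0 \to J(-2) \xrightarrow{-\mathbb{J}\bullet\lambda_Y} J(-1)^{\oplus 2} \xrightarrow{\underline{\lambda_Y}} J \to 0.
\]
Since $J$ is Koszul AS-regular of dimension $2$, this is exact except at the rightmost $J$, where its cokernel is $k_J$. Therefore $E_2$ is concentrated at the origin with value $k_J = k_B$ (the latter identification being through $p_J: B\twoheadrightarrow J$), the spectral sequence collapses, and $F_\centerdot$ is acyclic above degree $0$ with $H_0(F_\centerdot) = B/B_{\geq 1} = k_B$, matching the canonical augmentation $\varepsilon_B$.

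Finally, minimality is a term-by-term check: the restriction of each of $\partial_i, f_i, g_i, h_i$ to the generating subspaces $W_\centerdot \otimes B_0$ takes values whose $B$-factor sits in $V + ky_1 + ky_2 = B_1 \subseteq B_{\geq 1}$, a fact immediate from the explicit formulas involving $\lambda_Y$, $\delta$, $m_B$, $\Gamma_{u,\star}$ and $\upsilon_{i,r}$. The main obstacle I expect is making the spectral sequence rigorous given that $g_\centerdot f_\centerdot$ is only null-homotopic (not zero); concretely one must recast $F_\centerdot$ as the mapping cone of the chain map $(h_{\centerdot-1}, -f_{\centerdot-1}): Q_\centerdot(-2)[1] \to \mathrm{cone}(g_\centerdot)$ so that the filtration argument or, equivalently, the iterated long exact sequence of a cone applies cleanly.
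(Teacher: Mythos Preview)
Your proposal is correct and follows essentially the same iterated mapping cone idea as the paper. The only real difference is packaging: the paper takes the cones in the order
\[
D_\centerdot=\mathrm{Cone}\bigl(Q_\centerdot(-2)\xrightarrow{f_\centerdot}Q_\centerdot(-1)^{\oplus 2}\bigr),\qquad F_\centerdot=\mathrm{Cone}\bigl(D_\centerdot\xrightarrow{(h_\centerdot[-1],\,g_\centerdot)}Q_\centerdot\bigr),
\]
and then reads off the homology directly from the two short exact sequences $0\to J(-2)\to J(-1)^{\oplus2}\to J^{\oplus2}(-1)/\Image(\mathbb J\bullet\lambda_Y)\to 0$ and $0\to J^{\oplus2}(-1)/\Image(\mathbb J\bullet\lambda_Y)\to J\to k_B\to 0$, which together are just the Koszul resolution of $k_J$ broken in two. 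Your three-step filtration and spectral sequence reproduce exactly the same computation; taking cones in the paper's order also neatly sidesteps the ``$g_\centerdot f_\centerdot$ only null-homotopic'' issue you flagged, since $(h_\centerdot[-1],g_\centerdot)$ is an honest chain map on $D_\centerdot$ precisely because of the homotopy identity.
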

\begin{proof}
Recall that $B/A_{\geq 1}B\cong J$ as right graded $B$-modules. By the construction of $f_\centerdot$  in Lemma \ref{lemma: construction of delta_r and delta_l}, the mapping cone $$D_\centerdot=\mathrm{Cone}\left(Q_\centerdot(-2)\xlongrightarrow{f_\centerdot}Q^{\oplus 2}_\centerdot(-1)\right)$$ 
is a free resolution of $J^{\oplus 2}(-1)/\Image (\mathbb{J}\bullet\lambda_Y)$. By Lemma \ref{lem: construction of upsilon}, one obtains a chain homomorphism 
$$
D_\centerdot\xlongrightarrow{(h_\centerdot[-1],g_\centerdot)}Q_\centerdot,$$ 
and it is also induced by the right graded $B$-module homomorphism $\underline{\lambda_Y}:J^{\oplus 2}(-1)/\Image (\mathbb{J}\bullet \lambda_Y)\to J$. The complex $F_\centerdot$ is exactly the mapping cone of $(h_\centerdot[-1],g_\centerdot)$. Then the results follows by the following commutative diagram
$$
\xymatrix{
0\ar[r]& D_\centerdot\ar[r]^{(h_\centerdot[-1],g_\centerdot)}\ar[d]^{\simeq}&Q_\centerdot\ar[r]\ar[d]^{\simeq}  & F_\centerdot\ar[r]\ar[d]^{\simeq} &0\\
0\ar[r]& J^{\oplus 2}(-1)/\Image(\mathbb{J}\bullet\lambda_Y)\ar[r]^(0.7){\underline{\lambda_Y}}&J\ar[r]  & k_B\ar[r] &0.
}
$$
\end{proof}

The minimal free resolution also implies that the graded double Ore extension $B=A_P[y_1,y_2;\varsigma,\nu]$ with nontrivial $\nu$  preserves Koszulity. It is a generalization of \cite[Theorem 1]{ZVZ}. 
\begin{corollary}\label{cor: double ore extension preserve Koszul}
Let $B=A_P[y_1,y_2;\varsigma,\nu]$ be a graded double Ore extension of a Koszul algebra  $A=T(V)/(R)$. Then $B$ is Koszul.
\end{corollary}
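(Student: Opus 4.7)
The plan is to invoke Theorem \ref{thm: minimial free resolution} directly, which furnishes an explicit minimal graded free resolution $F_\centerdot$ of $k_B$. By the definition of Koszulity recalled at the start of Section \ref{Section preliminaries}, it then suffices to check that the free module $F_n$ appearing in homological degree $n$ of $F_\centerdot$ is generated in internal degree $n$.

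The verification is by direct inspection of the shape of each $F_n$. For $n=0$, $F_0 = B$ is generated in degree $0$. For $n = 1$, $F_1 = B(-1)^{\oplus 2} \oplus W_1 \otimes B$ and both summands are generated in degree $1$ since $W_1 = V$ is concentrated in degree $1$. For $n = 2$, the summands $B(-2)$, $(W_1 \otimes B(-1))^{\oplus 2}$ and $W_2 \otimes B$ are all generated in degree $2$, because $W_2 \subseteq V^{\otimes 2}$ sits in degree $2$. For $n \geq 3$, the summands $W_{n-2} \otimes B(-2)$, $(W_{n-1} \otimes B(-1))^{\oplus 2}$ and $W_n \otimes B$ are each generated in degree $n$, since $W_i \subseteq V^{\otimes i}$ is concentrated in internal degree $i$ and the shifts $(-2)$, $(-1)$, $0$ match up with $n-2$, $n-1$, $n$.

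There is essentially no obstacle: once Theorem \ref{thm: minimial free resolution} is available, the corollary is an immediate consequence. The only point worth double-checking is that the resolution is genuinely minimal. This reduces to showing that every block of the differential carries $F_n \otimes B_0$ into $F_{n-1}\cdot B_{\geq 1}$, which is visible directly from the formulas in Lemmas \ref{lemma: construction of delta_r and delta_l} and \ref{lem: construction of upsilon}: the Koszul piece $\partial_i$ lands in $W_{i-1}\otimes V$; both summands of $f_i$ land in $(W_i \otimes B_1)^{\oplus 2}$ since $\lambda_Y$ outputs $y_1,y_2 \in B_1$ and $\delta_{i,r}$ followed by multiplication outputs elements of $W_i \otimes V$; $g_i$ behaves analogously; and $h_i$ involves only $\lambda_Y$ and $(\id_V^{\otimes i+1} \otimes m_B)(\upsilon_{i,r}\otimes \id_B)$, both producing factors in $B_{\geq 1}$.
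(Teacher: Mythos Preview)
Your proof is correct and follows exactly the approach the paper intends: the corollary is stated immediately after Theorem \ref{thm: minimial free resolution} with no separate proof, the text simply noting that the minimal free resolution ``implies that $B=A_P[y_1,y_2;\varsigma,\nu]$ with nontrivial $\nu$ preserves Koszulity.'' Your explicit degree check on each $F_n$ and your verification that every block of the differential lands in $F_{n-1}\cdot B_{\geq 1}$ make the implicit reasoning fully explicit, and in fact supply the minimality justification that the proof of Theorem \ref{thm: minimial free resolution} itself leaves to the reader.
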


\begin{lemma}\label{lem: construction of upsilon l} Let $B=A_P[y_1,y_2;\varsigma,\nu]$ be a graded double Ore extension of a Koszul algebra $A=T(V)/(R)$ with $p_{12}\neq 0$. Let $(\{\delta_{i,r}\}, \{\delta_{i,l}\},\{\upsilon_{i,r}\})$ be a triple for $\nu$ constructed as in Lemma \ref{lemma: construction of delta_r and delta_l} and Lemma \ref{lem: construction of upsilon}. 
Then there exist linear maps 
$$\upsilon_{i,l}:W_i\to V\otimes W_{i+1}$$ 
such that for any $i\geq 1$,
$$\upsilon_{i,l}+\upsilon_{i-1,l}\otimes{\id}_V=(-1)^i(\upsilon_{i,r}-\det\sigma\otimes \upsilon_{i-1,r})+\underline{\sigma\boxtimes \delta_{i,r}} \left(\mathbb{J}\bullet \delta_{i,l}\right)+\underline{\delta_{i,l}\boxtimes\id_V}\left(\mathbb{J}\bullet\delta_{{i},r}\right).$$
\end{lemma}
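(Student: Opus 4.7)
The plan is to mirror the construction of $\delta_{i,l}$ in Lemma~\ref{lemma: construction of delta_r and delta_l}(b) and of $\upsilon_{i,r}$ in Lemma~\ref{lem: construction of upsilon}: namely, obtain $\upsilon_{i,l}$ by unwinding a null-homotopy on the Koszul resolution $Q_\centerdot$. The right-handed homotopy $h_\centerdot$ constructed in Lemma~\ref{lem: construction of upsilon} witnesses the nullity of $g_\centerdot f_\centerdot$ using the data $\delta_{i,r}$ and $\upsilon_{i,r}$. Performing the symmetric procedure but inserting $\delta_{i,l}$ in place of $\delta_{i,r}$ should produce a second null-homotopy $\tilde h_\centerdot$ of the same map. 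The chain-map difference $h_\centerdot-\tilde h_\centerdot$ lifts uniquely up to homotopy, and restricting its degree-zero component on $W_i\otimes B_0$ should give exactly a map $W_i\to V\otimes W_{i+1}$; this is the desired $\upsilon_{i,l}$.

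Concretely, I would proceed by induction on $i$, starting from $\upsilon_{0,l}=0$, and \emph{define}
\[
\upsilon_{i,l}:=(-1)^i\bigl(\upsilon_{i,r}-\det\sigma\otimes\upsilon_{i-1,r}\bigr)+\underline{\sigma\boxtimes\delta_{i,r}}\bigl(\mathbb{J}\bullet\delta_{i,l}\bigr)+\underline{\delta_{i,l}\boxtimes\id_V}\bigl(\mathbb{J}\bullet\delta_{i,r}\bigr)-\upsilon_{i-1,l}\otimes\id_V.
\]
A~priori this is only a map $W_i\to V^{\otimes i+2}$; the content of the lemma is that the image actually lies in $V\otimes W_{i+1}$. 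To prove this, I would apply each of the Koszul projections $\id_V^{\otimes s}\otimes \pi_{V^{\otimes 2}/R}\otimes \id_V^{\otimes i-s}$ and check that every component vanishes. The ingredients available are the recursive relation \eqref{eq: induction relations on delta_i,r}, the swap formula in Lemma~\ref{lemma: construction of delta_r and delta_l}(b), the matrix identities \eqref{eq: Theta}--\eqref{eq: Nu} for $\varsigma$ and $\nu$ (which restrict to the corresponding identities for $\sigma$ and $\delta$), Lemma~\ref{lem: relation between Theta^{(i)}}, and the Koszulity identity for $\upsilon_{i,r}$ at the end of Lemma~\ref{lem: construction of upsilon}. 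The inductive hypothesis supplies the compatible statement that $\upsilon_{i-1,l}\otimes\id_V$ sends $W_i\subseteq W_{i-1}\otimes V$ into $V\otimes W_i\otimes V$.

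The main obstacle is the combinatorial bookkeeping: every summand in the defining formula is a triple-tensor expression in which $\mathbb{J}$ acts sandwiched between $\sigma^{\boxtimes k}$, $\delta_{\bullet,\star}$ and $\id$, so one has to track carefully the interaction of $\boxtimes$ and $\bullet$, using Remark~\ref{rem: tensor is not unusual}(b) to commute diagonal factors. The subtle cancellation is that the obstruction to landing in $V\otimes W_{i+1}$ coming from the cross terms $\underline{\sigma\boxtimes\delta_{i,r}}(\mathbb{J}\bullet\delta_{i,l})$ and $\underline{\delta_{i,l}\boxtimes\id_V}(\mathbb{J}\bullet\delta_{i,r})$ is matched precisely by the correction $(-1)^i\det\sigma\otimes\upsilon_{i-1,r}$, which is what forces $\mathbb{J}$ and the determinant $\det\sigma$ to make their joint appearance. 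The underlying structural reason is the two distinct Koszul factorizations $W_{i+1}\subseteq W_i\otimes V$ and $W_{i+1}\subseteq V\otimes W_i$, and the identity we must verify is exactly the switching rule between the right- and left-handed presentations of the same null-homotopy.
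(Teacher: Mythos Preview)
Your instinct to mirror Lemma~\ref{lemma: construction of delta_r and delta_l}(b) is correct, and that is exactly what the paper does. But your first paragraph mis-identifies \emph{which} null-homotopy to take. You propose a second null-homotopy $\tilde h_\centerdot$ of the same composite $g_\centerdot f_\centerdot$ and then compare $h_\centerdot-\tilde h_\centerdot$; this is muddled, since ``inserting $\delta_{i,l}$ in place of $\delta_{i,r}$'' is not a well-defined prescription (the maps $g_i,f_i$ are already fixed via $\delta_{i,r}$), and in any case the difference of two null-homotopies of $g_\centerdot f_\centerdot$ lands in $W_{i+1}\otimes B$, not $V\otimes W_{i+1}\otimes B$. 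The paper instead packages the right-hand side of the asserted identity into a \emph{new} chain map
\[
\Phi'_i=(\id_V^{\otimes i+1}\otimes m_B)\Bigl(\bigl((-1)^i\upsilon_{i,r}-(-1)^i\det\sigma\otimes\upsilon_{i-1,r}+\underline{\sigma\boxtimes\delta_{i,r}}(\mathbb{J}\bullet\delta_{i,l})+\underline{\delta_{i,l}\boxtimes\id_V}(\mathbb{J}\bullet\delta_{i,r})\bigr)\otimes\id_B\Bigr)
\]
from a truncation of $Q_\centerdot(-2)$ to a truncation of $V\otimes Q_\centerdot$, verifies directly that $(\id_V\otimes\partial_i)\Phi'_i=\Phi'_{i-1}\partial_i$ (this is the long computation using Lemmas~\ref{lem: relation between Theta^{(i)}}, \ref{lemma: construction of delta_r and delta_l}, \ref{lem: construction of upsilon}), observes that $\Phi'$ is therefore null-homotopic, and takes $\upsilon_{i,l}:=s'_i|_{W_i}$. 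The point is that the homotopies $s'_i$ land in $V\otimes W_{i+1}\otimes B$ \emph{by construction}, so the codomain comes for free; the homotopy relation restricted to $W_i$ is then exactly the formula in the statement.

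Your second paragraph's alternative---define $\upsilon_{i,l}$ by the formula and verify directly that the image lies in $V\otimes W_{i+1}$---is a genuinely different organizational scheme and would work, but you have not identified the actual check. Since every summand already lies in $V\otimes W_i\otimes V$, the only nontrivial projection is $s=i$, i.e.\ you must show the image lies in $V^{\otimes i}\otimes R$. That single check is equivalent in content to the paper's chain-map verification $(\id_V\otimes\partial_i)\Phi'_i=\Phi'_{i-1}\partial_i$, and requires the same unwinding of the recursion for $\upsilon_{i,r}$ at the end of Lemma~\ref{lem: construction of upsilon} together with Lemma~\ref{lemma: construction of delta_r and delta_l}(b) and Lemma~\ref{lem: relation between Theta^{(i)}}(a); the paper's route simply hides the induction inside the Comparison Theorem.
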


\begin{proof}
One can obtain the following commutative diagram (without dotted arrows),
{
$$
\xymatrix{
\hspace{-1mm}\cdots\ar[r]&W_{d}\otimes B(-2)\ar[r]^{\partial_{d}}\ar[d]^{\Phi'_{d}}\ar@{-->}[ld]_{s'_{d}}  &\cdots\ar[r]^(0.3){} &W_2\otimes B(-2)\ar[r]^(0.5){{\partial_{2}}} \ar[d]^{\Phi'_2}\ar@{-->}[ld]_{s'_2}&W_1\otimes B(-2)\ar[r]^(0.55){{{\partial_1}}}\ar[d]^{\Phi'_1}\ar@{-->}[ld]_{s'_1}&{\rm Im}\partial_1(-2)\ar[r]\ar[d]^{0}&0\\
\hspace{-1mm}\cdots\ar[r]&V\otimes W_d\otimes B\ar[r]^(0.7){{\id}_V\otimes\partial_d}&\cdots
\ar[r] &V\otimes W_2\otimes B\ar[r]^(0.55){{\id}_V\otimes \partial_{1}} &V\otimes W_1\otimes B\ar[r]^(0.47){{\id}_V\otimes\partial_1}&\Image ({\id}_V\otimes\partial_1)\ar[r]&0,
}
$$
}
where the top row is by the truncation of ${Q_\centerdot}(-2)$, the bottom row is a truncation of $V\otimes Q_\centerdot$, and 
$$
\Phi'_i:=({\id}_V^{\otimes i+1}\otimes m_B)\left(\left((-1)^i\upsilon_{i,r}-(-1)^i\det\sigma\otimes \upsilon_{i-1,r}+\underline{\sigma\boxtimes \delta_{i,r}} \left(\mathbb{J}\bullet \delta_{i,l}\right)+\underline{\delta_{i,l}\boxtimes\id_V}\left(\mathbb{J}\bullet\delta_{{i},r}\right)
\right)\otimes{\id}_B\right),
$$
for any $i\geq 1$. In fact, $(\id_V\otimes\partial_1)\Phi'_1=0$ follows by the definition of $\upsilon_{1,r}$ and if $i\geq 2$
\begin{align*}
&(\id_V\otimes\partial_i)\Phi'_i-\Phi'_{i-1}\partial_i\\
=&
(-1)^i({\id}_V^{\otimes i}\otimes m_B)\left(\left(\upsilon_{i,r}+\upsilon_{i-1,r}\otimes\id_V-
\det\sigma\otimes\left(\upsilon_{i-1,r}+\upsilon_{i-2,r}\otimes \id_V\right)\right)\otimes\id_B
\right)
\\
&
+({\id}_V^{\otimes i}\otimes m_B)\left(
\left(
\underline{\sigma\boxtimes \delta_{i,r}} \left(\mathbb{J}\bullet \delta_{i,l}\right)+\underline{\delta_{i,l}\boxtimes\id_V}\left(\mathbb{J}\bullet\delta_{{i},r}\right)
\right)\otimes\id_B
\right)\\
&-({\id}_V^{\otimes i}\otimes m_B)\left(\left(\underline{\sigma\boxtimes \delta_{i-1,r}\boxtimes\id_V} \left(\mathbb{J}\bullet \left(\delta_{i-1,l}\boxtimes\id_V\right)\right)+\underline{\delta_{i-1,l}\boxtimes\id_V^{\otimes 2}}\left(\mathbb{J}\bullet\left(\delta_{{i-1},r}\boxtimes \id_V\right)\right)
\right)\otimes{\id}_B\right)\\
=&
(-1)^i({\id}_V^{\otimes i}\otimes m_B)\left(\left(\Delta_i-
\det\sigma\otimes\Delta_{i-1}+(-1)^{i-1}\Gamma_{i-1,l}\boxtimes\delta\right)\otimes\id_B
\right)
\\
&
+(-1)^i({\id}_V^{\otimes i}\otimes m_B)\left(
\left(
\left(\underline{\sigma^{\boxtimes i}\boxtimes\delta}+\underline{\sigma\boxtimes\delta_{i-1,r}\boxtimes\id_V }\right) \left(\mathbb{J}\bullet \left(
\sigma\boxtimes\delta_{i-1,r}+(-1)^i\delta_{i-1,l}\boxtimes\id_V-\delta_{i,r}
\right)
\right)
\right)\otimes\id_B
\right)\\
&
+(-1)^i({\id}_V^{\otimes i}\otimes m_B)\left(
\left(
\underline{\left(
\sigma\boxtimes\delta_{i-1,r}+(-1)^i\delta_{i-1,l}\boxtimes\id_V-\delta_{i,r}
\right)\boxtimes\id_V}\left(\mathbb{J}\bullet\delta_{{i},r}\right)
\right)\otimes\id_B
\right)\\
&-({\id}_V^{\otimes i}\otimes m_B)\left(\left(\underline{\sigma\boxtimes\delta_{i-1,r}\boxtimes\id_V}\left(\mathbb{J}\bullet \left(\delta_{i-1,l}\boxtimes\id_V\right)\right)+\underline{\delta_{i-1,l}\boxtimes\id_V^{\otimes 2}}\left(\mathbb{J}\bullet\left(\delta_{{i-1},r}\boxtimes \id_V\right)\right)
\right)\otimes{\id}_B\right)\\
=&
(-1)^i({\id}_V^{\otimes i}\otimes m_B)\left(\left(-
\det\sigma\otimes\Delta_{i-1}+(-1)^{i-1}\Gamma_{i-1,l}\boxtimes\delta\right)\otimes\id_B
\right)
\\
&
+(-1)^i({\id}_V^{\otimes i}\otimes m_B)\left(\left(
\underline{\sigma^{\boxtimes i}\boxtimes\delta}(\mathbb{J}\bullet (\sigma\boxtimes\delta_{i-1,r}))
\right)\otimes\id_B\right)\\
&+(-1)^i({\id}_V^{\otimes i}\otimes m_B)
\left(\left(\underline{\sigma\boxtimes\delta_{i-1,r}\boxtimes\id_V }(\mathbb{J}\bullet (\sigma\boxtimes\delta_{i-1,r}))
\right)\otimes\id_B\right)\\
&+({\id}_V^{\otimes i}\otimes m_B)
\left(\left(
\underline{\sigma^{\boxtimes i}\boxtimes\delta}(\mathbb{J}\bullet
(\delta_{i-1,l}\boxtimes\id_V))
\right)\otimes\id_B\right)+({\id}_V^{\otimes i}\otimes m_B)\left(\left( \underline{\delta_{i-1,l}\boxtimes\id_V^{\otimes 2}}\left(\mathbb{J}\bullet\left(\sigma^{\boxtimes i-1}\boxtimes \delta\right)\right)
\right)\otimes{\id}_B\right)\\
=&
(-1)^i({\id}_V^{\otimes i}\otimes m_B)\left(\left(-
\det\sigma\otimes\Delta_{i-1}+(-1)^{i-1}\Gamma_{i-1,l}\boxtimes\delta\right)\otimes\id_B
\right)
\\
&
+(-1)^i({\id}_V^{\otimes i}\otimes m_B)\left(\det\sigma\otimes\left(
\underline{\sigma^{\boxtimes i-1}\boxtimes\delta}(\mathbb{J}\bullet \delta_{i-1,r})\right)\otimes\id_B
\right)\\
&+(-1)^i({\id}_V^{\otimes i}\otimes m_B)\left(
\det\sigma\otimes\left(\underline{\delta_{i-1,r}\boxtimes\id_V}(\mathbb{J}\bullet\delta_{i-1,r})\right)\otimes\id_B
\right)\\
&+({\id}_V^{\otimes i}\otimes m_B)
\left(
\left(
((\sigma^{\boxtimes i})^T\bullet\mathbb{J}\bullet\delta_{i-1,l})^T\boxtimes\delta
\right)
\otimes\id_B
\right)
+({\id}_V^{\otimes i}\otimes m_B)\left(
\left(
(\delta_{i-1,l}^T\bullet\mathbb{J}\bullet\sigma^{\boxtimes i-1})\boxtimes \delta
\right)
\otimes{\id}_B\right)\\
=&0,
\end{align*}
where the second equality follows by Lemma \ref{lemma: construction of delta_r and delta_l} and Lemma \ref{lem: construction of upsilon}, the third equality holds by Lemma \ref{lemma: construction of delta_r and delta_l}, and the fourth equality holds by Lemma \ref{lem: relation between Theta^{(i)}}. 

Hence, the chain homomorphism $\Phi'=\{\Phi'_i\}$ is null homotopic.  Then the rest of proof is similar to the one of Lemma \ref{lemma: construction of delta_r and delta_l}.
\end{proof}

Up to now, we construct a quadruple $(\{\delta_{i,r}\}, \{\delta_{i,l}\},\{\upsilon_{i,r}\},\{\upsilon_{i,l}\})$ for the $\varsigma$-derivation of $\nu$ in a graded double Ore extension $B=A_P[y_1,y_2;\varsigma,\nu]$ of a Koszul algebra $A$. It should be noted that such quadruple is not unique for $\nu$. We have shown that the first classes of maps in a quadruple are used to construct a minimal resolution for $k_B$. In next section, we will use such quadruple to give a homological invariant for $\nu$ and compute the Yoneda product of Ext-algebra $E(B)$ in the following two sections.

To make the construction in this section to be acceptable,  we give an example to explain the whole procedure. 

\begin{example}\label{exa: example 1}
In this example, we assume the field $k$ is algebraically closed. Let $A=T( V)/(R)$ where $V$ is a vector space with a basis $\{x_1,x_2\}$ and $R$ is spanned by $\{x_2\otimes x_1-px_1\otimes x_2\}$, where $p=\sqrt{-1}$. Let $B=A_{\{p,0\}}[y_1,y_2,\varsigma,\nu]$ be a graded double Ore extension, where
$$
\begin{array}{llll}
\varsigma(x_1)=\begin{pmatrix}
    0& px_2\\
    -px_2 &0
\end{pmatrix},& 
\varsigma(x_2)=\begin{pmatrix}
    0& px_1\\
    px_1 &0
\end{pmatrix},&
\nu(x_1)=\begin{pmatrix}
    0\\
    -px_1\otimes x_2 
\end{pmatrix},&
\nu(x_2)=\begin{pmatrix}
    px_1\otimes x_2\\
        0
\end{pmatrix}.
\end{array}
$$ The matrix
$$
\mathbb{J}=\begin{pmatrix}
    0&-p\\
    1&0
\end{pmatrix}.
$$
The linear maps $\sigma=\varsigma_{\mid V}:V\to M_{2\times 2}(V)$ and $\delta_{\mid V}=\nu_{\mid V}:V\to (V\otimes V)^{\oplus 2}$ are 
$$
\begin{array}{llll}
\sigma(x_1)=\begin{pmatrix}
    0& px_2\\
    -px_2 &0
\end{pmatrix},& 
\sigma(x_2)=\begin{pmatrix}
    0& px_1\\
    px_1 &0
\end{pmatrix},&
\delta(x_1)=\begin{pmatrix}
    0\\
    -px_1\otimes x_2 
\end{pmatrix},&
\delta(x_2)=\begin{pmatrix}
   p x_1\otimes x_2\\
        0
\end{pmatrix}.
\end{array}
$$
One obtains that 
\begin{align*}
\delta(x_2\otimes x_1-px_1\otimes x_2)=&\begin{pmatrix}
x_1\otimes x_1\otimes x_2+px_1\otimes x_2\otimes x_1\\
-px_2\otimes x_1\otimes x_2-x_1\otimes x_2\otimes x_2
\end{pmatrix}, \\
    \underline{\delta}(\mathbb{J}\bullet \delta)(x_1)=&0,\\
    \underline{\delta}(\mathbb{J}\bullet \delta)(x_2)=&px_2\otimes x_1\otimes x_2+x_1\otimes x_2\otimes x_2 ,
\end{align*}
and linear maps $\delta_{2,r}:R\to (R\otimes V)^{\oplus 2}$, $\delta_{2,l}:R\to (V\otimes R)^{\oplus 2}$, $\upsilon_{1,r}:V\to R\otimes V$ and $\upsilon_{1,l}:V\to V\otimes R$ satisfy
\begin{align*}
    \delta_{2,r}(x_2\otimes x_1-px_1\otimes x_2)&=\begin{pmatrix}
0\\
(x_2\otimes x_1-px_1\otimes x_2)\otimes (-px_2)
\end{pmatrix},\\
\delta_{2,l}(x_2\otimes x_1-px_1\otimes x_2)&=\begin{pmatrix}
px_1\otimes( x_2\otimes x_1-p x_1\otimes x_2)\\
0
\end{pmatrix}, \\
\upsilon_{1,r}(x_1)&=0,\\
\upsilon_{1,r}(x_2)&=(x_2\otimes x_1-px_1\otimes x_2)\otimes (px_2),\\
\upsilon_{1,l}&=0.
\end{align*}
Write $\delta_{0,r}=\delta_{0,l}=0$, $\delta_{1,r}=\delta_{1,l}=\delta_{\mid V}$.
The Koszul complex for $k_A$ is
$$
(\cdots\xlongrightarrow{} 0\xlongrightarrow{}W_2\otimes A\xlongrightarrow{\partial^A_{2}}W_1\otimes A\xlongrightarrow{\partial^A_{1}}A\xlongrightarrow{}0\to\cdots)\ \xlongrightarrow{\varepsilon_A}\  k_A,
$$
where $W_1=V$ and $W_2=R$, and then
\begin{equation*}
Q_\centerdot=(\cdots\xlongrightarrow{} 0\xlongrightarrow{}W_2\otimes B\xlongrightarrow{\partial_{2}}W_1\otimes B\xlongrightarrow{\partial_{1}}B\to0\to\cdots)\xlongrightarrow{\varepsilon_A\otimes_AB}B/A_{\geq1}B.
\end{equation*}
The commutative diagram (without dotted arrows) in Lemma \ref{lem: construction of upsilon} becomes
\begin{equation*}
\xymatrix@C=20px{
\cdots\ar[r]&0\ar[r] &W_2\otimes B(-2) \ar[r]^{\partial_2}\ar[d]^{f_2}\ar@{-->}[ldd]^(0.6){h_{2}=0}  &W_1\otimes B(-2)\ar[r]^(0.55){\partial_{1}}\ar[d]^{f_1}\ar@{-->}[ldd]^(0.6){h_{1}} &B(-2)\ar@{-->}[ldd]^(0.6){h_0=0}\ar[d]^{f_0}\ar[r]&0\ar@{-->}[ldd]^(0.6){h_{-1}=0}\ar[r]\ar[d]^0&\cdots
\\
\cdots\ar[r]&0\ar[r]  & (W_2\otimes B(-1) )^{\oplus 2}\ar[r]^(0.5){{\partial^{\oplus 2}_2}}\ar[d]^{g_2} &(W_1\otimes B(-1))^{\oplus 2}\ar[r]^(0.65){{\partial^{\oplus 2}_{1}}} \ar[d]^{g_1}&B(-1)^{\oplus 2}\ar[d]^{g_0}\ar[r]&0\ar[r]\ar[d]^0&\cdots
\\
\cdots\ar[r]&0\ar[r]  & W_2\otimes B\ar[r]^{\partial_2} &W_1\otimes B\ar[r]^(0.6){{\partial_{1}}} &B\ar[r]&0\ar[r]&\cdots,
}
\end{equation*}
where 
\begin{align*}
    f_i
&=\mathbb{J}\bullet\left(\sigma^{\boxtimes i}\boxtimes \lambda_Y
+({\id}_V^{\otimes i}\otimes m_B)^{\oplus2}(\delta_{i,r}\boxtimes \id_B)
\right),\quad i=0,1,2,\\
g_i
&=  \underline{\sigma^{\boxtimes i}\boxtimes\lambda_Y}+({\id}_V^{\otimes i}\otimes m_B)
\underline{\delta_{i,r}\boxtimes \id_B},\quad i=0,1,2,
\\
h_1&=\Gamma_{1,r}\boxtimes\lambda_{Y}+({\id}_V^{\otimes 2}\otimes m_B)(\upsilon_{1,r}\otimes {\id}_B),
\end{align*}
and the linear map
$$
\begin{array}{cclc}
\Gamma_{1,r}:&W_1&\to &M_{1\times2}(R)\\
~&x_1&\mapsto&(x_2\otimes x_1-px_1\otimes x_2,0),\\
~&x_2&\mapsto&(0,p(x_2\otimes x_1-px_1\otimes x_2)).
\end{array}
$$
By Theorem \ref{thm: minimial free resolution}, we have a minimal free resolution of $k_B$:
\begin{equation*}
    \begin{aligned}
&0\xlongrightarrow{}
W_1\otimes B(-2)\oplus (W_{2}\otimes B(-1))^{\oplus2}\oplus W_3\otimes B 
\xlongrightarrow{
\left(
\begin{array}{ccc}
\partial_{1}& 0&  0\\ 
-f_1& -\partial_{2}^{\oplus2}&  0\\
h_1 &   g_2&   \partial_3
\end{array}
\right)
}
B(-2)\oplus (W_1\otimes B(-1))^{\oplus2}\oplus W_2\otimes B\\
&\qquad\qquad\qquad\qquad\xlongrightarrow{
\left(
\begin{array}{ccc}
-f_0&  -\partial_1^{\oplus2}&  0\\
0&   g_1&  \partial_2
\end{array}
\right)
}
B(-1)^{\oplus2}\oplus W_1\otimes B
\xlongrightarrow{
\left(
\begin{array}{cc}
g_0& \partial_1
\end{array}
\right)
} 
B\xlongrightarrow{} 0\to \cdots.
\end{aligned}
\end{equation*}
\end{example}

\subsection{Two properties}
We show two properties of quadruples for skew derivations in graded double Ore extensions, which will be used later.

\begin{proposition}\label{prop: relation between delta_r and delta_l}
Let $(\{\delta_{i,r}\}, \{\delta_{i,l}\},\{\upsilon_{i,r}\},\{\upsilon_{i,l}\})$ be a quadruple of linear maps for $\nu$ constructed as in Lemma \ref{lemma: construction of delta_r and delta_l}, Lemma \ref{lem: construction of upsilon} and Lemma \ref{lem: construction of upsilon l}.
\begin{enumerate}
\item For any $d\geq 1$,
$$
\sum_{i=1}^d (-1)^{i}\delta_{i,r}\boxtimes \id_V^{\boxtimes d-i}=(-1)^{d+1}\sum_{i=1}^{d}(-1)^{i}\sigma^{\boxtimes d-i}\boxtimes\delta_{i,l}.
$$
\item For any $d\geq 2$, 
$$
\sum_{i=1}^{d-1}(-1)^{i+1}\sigma^{\boxtimes d-i-1}\boxtimes\delta_{i,l}\boxtimes \id_V=(-1)^{d+1}\sum_{i=1}^{d-1}(-1)^i\delta_{i,r}\boxtimes \id_V^{\boxtimes d-i}.
$$

\item For any $d\geq 1$,
\begin{align*}
   \sum_{i=1}^d\upsilon_{i,r}\otimes\id^{\otimes d-i}_V=&
\sum_{i=1}^d(-1)^i(\det\sigma)^{\otimes d-i}\otimes\upsilon_{i,l}
-\sum_{i=1}^d\sum_{u=1}^i(-1)^u(\det\sigma)^{\otimes d-i}\otimes\\&\left(\underline{\sigma\boxtimes \delta_{u,r}\boxtimes\id^{\boxtimes i-u}_V} \left(\mathbb{J}\bullet (\delta_{u,l}\boxtimes\id^{\boxtimes i-u}_V)\right)+\underline{\delta_{u,l}\boxtimes\id^{\boxtimes i-u+1}_V}\left(\mathbb{J}\bullet(\delta_{{u},r}\boxtimes\id^{\boxtimes i-u}_V)\right)\right).
\end{align*}

\item For any $d\geq 2$,
\begin{align*}
\sum_{i=1}^{d-1}\upsilon_{i,r}\otimes\id^{\otimes d-i}_V
=&\sum_{i=1}^{d-1}(-1)^{i}(\det\sigma)^{\otimes d-i-1}\otimes\upsilon_{i,l}\otimes\id_V-\sum_{i=1}^{d-1}\sum_{u=1}^{i}(-1)^u(\det\sigma)^{\otimes d-i-1}\otimes\\
&\left(\underline{\sigma\boxtimes \delta_{u,r}\boxtimes\id^{\boxtimes i-u+1}_V} \left(\mathbb{J}\bullet (\delta_{u,l}\boxtimes\id^{\boxtimes i-u+1}_V)\right)+\underline{\delta_{u,l}\boxtimes\id^{\boxtimes i-u+2}_V}\left(\mathbb{J}\bullet(\delta_{{u},r}\boxtimes\id^{\boxtimes i-u+1}_V)\right)\right).
\end{align*}
\end{enumerate}

\end{proposition}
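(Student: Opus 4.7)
The plan is to establish the four assertions sequentially, reducing each to a telescoping sum driven by a single-term recurrence obtained from Lemma \ref{lemma: construction of delta_r and delta_l}(b) or Lemma \ref{lem: construction of upsilon l}.

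For (a), I would multiply the recursion $\delta_{i,r}+(-1)^i\delta_{i,l}=\sigma\boxtimes\delta_{i-1,r}+(-1)^i\delta_{i-1,l}\boxtimes\id_V$ by $(-1)^i$, apply $\boxtimes\id_V^{\boxtimes d-i}$ on the right, and sum over $i=1,\dots,d$. The right-hand side telescopes to $-\delta_{d,l}$ (using $\delta_{0,l}=0$), while a reindexing $j=i-1$ rewrites the left-hand side as $U_d+\sigma\boxtimes U_{d-1}$, where $U_d:=\sum_{i=1}^d(-1)^i\delta_{i,r}\boxtimes\id_V^{\boxtimes d-i}$. Hence $U_d=-\sigma\boxtimes U_{d-1}-\delta_{d,l}$. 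An analogous identity $T_d=\sigma\boxtimes T_{d-1}+(-1)^d\delta_{d,l}$ for $T_d:=\sum_{i=1}^d(-1)^i\sigma^{\boxtimes d-i}\boxtimes\delta_{i,l}$ holds by direct inspection, and an induction from the base case $d=1$ (using $\delta_{1,r}=\delta_{1,l}$) yields $U_d=(-1)^{d+1}T_d$, which is (a). Part (b) follows at once: apply (a) with $d-1$ in place of $d$, multiply through by $(-1)^{d+1}$, and apply $\boxtimes\id_V$ on the right.

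For (c), set $V_d:=\sum_{i=1}^d\upsilon_{i,r}\otimes\id_V^{\otimes d-i}$, $\Upsilon_d:=\sum_{i=1}^d(-1)^i(\det\sigma)^{\otimes d-i}\otimes\upsilon_{i,l}$, and let $E_d$ denote the double sum on the right-hand side of (c). Using Remark \ref{rem: tensor is not unusual}(b) with $\id_V^{\boxtimes n}$ regarded as a trivially diagonal extension, one checks that each inner expression in $E_d$ equals $X_u\otimes\id_V^{\otimes i-u}$, where $X_u:=\underline{\sigma\boxtimes\delta_{u,r}}(\mathbb{J}\bullet\delta_{u,l})+\underline{\delta_{u,l}\boxtimes\id_V}(\mathbb{J}\bullet\delta_{u,r})$. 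Peeling off the $i=d$ term gives $E_d=\det\sigma\otimes E_{d-1}+K_d$ with $K_d:=\sum_{u=1}^d(-1)^u X_u\otimes\id_V^{\otimes d-u}$, and trivially $\Upsilon_d=\det\sigma\otimes\Upsilon_{d-1}+(-1)^d\upsilon_{d,l}$. Rewriting Lemma \ref{lem: construction of upsilon l} as $\upsilon_{i,r}-\det\sigma\otimes\upsilon_{i-1,r}=(-1)^i(\upsilon_{i,l}+\upsilon_{i-1,l}\otimes\id_V-X_i)$, tensoring with $\id_V^{\otimes d-i}$, and summing over $i$ produces $V_d-\det\sigma\otimes V_{d-1}=(-1)^d\upsilon_{d,l}-K_d$, after the near-telescoping $\sum(-1)^i\upsilon_{i,l}\otimes\id_V^{\otimes d-i}+\sum(-1)^i\upsilon_{i-1,l}\otimes\id_V^{\otimes d-i+1}=(-1)^d\upsilon_{d,l}$ (again using $\upsilon_{0,l}=0$). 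Consequently $V_d$ and $\Upsilon_d-E_d$ satisfy identical single-term recurrences, and induction from the base case $d=1$ (where both reduce to $\upsilon_{1,r}=-\upsilon_{1,l}+X_1$, i.e., Lemma \ref{lem: construction of upsilon l} at $i=1$) gives $V_d=\Upsilon_d-E_d$, which is (c). Finally (d) is the identity (c) for $d-1$ tensored with $\id_V$ on the right, the inner expressions of its double sum being precisely those of $E_{d-1}$ tensored with $\id_V$.

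The main obstacle will be the careful bookkeeping around the matrix-valued $\boxtimes$ and $\bullet$ notation---in particular identifying each inner expression of the double sum in (c) with the clean tensor factor $X_u\otimes\id_V^{\otimes i-u}$ and verifying that $\mathbb{J}\bullet(-)$ commutes with the trivial $\boxtimes\id_V^{\boxtimes n}$ extension. Once these simplifications are in place and the appropriate single-term recurrences are isolated, each of (a)--(d) follows from a short induction on $d$.
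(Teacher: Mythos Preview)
Your proposal is correct and follows essentially the same strategy as the paper: for (a) and (c) the paper also sums the single-step recurrences from Lemma~\ref{lemma: construction of delta_r and delta_l}(b) and Lemma~\ref{lem: construction of upsilon l}, telescopes the $\delta_{\bullet,l}$- (resp.\ $\upsilon_{\bullet,l}$-) terms, and finishes by induction on $d$. Your derivations of (b) from (a) and of (d) from (c) by substituting $d-1$ and tensoring with $\id_V$ on the right are slightly slicker than the paper's, which reruns the telescoping argument separately in each case, but the underlying mechanism is identical.
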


\begin{proof}
(a,b) By Lemma \ref{lemma: construction of delta_r and delta_l}, one obtains that
\begin{align*}
    \sum_{i=1}^d (-1)^{i}\delta_{i,r}\boxtimes \id_V^{\boxtimes d-i}&=\sum_{i=1}^d (-1)^{i}\sigma\boxtimes\delta_{i-1,r}\boxtimes \id_V^{\boxtimes d-i}-\delta_{d,l},\\
(-1)^{d+1}\sum_{i=1}^{d-1}(-1)^i\delta_{i,r}\boxtimes \id_V^{\boxtimes d-i}&=(-1)^{d+1}\sum_{i=1}^{d-1} (-1)^{i}\sigma\boxtimes\delta_{i-1,r}\boxtimes \id_V^{\boxtimes d-i}+(-1)^d\delta_{d-1,l}\boxtimes\id_V.
\end{align*}
Then the two results follow by induction on $d$.

(c) By Lemma \ref{lem: construction of upsilon l}, one obtains that 
\begin{align*}
    &\sum_{u=1}^i\upsilon_{u,r}\otimes\id^{\otimes i-u}_V-\det\sigma\otimes \upsilon_{u-1,r} \otimes\id^{\otimes i-u}_V\\
=&\sum_{u=1}^i(-1)^u\left(\upsilon_{u,l}\otimes\id^{\otimes i-u}_V+\upsilon_{u-1,l}\otimes\id^{\otimes i-u+1}_V\right) \\
&-\sum_{u=1}^i(-1)^u\left(\underline{\sigma\boxtimes \delta_{u,r}\boxtimes\id^{\boxtimes i-u}_V} \left(\mathbb{J}\bullet (\delta_{u,l}\boxtimes\id^{\boxtimes i-u}_V)\right)+\underline{\delta_{u,l}\boxtimes\id^{\boxtimes i-u+1}_V}\left(\mathbb{J}\bullet(\delta_{{u},r}\boxtimes\id^{\boxtimes i-u}_V)\right)\right)\\
=&(-1)^i\upsilon_{i,l}-\sum_{u=1}^i(-1)^u\left(\underline{\sigma\boxtimes \delta_{u,r}\boxtimes\id^{\boxtimes i-u}_V} \left(\mathbb{J}\bullet (\delta_{u,l}\boxtimes\id^{\boxtimes i-u}_V)\right)+\underline{\delta_{u,l}\boxtimes\id^{\boxtimes i-u+1}_V}\left(\mathbb{J}\bullet(\delta_{{u},r}\boxtimes\id^{\boxtimes i-u}_V)\right)\right),
\end{align*}
for any $i\geq 1$. Then 
\begin{align*}
&\sum_{i=1}^d(-1)^i(\det\sigma)^{\otimes d-i}\otimes\upsilon_{i,l}-\sum_{i=1}^d\sum_{u=1}^i(-1)^u(\det\sigma)^{\otimes d-i}\otimes\\
&~\qquad\qquad\qquad\qquad\qquad\left(\underline{\sigma\boxtimes \delta_{u,r}\boxtimes\id^{\boxtimes i-u}_V} \left(\mathbb{J}\bullet (\delta_{u,l}\boxtimes\id^{\boxtimes i-u}_V)\right)+\underline{\delta_{u,l}\boxtimes\id^{\boxtimes i-u+1}_V}\left(\mathbb{J}\bullet(\delta_{{u},r}\boxtimes\id^{\boxtimes i-u}_V)\right)\right)\\
=&\sum_{i=1}^d\sum_{u=1}^i(\det\sigma)^{\otimes d-i}\otimes\upsilon_{u,r}\otimes\id^{\otimes i-u}_V-\sum_{i=1}^d\sum_{u=1}^i(\det\sigma)^{\otimes d-i+1}\otimes \upsilon_{u-1,r} \otimes\id^{\otimes i-u}_V\\
=&\sum_{i=1}^d\upsilon_{i,r}\otimes\id^{\otimes d-i}.
\end{align*}
(d) By Lemma \ref{lem: construction of upsilon l}, we have
\begin{align*}
    &\sum_{u=1}^{i-1}\upsilon_{u,r}\otimes\id^{\otimes i-u}_V-\det\sigma\otimes \upsilon_{u-1,r} \otimes\id^{\otimes i-u}_V\\
=&\sum_{u=1}^{i-1}(-1)^u\left(\upsilon_{u,l}\otimes\id^{\otimes i-u}_V+\upsilon_{u-1,l}\otimes\id^{\otimes i-u+1}_V\right) \\
&-\sum_{u=1}^{i-1}(-1)^u\left(\underline{\sigma\boxtimes \delta_{u,r}\boxtimes\id^{\boxtimes i-u}_V} \left(\mathbb{J}\bullet (\delta_{u,l}\boxtimes\id^{\boxtimes i-u}_V)\right)+\underline{\delta_{u,l}\boxtimes\id^{\boxtimes i-u+1}_V}\left(\mathbb{J}\bullet(\delta_{{u},r}\boxtimes\id^{\boxtimes i-u}_V)\right)\right)\\
=&(-1)^{i-1}\upsilon_{i-1,l}\otimes\id_V-\sum_{u=1}^{i-1}(-1)^u\left(\underline{\sigma\boxtimes \delta_{u,r}\boxtimes\id^{\boxtimes i-u}_V} \left(\mathbb{J}\bullet (\delta_{u,l}\boxtimes\id^{\boxtimes i-u}_V)\right)+\underline{\delta_{u,l}\boxtimes\id^{\boxtimes i-u+1}_V}\left(\mathbb{J}\bullet(\delta_{{u},r}\boxtimes\id^{\boxtimes i-u}_V)\right)\right),
\end{align*}
for any $i\geq 2$. The rest of the proof is similar to (c).
\end{proof}

\begin{proposition}
Let $\delta,\delta'$ be two linear maps from $V$ to $(V\otimes V)^{\oplus2}$ such that $\pi_A^{\oplus2}\delta=\pi_A^{\oplus2}\delta'=\nu_{|_V}$. Let $(\{\delta_{i,r}\}, \{\delta_{i,l}\})$ and $(\{\delta'_{i,r}\}, \{\delta'_{i,l}\})$ be two pairs of linear maps for $\nu$ constructed from $\delta$  and $\delta'$  respectively as in Lemma \ref{lemma: construction of delta_r and delta_l}. Then for any $i\geq1$,
\begin{enumerate}
\item $
\left(\sum_{j=0}^{i-1}(-1)^j(\delta_{i-j,r}-\delta'_{i-j,r})\boxtimes\id_V^{\boxtimes j}\right)(W_i)\subseteq W^{\oplus2}_{i+1}.
$
\item $
\left(\delta_{i,l}-\delta'_{i,l}+(-1)^{i}\sum_{j=1}^{i-1}(-1)^j\sigma\boxtimes(\delta_{i-j,r}-\delta'_{i-j,r})\boxtimes\id_V^{\boxtimes j-1}\right)(W_i)\subseteq W_{i+1}^{\oplus2}.
$
\end{enumerate}
As a consequence, if $W_{d+1}=0$ for some $d\geq1$, then
\begin{align}
\sum_{j=0}^{d-1}(-1)^{j}\left(
    \delta_{d-j,r}
\boxtimes\id_V^{\boxtimes j}\right)&=\sum_{j=0}^{d-1}(-1)^{j}\left(
    \delta'_{d-j,r}\boxtimes\id_V^{\boxtimes j}\right),\label{eq: delta r in W_d+1=0}\\
(-1)^{d}
    \delta_{d,l}+\sum_{j=1}^{d-1}(-1)^{j}\left(\sigma\boxtimes
    \delta_{d-j,r} \boxtimes\id_V^{\boxtimes j-1}\right)&=(-1)^{d}
    \delta'_{d,l}+\sum_{j=1}^{d-1}(-1)^{j}\left(\sigma\boxtimes 
    \delta'_{d-j,r}\boxtimes\id_V^{\boxtimes j-1}\right).\label{eq: delta r and delta l in W_d+1=0}
\end{align}

\end{proposition}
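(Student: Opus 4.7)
The plan is to establish (a) and (b) by induction on $i$, then obtain (\ref{eq: delta r in W_d+1=0}) and (\ref{eq: delta r and delta l in W_d+1=0}) by specializing to $i = d$ and invoking $W_{d+1} = 0$. Write $\tilde{\delta}_{i,r} = \delta_{i,r} - \delta'_{i,r}$, $\tilde{\delta}_{i,l} = \delta_{i,l} - \delta'_{i,l}$, and let $S_i$, $T_i$ denote the linear combinations appearing on the left-hand sides of (a) and (b). The base case $i=1$ is immediate: since both $\delta,\delta'$ extend $\nu_{\mid V}$, the difference $\tilde\delta = \delta - \delta'$ satisfies $\pi_A^{\oplus 2}\tilde\delta = 0$, so $\tilde\delta(V) \subseteq R^{\oplus 2} = W_2^{\oplus 2}$, and the equalities $\delta_{1,r} = \delta_{1,l} = \delta|_V$ (resp. primed) give both claims.

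For the inductive step of (a), the key observation is that the chain maps $\{f_i\}$ and $\{f'_i\}$ from Lemma \ref{lemma: construction of delta_r and delta_l} are both right graded $B$-module chain maps $Q_\centerdot(-2) \to Q_\centerdot^{\oplus 2}(-1)$ lifting the same morphism $\mathbb{J}\bullet\lambda_Y:J(-2)\to J(-1)^{\oplus 2}$; in particular $f_0 = f'_0 = \mathbb{J}\bullet\lambda_Y$ is independent of $\delta$. By the Comparison Theorem they are chain homotopic through right graded $B$-module maps $t_i:W_i\otimes B(-2)\to (W_{i+1}\otimes B(-1))^{\oplus 2}$ with $f_i - f'_i = \partial_{i+1}^{\oplus 2}t_i + t_{i-1}\partial_i$ (and $t_{-1}=0$). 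A direct degree check shows $t_i$ restricts on $W_i\otimes 1$ to a linear map into $W_{i+1}^{\oplus 2}$, and the condition $\partial_1^{\oplus 2}t_0 = 0$ combined with $B$-linearity forces $t_0 = 0$. Evaluating the homotopy relation on $W_i\otimes 1$ (where $\partial_{i+1}^{\oplus 2}t_i$ reduces to the inclusion $W_{i+1}^{\oplus 2} \hookrightarrow (W_i\otimes V)^{\oplus 2}$ and $t_{i-1}\partial_i$ becomes $(t_{i-1}|_{W_{i-1}\otimes 1}\otimes\id_V)|_{W_i}$ by $B$-linearity) and applying $\mathbb{J}^{-1}$ yields
\[
\tilde\delta_{i,r}|_{W_i} \;=\; v_i + (v_{i-1}\otimes\id_V)|_{W_i}, \qquad v_i := \mathbb{J}^{-1}\cdot t_i(-\otimes 1):W_i\to W_{i+1}^{\oplus 2},\ v_0=0.
\]
Substituting into $S_i$ and reindexing, the sum telescopes to $S_i|_{W_i} = v_i \in W_{i+1}^{\oplus 2}$, establishing (a).

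For (b), I would exploit two representations of $T_i$. After reindexing, $T_i = \tilde\delta_{i,l} + (-1)^{i+1}\sigma\boxtimes S_{i-1}$ on $W_i$; since $\tilde\delta_{i,l}(W_i)\subseteq (V\otimes W_i)^{\oplus 2}$ and (a) at level $i-1$ gives $S_{i-1}(W_{i-1})\subseteq W_i^{\oplus 2}$, this implies $T_i(W_i)\subseteq (V\otimes W_i)^{\oplus 2}$. On the other hand, using Lemma \ref{lemma: construction of delta_r and delta_l}(b) for the differences ($\tilde\delta_{i,r} + (-1)^i\tilde\delta_{i,l} = \sigma\boxtimes\tilde\delta_{i-1,r}+(-1)^i\tilde\delta_{i-1,l}\boxtimes\id_V$), routine manipulation produces the recursion
\[
T_i \;=\; (-1)^{i+1}S_i + \bigl((-1)^{i+1}S_{i-1} + T_{i-1}\bigr)\boxtimes\id_V,
\]
whence (a) at levels $i$ and $i-1$ together with the inductive hypothesis (b) at $i-1$ give $T_i(W_i)\subseteq (W_i\otimes V)^{\oplus 2}$. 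Intersecting, $T_i(W_i)\subseteq (V\otimes W_i)^{\oplus 2}\cap (W_i\otimes V)^{\oplus 2} = W_{i+1}^{\oplus 2}$. The consequences (\ref{eq: delta r in W_d+1=0}) and (\ref{eq: delta r and delta l in W_d+1=0}) then fall out by taking $i=d$ and noting $W_{d+1}=0$ makes both inclusions into equalities with zero.

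The main obstacle will be the chain-homotopy argument for (a): one must observe that $f_0 = f'_0$ is the shared zeroth term (so the two chain maps genuinely lift the same morphism), then carry out the internal degree bookkeeping to conclude $t_i(W_i\otimes 1)\subseteq W_{i+1}^{\oplus 2}$ with the crucial boundary condition $v_0 = 0$; this is precisely what forces the telescoping in $S_i$ to collapse to $v_i$.
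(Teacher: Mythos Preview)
Your proof is correct and, for part (a), essentially identical to the paper's: both arguments use the Comparison Theorem to produce a chain homotopy between the lifts $\{f_i\}$ and $\{f'_i\}$ of $\mathbb{J}\bullet\lambda_Y$, observe by degree that the homotopy restricts to maps $W_i\to W_{i+1}^{\oplus 2}$, and then telescope to identify $S_i$ with that restriction (after clearing $\mathbb{J}$).

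For part (b) your inductive argument via the recursion $T_i = (-1)^{i+1}S_i + \bigl((-1)^{i+1}S_{i-1}+T_{i-1}\bigr)\boxtimes\id_V$ is valid, but it is more work than necessary. The paper notices that summing the difference of the identities in Lemma~\ref{lemma: construction of delta_r and delta_l}(b) over $j=1,\dots,i$ (with alternating signs) telescopes on the left-hand side as well, yielding directly
\[
S_i \;=\; (-1)^{i+1}\tilde\delta_{i,l} \;-\; \sum_{j=1}^{i-1}(-1)^j\,\sigma\boxtimes\tilde\delta_{i-j,r}\boxtimes\id_V^{\boxtimes\, j-1},
\]
which is exactly $(-1)^{i+1}T_i$. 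Thus (b) follows from (a) in one line, with no separate induction and no need to intersect $(V\otimes W_i)^{\oplus 2}$ with $(W_i\otimes V)^{\oplus 2}$. Your route reaches the same conclusion; the paper's shortcut simply exploits that the two expressions in (a) and (b) are literally equal up to sign.
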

\begin{proof}
(a)
Let $\{f_{i}\}$ and $\{f'_{i}\}$ be two lifts of $\mathbb{J}\bullet\lambda_Y:B/A_{\geq 1}B(-2)\to \left(B/A_{\geq1}B\right)^{\oplus2}(-1)$, and $\{\delta_{i,r}
\}$ and $\{\delta'_{i,r}
\}$ be the associated linear maps for $\{f_{i}\}$ and $\{f'_{i}\}$  as in Lemma \ref{lemma: construction of delta_r and delta_l}  respectively. 

Clearly, $\{f_{i}\}$ and $\{f'_{i}\}$ are homotopic, so there exist right graded $B$-module homomorphisms $s_i:W_i\otimes B(-2)\to \left(W_{i+1}\otimes B(-1)\right)^{\oplus2}$ 
such that 
    $$
     f_i-f'_i=(\id_V^{\otimes i}\otimes m_B)^{\oplus2}\left(
     \mathbb{J}\bullet
    \left(\delta_{i,r}-\delta'_{i,r}\right)\boxtimes\id_B
    \right)=s_{i-1}\partial_i+\partial_{i+1}^{\oplus2}s_{i},
    $$
for all $i\geq 1$. In particular, $s_0=0$. 
\begin{scriptsize}$${
\xymatrix{
\cdots\ar[r] & W_i\otimes B(-2) \ar[r]^{\partial_i}\ar@{-->}[ld]_{s_i}\ar[d]^{f_i-f'_i} &W_{i-1}\otimes B(-2)\ar[r]^(0.65){\partial_{i-1}}\ar@{-->}[ld]_{s_{i-1}}\ar[d]^{f_{i-1}-f'_{i-1}}&\cdots\ar[r]^(0.35){\partial_{3}}&W_2\otimes B(-2)\ar@{-->}[ld]_{s_2}\ar[r]^(0.55){\partial_2}\ar[d]^{f_2-f'_2} &W_1\otimes B(-2)\ar@{-->}[ld]_{s_1}\ar[r]^(0.55){\partial_{1}}\ar[d]^{f_1-f'_1} &B(-2)\ar[d]^{f_0-f'_0}\ar@{-->}[ld]_{0}
\\
\cdots\ar[r] & (W_i\otimes B(-1))^{\oplus2} \ar[r]^{\partial_i^{\oplus2}} &(W_{i-1}\otimes B(-1))^{\oplus2}\ar[r]^(0.6){{\partial_{i-1}^{\oplus2}}}&\cdots\ar[r]^(0.41){{\partial_{3}^{\oplus2}}}&(W_2\otimes B(-1))^{\oplus2}\ar[r]^(0.55){\partial_{1}^{\oplus2}} &(W_1\otimes B(-1))^{\oplus2}\ar[r]^(0.6){{\partial_{1}^{\oplus2}}} &B^{\oplus2}(-1).
}
}
$$
\end{scriptsize}
Hence, ${s_i}_{\mid W_i}=\mathbb{J}\bullet
    \left(\delta_{i,r}-\delta'_{i,r}\right)-{s_{i-1}}_{\mid W_{i-1}}\boxtimes\id_V$ and $ \Image {s_i}_{\mid W_i}\subseteq W_{i+1}^{\oplus2},$
for any $i\geq 1$. Note that ${s_{1}}_{\mid W_1}=
    \mathbb{J}\bullet (\delta_{1,r}-\delta'_{1,r})$, we have
$$
{s_i}_{\mid W_i}=\sum_{j=0}^{i-1}(-1)^j\mathbb{J}\bullet\left(
    (\delta_{i-j,r}-\delta'_{i-j,r})\boxtimes\id_V^{\boxtimes j}\right).
$$
The result follows.

(b)  By Lemma \ref{lemma: construction of delta_r and delta_l}, one obtains 
\begin{align*}
&\sum_{j=1}^i(-1)^{i-j}
    \left(\delta_{j,r}-\delta'_{j,r}\right)\boxtimes\id_V^{\boxtimes i-j}
-\sum_{j=1}^i(-1)^{i-j}\sigma\boxtimes\left(
    \delta_{j-1,r}-\delta'_{j-1,r}\right)\boxtimes\id_V^{\boxtimes i-j}=
    (-1)^{i+1}\left(
    \delta_{i,l}-\delta'_{i,l}\right).
\end{align*}
Then
$$
\mathbb{J}^{-1}\bullet {s_i}_{\mid W_i}=\sum_{j=0}^{i-1}(-1)^{j}\left(
    \delta_{i-j,r}-\delta'_{i-j,r}\right)\boxtimes \id_V^{\boxtimes j}=(-1)^{i+1}\left(
    \delta_{i,l}-\delta'_{i,l}\right)-\sum_{j=1}^{i-1}(-1)^{j}\sigma\boxtimes\left(
    \delta_{i-j,r}-\delta'_{i-j,r}\right)\boxtimes \id_V^{\boxtimes j-1}.
$$
Then the result holds by (a).

The last consequence can be obtained by (a, b) and Proposition \ref{prop: relation between delta_r and delta_l}(a) immediately.
\end{proof}

\section{Nakayama automorphisms of graded double Ore extensions of Koszul AS-regular algebras}
In this section, $A=T(V)/(R)$ is a Koszul AS-regular algebra of dimension $d$ with the Nakayama automorphism $\mu_A$, where $V$ is a vector space with a basis $\{x_1,x_2,\cdots,x_n\}$. Write $\{\eta_1,\eta_2,\cdots,\eta_n\}$ for a basis of $W_{d-1}$ and  $\omega$ for a basis of $W_d$.

We assume $B=A_P[y_1,y_2;\varsigma,\nu]$ is graded double Ore extension of $A$ where $p_{12}\neq 0$, and so it is a Koszul AS-regular algebra of dimension $d+2$ by Theorem \ref{prop: invertible iff double ore extension} and Corollary \ref{cor: double ore extension preserve Koszul}. Let  $\sigma=\varsigma_{\mid V}$ and $(\{\delta_{i,r}\}, \{\delta_{i,l}\},\{\upsilon_{i,r}\},\{\upsilon_{i,l}\})$ be a quadruple of the $\varsigma$-derivation $\nu$ constructed in the last section.

\subsection{Skew divergence}
Since $\dim W_d=1$ with a basis $\omega$, there exists a unique pair $(\delta_{r}=(\delta_{r;j}),\delta_l=(\delta_{l;j}))$ of elements in $V^{\oplus 2}$ with respect to $(\{\delta_{i,r}\}, \{\delta_{i,l}\},\{\upsilon_{i,r}\},\{\upsilon_{i,l}\})$ such that
\begin{align}
  \delta_{d,r}(\omega)=\begin{pmatrix}
    \omega\otimes\delta_{r;1}\\\omega\otimes \delta_{r;2}
\end{pmatrix},\quad
\delta_{d,l}(\omega)=\begin{pmatrix}
    \delta_{l;1}\otimes\omega\\\delta_{l;2}\otimes\omega 
\end{pmatrix}.  
\end{align}

Although quadruple is not unique and the pair $(\delta_{r},\delta_l)$ is not unique for $\nu$, there exists an invariant for $\nu$ arisen by such pairs $(\delta_{r},\delta_l)$ of elements in $V^{\oplus 2}$. 
\begin{proposition}\label{coro: relation between delta_r and delta l}
Let $(\delta_{r}=(\delta_{r;j}),\delta_l=(\delta_{l;j}))$ and $(\delta'_{r}=(\delta'_{r;j}),\delta'_l=(\delta'_{l;j}))$ be two pairs of elements in $V^{\oplus 2}$ with respect to two quadruples $(\{\delta_{i,r}\}, \{\delta_{i,l}\},\{\upsilon_{i,r}\},\{\upsilon_{i,l}\})$ and $(\{\delta'_{i,r}\}, \{\delta'_{i,l}\},\{\upsilon'_{i,r}\},\{\upsilon'_{i,l}\})$ for $\nu$ respectively. Then
$$
\delta_r+{\mu_A^{\oplus2}}(\sigma^{-T}\cdot\delta_l)=\delta'_r+{\mu_A^{\oplus2}}(\sigma^{-T}\cdot\delta'_l).
$$
\end{proposition}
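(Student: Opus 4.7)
The plan is to use the identities \eqref{eq: delta r in W_d+1=0} and \eqref{eq: delta r and delta l in W_d+1=0} to pin down how the pair $(\delta_r,\delta_l)$ depends on the quadruple, and then to use the twisted superpotential property of $\omega$ together with the $T$-invertibility of $\sigma$ from Proposition \ref{prop: invertible means T-invertible} to repackage this dependence into the single invariant combination $\delta_r+\mu_A^{\oplus 2}(\sigma^{-T}\cdot\delta_l)$.

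First I would evaluate \eqref{eq: delta r in W_d+1=0} and \eqref{eq: delta r and delta l in W_d+1=0} at the basis vector $\omega\in W_d$. Setting $\Phi_j=\delta_{d-j,r}-\delta'_{d-j,r}$, the $j=0$ terms on each side isolate $\omega\otimes(\delta_r-\delta'_r)$ and $(\delta_l-\delta'_l)\otimes\omega$, yielding
\begin{align*}
\omega\otimes(\delta_r-\delta'_r)&=-\sum_{j=1}^{d-1}(-1)^{j}(\Phi_j\boxtimes\id_V^{\otimes j})(\omega),\\
(-1)^{d}(\delta_l-\delta'_l)\otimes\omega&=-\sum_{j=1}^{d-1}(-1)^{j}(\sigma\boxtimes\Phi_j\boxtimes\id_V^{\otimes j-1})(\omega).
\end{align*}
The right-hand sides involve exactly the same maps $\Phi_j$, but in the second identity every block $\Phi_j$ is shifted one tensor position to the right and preceded by a $\sigma$ acting on the leftmost factor. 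The target of the proof is therefore to match the two right-hand sides.

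The heart of the argument is to use Theorem \ref{thm: properties of Koszul regular algebras}(c), which says that $\omega$ is a $\mu_A|_V$-twisted superpotential, namely $\omega=(-1)^{d-1}\tau_d^{d-1}(\mu_A|_V\otimes\id^{\otimes d-1})(\omega)$. This identity moves a single factor of $\omega$ from the leftmost to the rightmost position at the cost of a $\mu_A$-twist and the sign $(-1)^{d-1}$. Applied term by term to the second identity, it transports the distinguished $\sigma$ at position $1$ past $\omega$; the $T$-inverse formula $\sigma^{-T}=\mathbb{J}^{-1}\bullet\mathrm{diag}((\det\sigma)^{-1})\bullet\sigma^T\bullet\mathbb{J}$ from Proposition \ref{prop: invertible means T-invertible}, together with the relation $\sigma^{\boxtimes d}(\omega)=(\hdet\varsigma)\omega$ of Theorem \ref{thm: hdet is sigma d}, should convert this $\sigma$ into a $\sigma^{-T}$ acting on the rightmost factor and produce a single $\mu_A$-twist along the way. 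After this manipulation the right-hand sides of the two identities coincide, and subtracting gives
\begin{equation*}
\omega\otimes\left((\delta_r-\delta'_r)+\mu_A^{\oplus 2}(\sigma^{-T}\cdot(\delta_l-\delta'_l))\right)=0\quad\text{in }(V^{\otimes d+1})^{\oplus 2}.
\end{equation*}
Since $\omega\neq0$, the map $V^{\oplus 2}\to(V^{\otimes d+1})^{\oplus 2}$, $u\mapsto\omega\otimes u$, is injective, and the claim follows.

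The hard part is the third step: verifying that the term-by-term cyclic shift applied to the second sum really reproduces the first sum with $\sigma$ replaced by $\sigma^{-T}$, while correctly tracking the accumulated $\mu_A$-twists and the sign $(-1)^{d-1}$ arising from each use of the superpotential identity. The matrix calculus will need to exploit the structural identity \eqref{eq: Theta} linking $\sigma^T\bullet\mathbb{J}\bullet\sigma$ with $\det\sigma$, together with Lemma \ref{lem: relation between Theta^{(i)}} and Theorem \ref{thm: hdet is sigma d}, to keep the $\mathbb{J}$ and $\det\sigma$ factors under control throughout the shift.
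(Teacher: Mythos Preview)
Your overall strategy matches the paper's: evaluate \eqref{eq: delta r in W_d+1=0} and \eqref{eq: delta r and delta l in W_d+1=0} at $\omega$, and then use the twisted superpotential identity $\omega=(-1)^{d-1}\tau_d^{d-1}(\mu_A\otimes\id^{\otimes d-1})(\omega)$ to relate the two resulting sums. Where your proposal goes astray is in the execution of the ``$\sigma$-handling'' step.

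You plan to cyclically transport the leading $\sigma$ through $\omega$ and then invoke the explicit formula for $\sigma^{-T}$ together with $\sigma^{\boxtimes d}(\omega)=(\hdet\varsigma)\omega$ to convert it into a $\sigma^{-T}$ on the right. This does not do what you want: the $\hdet$ relation concerns $d$ simultaneous copies of $\sigma$ acting on $\omega$, not a single $\sigma$ in the first slot, so it gives no direct handle on the term $(\sigma\boxtimes\Phi_j\boxtimes\id^{\boxtimes j-1})(\omega)$. Likewise, the formula $\sigma^{-T}=\mathbb{J}^{-1}\bullet\mathrm{diag}((\det\sigma)^{-1})\bullet\sigma^T\bullet\mathbb{J}$ and the identities of Lemma \ref{lem: relation between Theta^{(i)}} are red herrings here.

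The paper's argument is much more direct. Instead of moving $\sigma$, it applies $\sigma^{-T}$ to the first tensor slot of both sides of the second identity before doing any shifting. On the left this turns $(\delta_l-\delta'_l)\otimes\omega$ into $(\sigma^{-T}\cdot(\delta_l-\delta'_l))\otimes\omega$; on the right, Remark \ref{rem: tensor is not unusual}(b) (with the diagonal $\mathcal{I}^d$) gives
\[
(\sigma^{-T}\boxtimes\mathcal{I}^d)\bullet(\sigma\boxtimes\Phi_j\boxtimes\id^{\boxtimes j-1})=(\sigma^{-T}\bullet\sigma)\boxtimes(\Phi_j\boxtimes\id^{\boxtimes j-1})=\mathcal{I}^1\boxtimes\Phi_j\boxtimes\id^{\boxtimes j-1},
\]
using only the defining property $\sigma^{-T}\bullet\sigma=\mathcal{I}^1$ of the $T$-inverse. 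Now the first tensor factor carries only the identity, so a single application of $(\tau_{d+1}^{d}(\mu_A\otimes\id^{\otimes d}))^{\oplus 2}$ moves it to the end cleanly, producing $(\Phi_j\boxtimes\id^{\boxtimes j})$ applied to $(-1)^{d-1}\tau_d^{d-1}(\mu_A\otimes\id^{\otimes d-1})(\omega)=\omega$. Comparing with \eqref{eq: delta r in W_d+1=0} then finishes the proof. No $\hdet$, no $\mathbb{J}$-bookkeeping, no Lemma \ref{lem: relation between Theta^{(i)}} is needed.
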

\begin{proof} Write $\widetilde{\delta}_{r}=(\widetilde{\delta}_{r;j})=\delta_{r}-\delta'_r$ and $\widetilde{\delta}_{l}=(\widetilde{\delta}_{l;j})=\mu_A^{\oplus 2}\left(\sigma^{-T}\cdot(\delta_{l}-\delta'_l)\right)$. We have
\begin{align*}
\begin{pmatrix}
  \omega\otimes   \widetilde{\delta}_{l;1}\\
  \omega\otimes \widetilde{\delta}_{l;2}
\end{pmatrix}=&
\left(\tau_{d+1}^{d}(\mu_A\otimes\id^{\otimes d})\right)^{\oplus 2}\left(\left(\sigma^{-T}\boxtimes \mathcal{I}^d\right) \cdot\left((\delta_{d,l}-\delta'_{d,l})(\omega) \right)\right)\\
=&(-1)^{d-1}\left(\tau_{d+1}^{d}(\mu_A\otimes\id^{\otimes d})\right)^{\oplus 2}\left(\left(\sigma^{-T}\boxtimes\mathcal{I}^d\right) \cdot\left( \sum_{j=1}^{d-1}(-1)^{j}\left(\sigma\boxtimes(\delta_{d-j,r}-\delta'
_{d-j,r})\boxtimes \id_V^{\boxtimes j-1}\right)(\omega)\right)\right)\\
=&(-1)^{d-1}\left(\tau_{d+1}^{d}(\mu_A\otimes\id^{\otimes d})\right)^{\oplus 2}\left( \sum_{j=1}^{d-1}(-1)^{j}\left(\mathcal{I}^1\boxtimes(\delta_{d-j,r}-\delta'
_{d-j,r})\boxtimes \id_V^{\boxtimes j-1}\right)(\omega)\right)\\
=&\left(\sum_{j=1}^{d-1}(-1)^{j}(\delta_{d-j,r}-\delta'
_{d-j,r})\boxtimes \id_V^{\boxtimes j}\right)
\left((-1)^{d-1}\tau_d^{d-1}(\mu_A\otimes\id_V^{\otimes d-1})(\omega)\right)
\\
=&-(\delta_{d,r}-\delta'_{d,r})
(\omega)\\
=&-\begin{pmatrix}
  \omega\otimes   \widetilde{\delta}_{r;1}\\
  \omega\otimes \widetilde{\delta}_{r;2}
\end{pmatrix},
\end{align*}
where the second equality holds by \eqref{eq: delta r and delta l in W_d+1=0}, the third equality holds by Remark \ref{rem: tensor is not unusual}, the fifth equality holds by \eqref{eq: delta r in W_d+1=0} and Theorem \ref{thm: properties of Koszul regular algebras}(c), and $\mathcal{I}^j=\begin{pmatrix}
    \id^{\otimes j} &\\
    &\id^{\otimes j}
\end{pmatrix}$ for $j\geq 1$.
\end{proof}

It is a generalization of \cite[Corollary 3.1]{SG} which is for skew derivations in graded Ore extensions of Koszul AS-regular algebras. We give the following definition as an analogue to \cite[Definition 3.2]{SG}.

\begin{definition}\label{def: skew divergence}
Let $A=T(V)/(R)$ be a Koszul AS-regular algebra and   $B=A_P[y_1,y_2;\varsigma,\nu]$ be a graded double Ore extension of $A$ with $p_{12}\neq 0$. Let $(\delta_r,\delta_l)$ be a pair of elements in $V^{\oplus 2}$ with respect to some quadruple $(\{\delta_{i,r}\}, \{\delta_{i,l}\},\{\upsilon_{i,r}\},\{\upsilon_{i,l}\})$ for $\nu$. Then the element $\delta_r+{\mu_A^{\oplus2}}(\sigma^{-T}\cdot\delta_l)$ is called the \emph{$\varsigma$-divergence} of $\nu$, denoted by $\mathrm{div}_{\varsigma} \nu$.
\end{definition}

\subsection{Yoneda product} For a fixed quadruple $(\{\delta_{i,r}\}, \{\delta_{i,l}\},\{\upsilon_{i,r}\},\{\upsilon_{i,l}\})$, the minimal free resolution $F_\centerdot$ of $k_B$ in Theorem \ref{thm: minimial free resolution} becomes
{\footnotesize
\begin{equation*}
    \begin{aligned}
0&\xlongrightarrow{}
W_{d}\otimes B(-2)
\xlongrightarrow{
\left(
\begin{array}{cc}
\partial_{d} \\ -f_d 
\end{array}
\right)}
W_{d-1}\otimes B(-2)\oplus (W_d\otimes B(-1))^{\oplus2}
\xlongrightarrow{
\left(
\begin{array}{cc}
 \partial_{d-1} & 0 \\
 -f_{d-1} & -\partial_d^{\oplus2}\\
 h_{d-1}&g_d
 \end{array}
 \right)
 }
 \cdots\cdots\\
&\quad\xlongrightarrow{}
W_1\otimes B(-2)\oplus (W_{2}\otimes B(-1))^{\oplus2}\oplus W_3\otimes B 
\xlongrightarrow{
\left(
\begin{array}{ccc}
\partial_{1}& 0&  0\\ 
-f_1& -\partial_{2}^{\oplus2}&  0\\
h_1 &   g_2&   \partial_3
\end{array}
\right)
}
B(-2)\oplus (W_1\otimes B(-1))^{\oplus2}\oplus W_2\otimes B\\
&\qquad\qquad\qquad\qquad\xlongrightarrow{
\left(
\begin{array}{ccc}
-f_0&  -\partial_1^{\oplus2}&  0\\
0&   g_1&  \partial_2
\end{array}
\right)
}
B(-1)^{\oplus2}\oplus W_1\otimes B
\xlongrightarrow{
\left(
\begin{array}{cc}
g_0& \partial_1
\end{array}
\right)
} 
B\xlongrightarrow{} 0.
\end{aligned}
\end{equation*}
}
Hence, 
\begin{align*}
&E^{1}(B)=\uHom_{B}(B(-1)^{\oplus2}\oplus W_1\otimes B,k)\cong k^{\oplus2}(1)\oplus W_1^*,\\
&E^{d+1}(B)=\uHom_{B}(W_{d-1}\otimes B(-2)\oplus (W_{d}\otimes B(-1))^{\oplus2},k)\cong W_{d-1}^*(2)\oplus W_d^*(1)^{\oplus2},\\
&E^{d+2}(B)=\uHom_{B}(W_{d}\otimes B(-2),k)\cong W_{d}^*(2).
\end{align*}
Then 
\begin{itemize}
    \item $E^{1}(B)$ has a basis $\xi_1,\xi_2,x_1^*,x_2^*,\cdots,x^*_n$, where 
    $\{\xi_1,\xi_2\}$ is the canonical basis of $k^{\oplus 2}(1)$. We also identify 
    
$\xi_1$ and $\xi_2$ with $(\varepsilon_B,0)$ and $(0,\varepsilon_B)$ from $B(-1)^{\oplus 2}$ to $k$ respectively, and 

$x_i^*$ with the map $x^*_i\otimes\varepsilon_B:W_1\otimes B\to k$ for $i=1,\cdots,n$;

\item $E^{d+1}(B)$ has a basis $\eta_1^*,\eta_2^*,\cdots,\eta^*_n,\omega_1^*,\omega_2^*$.  We also identify

$\eta_i^*$ with the map $\eta^*_i\otimes\varepsilon_B:W_{d-1}\otimes B(-2)\to k$ for $i=1,\cdots,n$, and

$\omega_1^*$ and $\omega_2^*$ with $(\omega^*\otimes\varepsilon_B,0)$ and $(0,\omega^*\otimes\varepsilon_B)$ from $(W_d\otimes B(-1))^{\oplus 2}$ to $k$ respectively;

\item $E^{d+2}(B)$ has a basis $\widetilde{\omega}^*$. We also identify $\widetilde{\omega}^*$ with the map $\omega^*\otimes\varepsilon_B:W_d\otimes B(-2)\to k$. 
\end{itemize}

Before computing the Yoneda product of $E(B)$, we show that the elements $\xi_1$ and $\xi_2$ in $E^1(B)$ are also dual of $y_1,y_2$. 

Recall that $J=k\langle y_1,y_2\rangle /(y_2y_1-p_{12}y_1y_2-p_{11}y_1^2)$ and there is a surjective graded algebra homomorphism
$p_J:B\to J$ (see \eqref{eq: algebra homomorphism B to J}). By \cite[Theorem 1]{SWZ}, we have an injective graded algebra homomorphism $E(p_J):E(J)\to E(B)$. Clearly, $k_J$ has a minimal resolution as below
$$
0\to k\{y_2y_1-p_{12}y_1y_2-p_{11}y_1^2\}\otimes J\xlongrightarrow{\partial^J_2} 
k\{y_1,y_2\}\otimes J\xlongrightarrow{\partial^J_1} 
J\xlongrightarrow{}0,
$$
where $k\{y_2y_1-p_{12}y_1y_2-p_{11}y_1^2\}$ and $k\{y_1,y_2\}$ are vector spaces spanned by $y_2y_1-p_{12}y_1y_2-p_{11}y_1^2$ and $y_1,y_2$ respectively, and 
\begin{align*}
    &\partial^J_1(y_i\otimes 1)=y_i,\qquad i=1,2,\\
    &\partial^J_2((y_2y_1-p_{12}y_1y_2-p_{11}y_1^2)\otimes 1)=y_2\otimes y_1-p_{12}y_1\otimes y_2-p_{11}y_1\otimes y_1.
\end{align*}
So $E^1(J)\cong k\{y_1,y_2\}^*$ with a basis $\{y_1^*,y_2^*\}$.

\begin{lemma}
For any $i=1,2,$ $E(p_J)(y_i^*)=\xi_i$.
\end{lemma}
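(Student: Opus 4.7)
The plan is to use the comparison theorem to lift $p_J$ to a chain map between the beginnings of the two minimal free resolutions, then compute the pullback of $y_i^*$ directly. Recall that $E(p_J)$ is defined as follows: the $J$-resolution of $k_J$ restricts, via $p_J$, to a complex of right graded $B$-modules resolving $k_B$, so by the comparison theorem there exists a chain map $\widetilde{p}_\centerdot\colon F_\centerdot \to (P^J_\centerdot)|_B$ lifting $\id_{k}$, where $P^J_\centerdot$ is the given length-$2$ minimal resolution of $k_J$. The Yoneda pullback $E(p_J)(y_i^*)$ is then $y_i^* \circ \widetilde{p}_1$.

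The only piece I need is $\widetilde{p}_1$ in the relevant degrees. In degree zero we take $\widetilde{p}_0 = p_J\colon B \to J$, which is compatible with augmentations since $\varepsilon_J \circ p_J = \varepsilon_B$. For degree one I must produce a right $B$-module homomorphism
$$\widetilde{p}_1\colon B(-1)^{\oplus 2} \oplus W_1 \otimes B \longrightarrow k\{y_1,y_2\} \otimes J$$
with $\partial_1^{J}\widetilde{p}_1 = p_J\circ (g_0,\partial_1)$. Writing out the left-hand side on a general element $((b_1,b_2)^T, v\otimes b)$ gives $p_J(y_1 b_1 + y_2 b_2 + vb)$, and since $V \subseteq A_{\geq 1} = \ker p_J$, this equals $y_1 p_J(b_1) + y_2 p_J(b_2)$. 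Thus the unique $B$-linear extension of
$$\widetilde{p}_1\bigl((b_1,b_2)^T, v\otimes b\bigr) := y_1 \otimes p_J(b_1) + y_2 \otimes p_J(b_2)$$
does the job, since $\partial_1^{J}(y_j \otimes p_J(b_j)) = y_j p_J(b_j)$.

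With this chain lift in hand, a direct computation gives
$$
(y_i^* \circ \widetilde{p}_1)\bigl((b_1,b_2)^T, v\otimes b\bigr) = \delta_{i1}\varepsilon_J(p_J(b_1)) + \delta_{i2}\varepsilon_J(p_J(b_2)) = \delta_{i1}\varepsilon_B(b_1) + \delta_{i2}\varepsilon_B(b_2),
$$
which matches the identifications $\xi_1 = (\varepsilon_B, 0)$, $\xi_2 = (0,\varepsilon_B)$ on $B(-1)^{\oplus 2}$ (and is zero on $W_1\otimes B$). Hence $E(p_J)(y_i^*) = \xi_i$. There is essentially no obstacle here: the argument is entirely formal once one observes that $V \subseteq \ker p_J$, which both forces the definition of $\widetilde{p}_1$ and kills the $W_1\otimes B$ summand.
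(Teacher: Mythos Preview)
Your proof is correct and follows essentially the same approach as the paper: the paper constructs the identical chain lift in degree one (called $\mathscr{H}_1$ there) and performs the same evaluation. The only cosmetic difference is that the paper also writes down the degree-two component $\mathscr{H}_2$ to display a larger commutative square, but as you implicitly observe, this is unnecessary for identifying $E(p_J)$ on $E^1$.
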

\begin{proof}
It is straightforward to check that the following diagram is commutative
{\footnotesize
\begin{align*}
\hspace{-1cm} \xymatrix@C=65pt{
 (W_1\otimes B(-2)\oplus (W_{2}\otimes B(-1))^{\oplus2}\oplus W_3\otimes B \ar[r]^{\begin{pmatrix}
\partial_{1}& 0&  0\\ 
-f_1& -\partial_{2}^{\oplus2}&  0\\
h_1 &   g_2&   \partial_3
\end{pmatrix}}\ar[d]^{0}
&
B(-2)\oplus (W_1\otimes B(-1))^{\oplus2}\oplus W_2\otimes B\ar[r]^(0.7){
\begin{pmatrix}
-f_0&  -\partial_1^{\oplus2}&  0\\
0&   g_1&  \partial_2
\end{pmatrix}
}\ar[d]^{\mathscr{H}_2}&\\
 ( 0\ar[r] & k\{y_2y_1-p_{12}y_1y_2-p_{11}y_1^2\}\otimes J\ar[r]^(0.63){\partial^J_2}&~\\ 
}\\
\xymatrix@C=37pt{
B(-1)^{\oplus2}\oplus W_1\otimes B\ar[d]^{\mathscr{H}_1}
\ar[r]^(0.65){
\begin{pmatrix}
g_0& \partial_1
\end{pmatrix}
} &
B\ar[r]\ar[d]^{p_J}& 0)\quad\ar[r]^{\varepsilon_B}\ar[d]^0& \\
k\{y_1,y_2\}\otimes J\ar[r]^{\partial^J_1} &
J\ar[r]&0)\quad\ar[r]^{\varepsilon_J}&
}
\xymatrix{
k_B\ar@{=}[d]\\
k_J,
}
\end{align*}
}
where for any $b,b_1,b_2,b_3\in B$, $\alpha,\alpha_1,\alpha_2\in W_1$ and $\beta\in W_2$, 
\begin{align*}
\mathscr{H}_1
\begin{pmatrix}
    \begin{pmatrix}
        b_1\\
        b_2
    \end{pmatrix}\\
    \alpha\otimes b
\end{pmatrix}
=y_1\otimes p_J(b_1)+  y_2\otimes p_J(b_2), \qquad 
\mathscr{H}_2 
\begin{pmatrix}
    b\\
    \begin{pmatrix}
       \alpha_1\otimes b_1\\
        \alpha_2\otimes b_2
    \end{pmatrix}\\
    \beta\otimes b_3
\end{pmatrix}
=(y_2y_1-p_{12}y_1y_2-p_{11}y_1^2)\otimes p_J(b).
\end{align*}
Then
\begin{align*}
    E(p_Y)(y_1^*)\begin{pmatrix}
    \begin{pmatrix}
        b_1\\
        b_2
    \end{pmatrix}\\
    \alpha\otimes b
\end{pmatrix}&=(y_1^*\otimes\varepsilon_J)\mathscr{H}_1 \begin{pmatrix}\begin{pmatrix}
        b_1\\
        b_2
    \end{pmatrix}\\
    \alpha\otimes b
\end{pmatrix}=\varepsilon_Jp_J(b_1)=\left((\varepsilon_B,0),0\right)\begin{pmatrix}\begin{pmatrix}
        b_1\\
        b_2
    \end{pmatrix}\\
    \alpha\otimes b
\end{pmatrix},\\
   E(p_Y)(y_2^*)\begin{pmatrix}
    \begin{pmatrix}
        b_1\\
        b_2
    \end{pmatrix}\\
    \alpha\otimes b
\end{pmatrix}
&=(y_2^*\otimes\varepsilon_J)\mathscr{H}_1 \begin{pmatrix}\begin{pmatrix}
        b_1\\
        b_2
    \end{pmatrix}\\
    \alpha\otimes b
\end{pmatrix}=\varepsilon_Jp_J(b_2)=\left((0,\varepsilon_B),0\right)\begin{pmatrix}\begin{pmatrix}
        b_1\\
        b_2
    \end{pmatrix}\\
    \alpha\otimes b
\end{pmatrix}.
\end{align*}
The result follows.
\end{proof}

Let $(\delta_{r},\delta_l)$ be a pair of elements in $V^{\oplus 2}$ with respect to the fixed quadruple $(\{\delta_{i,r}\}, \{\delta_{i,l}\},\{\upsilon_{i,r}\},\{\upsilon_{i,l}\})$. We have a result for the Yoneda product of $E(B)$ as below.

\begin{lemma}\label{lemma: Yoneda product}
In $E(B)$, for any $i,j=1,\cdots,n$,
\begin{align*}
\begin{pmatrix}
\xi_1\ast\omega_1^* & \xi_2\ast\omega_1^* &x^*_i\ast\omega_1^* \\
\xi_1\ast\omega_2^* & \xi_2\ast\omega_2^* & x^*_i\ast\omega_2^*\\
\xi_1\ast\eta_j^* & \xi_2\ast\eta_j^* & x^*_i\ast\eta_j^*    
\end{pmatrix}&=
(-1)^d
\begin{pmatrix}
\mathbb{J}\hdet\varsigma& \mathbb{J}\left((x_i^*)^{\oplus 2}\delta_{r}\right)\\
0
&-(\eta_j^*\otimes x^*_i)(\omega)
\end{pmatrix}\widetilde{\omega}^*,\\
\begin{pmatrix}
\omega_1^*\ast\xi_1 & \omega_1^*\ast\xi_2& \omega_1^*\ast x^*_i\\
\omega_2^*\ast\xi_1 & \omega_2^*\ast\xi_2 &\omega_2^*\ast x^*_i \\
\eta_j^*\ast\xi_1&\eta_j^*\ast \xi_2&\eta_j^*\ast x^*_i    
\end{pmatrix}&=
\begin{pmatrix}
\mathbb{J}^T& 
(x_i^*)^{\oplus2}\left(\sigma^T\cdot(\mathbb{J}\delta_l)\right)\\
0&(-1)^{d+1}(x^*_i\det\sigma\otimes\eta_j^*)(\omega)
\end{pmatrix}\widetilde{\omega}^*.
\end{align*}
\end{lemma}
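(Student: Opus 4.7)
The plan is to compute every entry of both matrices by lifting cocycles along the explicit minimal free resolution $F_\bullet$ of $k_B$ given by Theorem \ref{thm: minimial free resolution}. For $\alpha\in E^{1}(B)$ and $\beta\in E^{d+1}(B)$, I will represent $\alpha\ast\beta\in E^{d+2}(B)\cong k\cdot\widetilde\omega^*$ by $\beta\circ\widetilde\alpha_{d+1}$, where $\widetilde\alpha_\bullet:F_\bullet\to F_\bullet[1]$ is a right $B$-linear chain lift of $\alpha$. Since $F_{d+2}=W_d\otimes B(-2)$ is free of rank one on $\omega\otimes 1$, every matrix entry will be read off by evaluating the relevant lift on $\omega$ and extracting the coefficient of $\widetilde\omega^*$. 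All of the structural ingredients are already in hand: Theorem \ref{thm: hdet is sigma d} controls $\sigma^{\boxtimes d}(\omega)$; the components $f_i$, $g_i$, $h_i$ are explicit in the quadruple $(\{\delta_{i,r}\},\{\delta_{i,l}\},\{\upsilon_{i,r}\},\{\upsilon_{i,l}\})$; and the cancellations needed at the top degree are supplied by $W_{d+1}=0$, Lemma \ref{lem: realtions between Gamma}(c), and the consequences \eqref{eq: delta r in W_d+1=0}--\eqref{eq: delta r and delta l in W_d+1=0}.

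For the first matrix I would lift the degree-one cocycles. The natural lift of $\xi_i$ is forced by the $\mathbb{J}\bullet\!\bigl(\sigma^{\boxtimes i}\boxtimes\lambda_Y+\cdots\bigr)$ shape of $f_i$: it extracts the $B(-1)^{\oplus 2}$-coordinate according to $\mathbb{J}$. A lift of $x_i^*$ is the projection $x_i^*\otimes\id_B$ on the $W_1\otimes B$ summand in lowest degree, extended upward through the $\partial$-columns of $F_\bullet$. Evaluating on $\omega\otimes 1\in W_d\otimes B_0$, the $\xi_i$-lift produces a factor $\mathbb{J}\hdet\varsigma$ from the $\sigma^{\boxtimes d}\boxtimes\lambda_Y$-piece of $f_d$ via Theorem \ref{thm: hdet is sigma d}, while the $x_i^*$-lift picks up the additional contribution $\mathbb{J}\bigl((x_i^*)^{\oplus 2}\delta_r\bigr)$ from the $\delta_{d,r}$-piece of $f_d$, using $\delta_{d,r}(\omega)=(\omega\otimes\delta_{r;j})^T$. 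Pairing with $\omega_1^*,\omega_2^*,\eta_j^*$ then yields the first matrix, and the overall sign $(-1)^d$ comes from the twisted-superpotential identity in Theorem \ref{thm: properties of Koszul regular algebras}(c) relating $\omega$ and its cyclic $\mu_A$-twist.

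For the second matrix it is cleaner to lift the degree-$(d+1)$ cocycles one step along the top differential $\bigl(\partial_d,-f_d\bigr)^T$. A lift of $\omega_j^*$ is constrained by the $-f_d$-column, and the $\lambda_Y$-piece of $f_d$ contributes $\mathbb{J}^T$ on the $\xi$-block; a lift of $\eta_j^*$ is constrained by $\partial_d=\id_V^{\otimes d-1}\otimes m_B$. The top-right entry $(x_i^*)^{\oplus 2}\bigl(\sigma^T\!\cdot\!(\mathbb{J}\delta_l)\bigr)$ then arises by transporting the datum $\delta_{d,l}(\omega)=(\delta_{l;j}\otimes\omega)^T$ through the $\delta_{d,r}$-portion of $f_d$, combined with Proposition \ref{prop: invertible means T-invertible} and Theorem \ref{thm: hdet is sigma d}. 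The bottom-right entry $(-1)^{d+1}(x_i^*\det\sigma\otimes\eta_j^*)(\omega)$ comes from composing $\partial_d^T$ with $\id^{\otimes d-1}\otimes x_i^*$ acting on $W_d$, after accounting for the $\det\sigma$-twist built into $\sigma^{\boxtimes d}$ by Lemma \ref{lem: relation between Theta^{(i)}}.

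The main obstacle will be purely bookkeeping rather than conceptual. The differentials of $F_\bullet$ are built from the quadruple via a two-step mapping cone, so a single lift of a cocycle produces several competing terms per summand, and I will need almost all of them to cancel on $W_d\otimes B_0$. These cancellations are not accidental: they are governed precisely by the $W_{d+1}=0$ identities \eqref{eq: delta r in W_d+1=0}--\eqref{eq: delta r and delta l in W_d+1=0} together with Lemma \ref{lem: realtions between Gamma}(c), and what survives assembles exactly into the four building blocks $\hdet\varsigma$, $\delta_r$, $\delta_l$, and the pairing $(\eta_j^*\otimes x_i^*)(\omega)$ (together with their $\det\sigma$-twisted variants). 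I expect the resulting argument to be long enough that deferring the detailed execution to a separate section, as the paper does, is the practical way to present it.
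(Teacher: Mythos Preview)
Your overall strategy is the paper's: lift cocycles through $F_\bullet$ and read off the coefficient of $\widetilde\omega^*$. The paper organises this into two auxiliary commutative diagrams (Lemmas~\ref{lem: E^1(B) E^{d+1}(B)} and~\ref{lem: E{d+1}(B) ast E1(B)}), a one-step lift of a general $E^{d+1}$-cocycle $a_0\eta_j^*+a_1\omega_1^*+a_2\omega_2^*$ and a full $(d{+}1)$-step lift of a general $E^1$-cocycle $a_1\xi_1+a_2\xi_2+a_0x_i^*$, and then simply evaluates.

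The substantive issue is that you have the two lifts attached to the wrong matrices. In the paper's convention the Yoneda product is $\alpha\ast\beta=\alpha\circ\widetilde\beta$ (one lifts the \emph{right} factor), not $\beta\circ\widetilde\alpha$. Hence the \emph{first} matrix comes from the easy one-step lift of $\omega_j^*,\eta_j^*$: the map $\varphi_2:F_{d+2}\to F_1$ is read off directly from the top differential $(\partial_d,-f_d)^T$, and Theorem~\ref{thm: hdet is sigma d} applied to the $\sigma^{\boxtimes d}\boxtimes\lambda_Y$ piece of $f_d$ produces the block $\mathbb{J}\hdet\varsigma$, while $\delta_{d,r}(\omega)$ gives the column $\mathbb{J}\bigl((x_i^*)^{\oplus 2}\delta_r\bigr)$. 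The sign $(-1)^d$ is simply the sign needed to make that one square commute, not a consequence of the twisted-superpotential identity. Conversely the \emph{second} matrix requires the long lift of $\xi_i,x_i^*$; at the top this lift carries $\mathbb{J}^T$ on the $\xi$-block (not $\mathbb{J}\hdet\varsigma$), and the $\delta_l$ and $\det\sigma$ contributions enter through operators $\Lambda_{d+1}\bullet\mathbb{J}\bullet(\delta_{d,l}\boxtimes\id_B)$ and $(x_i^*\det\sigma\otimes\id_V^{\otimes d-1})\otimes\id_B$ built into the lift of $x_i^*$, whose compatibility with the mapping-cone differentials is verified inductively from Lemmas~\ref{lem: relation between Theta^{(i)}}, \ref{lemma: construction of delta_r and delta_l} and~\ref{lem: construction of upsilon l}. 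Executed as written---lifting $\xi_i$ for the first matrix and $\omega_j^*$ for the second---your computation would output the two displayed matrices interchanged; once you swap which factor is lifted, the rest of your outline matches the paper's argument.
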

The proof of the lemma above is tedious. To make reading more fluent, it will be addressed in Section \ref{sec: proof}.

\subsection{Nakayama automorphisms}
For the linear automorphism $\det \sigma$ of $V$ and the Nakayama automorphism $\mu_A$ of $A$, there exist two invertible $n\times n$ matrices $U$ and $L$ over $k$, such that
$$\det\sigma
\left(
\begin{array}{c}
x_1\\
x_2\\
\vdots\\
x_n
\end{array}
\right)=U\left(
\begin{array}{c}
x_1\\
x_2\\
\vdots\\
x_n
\end{array}
\right),\qquad
\mu_A
\left(
\begin{array}{c}
x_1\\
x_2\\
\vdots\\
x_n
\end{array}
\right)=L\left(
\begin{array}{c}
x_1\\
x_2\\
\vdots\\
x_n
\end{array}
\right).
$$

\begin{lemma}\cite[Lemma 3.8]{SG}\label{lemma: Yoneda product in E(A)}
For any $j=1,\cdots,n$,
$$
\left(
\begin{array}{c}
(\eta_j^*\otimes x^*_1)(\omega)\\
(\eta_j^*\otimes x^*_2)(\omega)\\
\vdots\\
(\eta_j^*\otimes x^*_n)(\omega)
\end{array}
\right)
=(-1)^{d-1}L^T\left(
\begin{array}{c}
 (x^*_1\otimes\eta^*_j)(\omega)\\
(x^*_2\otimes\eta^*_j)(\omega)\\
\vdots\\
(x^*_n\otimes\eta^*_j)(\omega)
\end{array}
\right).
$$
\end{lemma}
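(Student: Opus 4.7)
The plan is to exploit the twisted superpotential property of $\omega$ stated in Theorem~\ref{thm: properties of Koszul regular algebras}(c), namely
$$
\omega=(-1)^{d-1}\tau_{d}^{d-1}({\mu_A}_{\mid V}\otimes\id^{\otimes d-1})(\omega),
$$
together with the two natural expansions of $\omega$ coming from the inclusions $W_d\subseteq V\otimes W_{d-1}$ and $W_d\subseteq W_{d-1}\otimes V$.

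First I would fix bases and notation: since $\{x_i\otimes\eta_j\}_{i,j}$ is a basis of $V\otimes W_{d-1}$ and $\{\eta_i\otimes x_j\}_{i,j}$ is a basis of $W_{d-1}\otimes V$, one can expand $\omega$ uniquely in both ways, say
$$
\omega=\sum_{i,j}c_{ij}\,x_i\otimes\eta_j=\sum_{j,k}d_{jk}\,\eta_j\otimes x_k,
$$
with $c_{ij}=(x_i^*\otimes\eta_j^*)(\omega)$ and $d_{jk}=(\eta_j^*\otimes x_k^*)(\omega)$. These are precisely the entries appearing on the right- and left-hand sides of the claimed identity.

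Next I would apply the twisted superpotential identity to the first expansion. Using $\mu_A(x_i)=\sum_k L_{ik}x_k$ and the fact that $\tau_d^{d-1}$ is the cyclic shift sending $v_1\otimes v_2\otimes\cdots\otimes v_d$ to $v_2\otimes\cdots\otimes v_d\otimes v_1$, one computes
$$
(-1)^{d-1}\tau_d^{d-1}(\mu_A\otimes\id^{\otimes d-1})(\omega)=(-1)^{d-1}\sum_{i,j,k}c_{ij}L_{ik}\,\eta_j\otimes x_k.
$$
Comparing coefficients with $\omega=\sum_{j,k}d_{jk}\,\eta_j\otimes x_k$ forces
$$
d_{jk}=(-1)^{d-1}\sum_{i}L_{ik}\,c_{ij}=(-1)^{d-1}\sum_{i}(L^T)_{ki}\,c_{ij},
$$
which is exactly the matrix identity in the statement once one reads off the $k$-th entry on the left-hand side and the $i$-th entry on the right-hand side.

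There is no serious obstacle here beyond index bookkeeping: the only subtlety is to make sure the orientation of $\tau_d^{d-1}$ and the matrix convention for $L$ (the convention $\mu_A(x_i)=\sum_k L_{ik}x_k$ established just before the lemma) line up so that the matrix acting on the right comes out as $L^T$ rather than $L$. All other ingredients are immediate from the uniqueness of the expansions in $V\otimes W_{d-1}$ and $W_{d-1}\otimes V$, and from the twisted superpotential property, which we are allowed to cite.
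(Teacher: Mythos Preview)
Your argument is correct. The paper does not supply its own proof of this lemma at all: it is quoted verbatim from \cite[Lemma~3.8]{SG} and simply cited, so there is no ``paper's proof'' to compare against beyond the reference. Your derivation via the twisted superpotential identity $\omega=(-1)^{d-1}\tau_d^{d-1}(\mu_A\otimes\id^{\otimes d-1})(\omega)$ and coefficient comparison in the two bases of $V\otimes W_{d-1}$ and $W_{d-1}\otimes V$ is exactly the natural argument, and is presumably what \cite{SG} does as well; the index conventions are handled correctly.
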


We prove one main result of this paper.

\begin{theorem}\label{thm: nakayama automorphism of double Ore extension} Let $B=A_P[y_1,y_2;\varsigma,\nu]$ be a graded double Ore extension of a Koszul AS-regular algebra $A$, where $P=\{p_{12}\neq0,p_{11}\}\subseteq k$, $\varsigma:A\to M_{2\times 2}(A)$ is an invertible graded algebra homomorphism and $\nu:A\to A^{\oplus2}$ is a degree one $\varsigma$-derivation. Then the Nakayama automorphism $\mu_B$ of $B$ satisfies
\begin{align*}
{\mu_{B}}_{\mid A}=(\det\varsigma)^{-1}\mu_A,\qquad
\mu_B\begin{pmatrix}
    y_1\\y_2
\end{pmatrix}
=-(\mathbb{J}^T)^{-1}\mathbb{J}\hdet\varsigma
\begin{pmatrix}
    y_1\\y_2
\end{pmatrix}
-(\mathbb{J}^T)^{-1}\mathbb{J} \mathrm{div}_{\varsigma}\nu.
\end{align*}
\end{theorem}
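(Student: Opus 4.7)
Since $B$ is Koszul AS-regular of dimension $d+2$ (by Proposition \ref{prop: invertible iff double ore extension} and Corollary \ref{cor: double ore extension preserve Koszul}), the Ext-algebra $E(B)$ is graded Frobenius, and by Van den Bergh (Theorem \ref{thm: properties of Koszul regular algebras}(b)) its Nakayama automorphism $\mu_E$ restricts on $E^1(B)\cong B_1^*$ to $(\mu_B|_{B_1})^*$. Because $B$ is generated in degree one, determining $\mu_E|_{E^1(B)}$ determines $\mu_B$ completely. The plan is to read off $\mu_E|_{E^1(B)}$ from the Frobenius relation
\[ \alpha \ast \beta = (-1)^{d+1}\,\beta \ast \mu_E(\alpha), \qquad \alpha \in E^1(B),\ \beta \in E^{d+1}(B), \]
by plugging in the Yoneda products supplied by Lemma \ref{lemma: Yoneda product}, and then translate back via Van den Bergh.

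Expand $\mu_E(\xi_i) = \sum_s A_{si}\xi_s + \sum_k B_{ki}x_k^*$ and $\mu_E(x_i^*) = \sum_s P_{si}\xi_s + \sum_k Q_{ki}x_k^*$, and substitute into the Frobenius relation for each pair of basis elements of $E^1(B)$ and $E^{d+1}(B)$. The bottom-left vanishings in both Yoneda tables, together with the formula for $\eta_j^* \ast x_k^*$ and the nondegeneracy of the Koszul duality pairing $((x_l^*\otimes\eta_j^*)(\omega))_{l,j}$, force $UB = 0$ and hence $B = 0$. The top-left block then reduces to $\mathbb{J}^T A = -\mathbb{J}\,\hdet\varsigma$, giving $A = -(\mathbb{J}^T)^{-1}\mathbb{J}\,\hdet\varsigma$. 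Applying the Frobenius relation to $(x_l^*, \eta_j^*)$ and invoking Lemma \ref{lemma: Yoneda product in E(A)} produces the matrix identity $UQ = L$, whence $Q = U^{-1}L$; dualizing $Q$ recovers $\mu_B|_A = (\det\varsigma)^{-1}\mu_A$.

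The remaining block yields $\mathbb{J}^T P + TQ = -\mathcal{D}$, where $\mathcal{D}_{jl} = (\mathbb{J}((x_l^*)^{\oplus 2}\delta_r))_j$ and $T_{jk} = ((x_k^*)^{\oplus 2}(\sigma^T\cdot(\mathbb{J}\delta_l)))_j$. The main obstacle is repackaging $P$ into the divergence form stated in the theorem. Here I would first note the elementary identities $\sum_i \mathcal{D}_{ji}x_i = (\mathbb{J}\delta_r)_j$ and $\sum_i Q_{ki}x_i = \mu_B(x_k)$, and then exploit the explicit formula \eqref{eq: t-inverse of sigma} for $\sigma^{-T}$: composing $\mathbb{J}\bullet-$ on the left of \eqref{eq: t-inverse of sigma} and evaluating on $\delta_l$ via $(f\bullet g)\cdot v = f\cdot(g\cdot v)$ produces the key identity
\[ (\det\sigma)^{-1\oplus 2}\bigl(\sigma^T\cdot(\mathbb{J}\delta_l)\bigr) = \mathbb{J}\bigl(\sigma^{-T}\cdot\delta_l\bigr). \]
Since $\mathbb{J}$ has scalar entries it commutes with $\mu_A^{\oplus 2}$, so after applying $\mu_A^{\oplus 2}$ the term $\sum_k T_{jk}\mu_B(x_k)$ appearing in $TQ$ collapses to $(\mathbb{J}\,\mu_A^{\oplus 2}(\sigma^{-T}\cdot\delta_l))_j$. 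Substituting back and multiplying by $-(\mathbb{J}^T)^{-1}$ yields
\[ \sum_i P_{ki}x_i = -\bigl((\mathbb{J}^T)^{-1}\mathbb{J}\,(\delta_r+\mu_A^{\oplus 2}(\sigma^{-T}\cdot\delta_l))\bigr)_k = -\bigl((\mathbb{J}^T)^{-1}\mathbb{J}\,\mathrm{div}_\varsigma\nu\bigr)_k, \]
which is exactly the $V$-component of $\mu_B(y_k)$. Combined with the formula for $A$, which gives the $y$-component, this produces the displayed expression for $\mu_B(y_1,y_2)^T$ and completes the proof.
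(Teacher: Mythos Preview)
Your proposal is correct and follows essentially the same route as the paper's own proof: both compute $\mu_{E(B)}$ on $E^1(B)$ via the Frobenius relation together with the Yoneda products of Lemma~\ref{lemma: Yoneda product}, invoke Lemma~\ref{lemma: Yoneda product in E(A)} for the $\eta_j^*$ pairings, and use the $T$-inverse identity \eqref{eq: t-inverse of sigma} (equivalently Proposition~\ref{prop: invertible means T-invertible}) to convert $\sigma^T\cdot(\mathbb{J}\delta_l)$ into $\mathbb{J}(\sigma^{-T}\cdot\delta_l)$ and assemble $\mathrm{div}_\varsigma\nu$. Your block-matrix bookkeeping $(A,B,P,Q,T,\mathcal{D})$ is a tidy repackaging of exactly the computations the paper carries out entrywise, with no substantive difference in strategy.
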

\begin{proof}
By Theorem \ref{thm: properties of Koszul regular algebras}, $E(B)$ is a graded Frobenius algebra with a Nakayama automorphism $\mu_{E(B)}$. We compute $\mu_{E(B)}(\xi_1),\mu_{E(B)}(\xi_2)$ firstly. By Lemma \ref{lemma: Yoneda product}, we have  
\begin{align*}
\begin{pmatrix}
\langle\xi_1,\omega_1^*\rangle & \langle\xi_2,\omega_1^*\rangle \\
 \langle\xi_1,\omega_2^*\rangle & \langle\xi_2,\omega_2^*\rangle\\
   \langle\xi_1,\eta_j^*\rangle  & \langle\xi_2,\eta_j^*\rangle
\end{pmatrix}=&(-1)^d
\begin{pmatrix}
    \mathbb{J}\hdet\varsigma\\
    0
\end{pmatrix}=(-1)^{d+1}
\begin{pmatrix}
    \mathbb{J}^T\\
    0
\end{pmatrix}\left(-\left(\mathbb{J}^{-1}\right)^T\mathbb{J}\right)\hdet\varsigma\\
=&(-1)^{d+1}
\begin{pmatrix}
    \langle\omega_1^*,\xi_1\rangle &  \langle\omega_1^*,\xi_2\rangle   \\
    \langle\omega_2^*,\xi_1\rangle & \langle\omega_2^*,\xi_2 \rangle \\
     \langle\eta_j^*,\xi_1\rangle &   \langle\eta_j^*,\xi_2\rangle
\end{pmatrix}\left(-\left(\mathbb{J}^{-1}\right)^T\mathbb{J}\right)\hdet\varsigma,
\end{align*}
So
$$\mu_{E(B)}(
    \xi_1\ \xi_2
)=(
    \xi_1\ \xi_2
)\left(-\left(\mathbb{J}^{-1}\right)^T\mathbb{J}\right)\hdet\varsigma.$$

It turns to consider $\mu_{E(B)}(x_i)$ for $i=1,\cdots,n$. By Proposition \ref{prop: invertible means T-invertible} and Lemma \ref{lemma: Yoneda product}, one obtains that 
$$
\begin{pmatrix}
\langle x_1^*, \omega_1^*\rangle &   \langle x_2^*,\omega_1^*\rangle &\cdots&\langle x_n^*,\omega_1^*\rangle \\
\langle x_1^*,\omega_2^*\rangle &   \langle x_2^*,\omega_2^*\rangle &\cdots&\langle x_n^*,\omega_2^*\rangle     
\end{pmatrix}=
(-1)^d\mathbb{J}
\begin{pmatrix}
(x_1^*)^{\oplus 2}(\delta_r)    &    (x_2^*)^{\oplus 2}(\delta_r)   &\cdots&    (x_n^*)^{\oplus 2}(\delta_r)   
\end{pmatrix},
$$
and
\begin{align*}
&\begin{pmatrix}
\langle \omega_1^*,x_1^*\rangle &   \langle \omega_1^*,x_2^*\rangle &\cdots&\langle \omega_1^*,x_n^*\rangle \\
\langle \omega_2^*,x_1^*\rangle &   \langle \omega_2^*,x_2^*\rangle &\cdots&\langle \omega_2^*,x_n^*\rangle     
\end{pmatrix}\\
=&
\left(
(x_1^*)^{\oplus2}\left(\sigma^T\cdot(\mathbb{J}\delta_l)\right) \ \  (x_2^*)^{\oplus2}\left(\sigma^T\cdot(\mathbb{J}\delta_l)\right)\ \ 
\cdots \ \ 
(x_n^*)^{\oplus2}\left(\sigma^T\cdot(\mathbb{J}\delta_l)\right)
\right)\\
=&\mathbb{J}
\left(
(x_1^*\det\sigma)^{\oplus2}(\sigma^{-T}\cdot\delta_l) \ \  (x_2^*\det\sigma)^{\oplus2}(\sigma^{-T}\cdot\delta_l))\ \ 
\cdots \ \ 
(x_n^*\det\sigma)^{\oplus2}(\sigma^{-T}\cdot\delta_l)
\right)\\
=&\mathbb{J}
\left(
(x_1^*)^{\oplus2}(\sigma^{-T}\cdot\delta_l) \ \  (x_2^*)^{\oplus2}(\sigma^{-T}\cdot\delta_l))\ \ 
\cdots \ \ 
(x_n^*)^{\oplus2}(\sigma^{-T}\cdot\delta_l)
\right)U.
\end{align*}

Write
$
\begin{pmatrix}
c_{11}  &   c_{21} &    \cdots  &   c_{n1}\\
c_{12}  &   c_{22} &    \cdots  &   c_{n2}
\end{pmatrix}=\left(
(x_1^*)^{\oplus 2}(\mathrm{div}_{\varsigma}\nu) \ \  (x_2^*)^{\oplus 2}(\mathrm{div}_{\varsigma}\nu)\ \ \cdots\ \  (x_n^*)^{\oplus 2}(\mathrm{div}_{\varsigma}\nu)
\right),
$ that is,
\begin{align*}
\begin{pmatrix}
c_{11}  &   c_{21} &    \cdots  &   c_{n1}\\
c_{12}  &   c_{22} &    \cdots  &   c_{n2}
\end{pmatrix}=&
\left(
   (x_1^*)^{\oplus 2}(\delta_{r}) \ \  (x_2^*)^{\oplus 2}(\delta_{r}) \ \  \cdots  \ \  (x_n^*)^{\oplus 2}(\delta_{r}) 
\right)\\
&+
\begin{pmatrix}
   (x_1^*)^{\oplus 2}(\sigma^{-T}\cdot\delta_{l})   \ \     (x_2^*)^{\oplus 2}(\sigma^{-T}\cdot\delta_{l}) \ \   \cdots \ \  (x_n^*)^{\oplus 2}(\sigma^{-T}\cdot\delta_{l})
\end{pmatrix}L.
\end{align*}

By Lemma \ref{lemma: Yoneda product}, we have
\begin{align*}
\begin{pmatrix}
\langle x_1^*, \omega_1^*\rangle &   \langle x_2^*,\omega_1^*\rangle &\cdots&\langle x_n^*,\omega_1^*\rangle \\
\langle x_1^*,\omega_2^*\rangle &   \langle x_2^*,\omega_2^*\rangle &\cdots&\langle x_n^*,\omega_2^*\rangle     
\end{pmatrix}
=&
(-1)^{d+1}\begin{pmatrix}
\langle \omega_1^*,x_1^*\rangle &   \langle \omega_1^*,x_2^*\rangle &\cdots&\langle \omega_1^*,x_n^*\rangle \\
\langle \omega_2^*,x_1^*\rangle &   \langle \omega_2^*,x_2^*\rangle &\cdots&\langle \omega_2^*,x_n^*\rangle     
\end{pmatrix}U^{-1}L\\
&+(-1)^d\mathbb{J}^T(\mathbb{J}^T)^{-1}\mathbb{J}\begin{pmatrix}
c_{11}  &   c_{21} &    \cdots  &   c_{n1}\\
c_{12}  &   c_{22} &    \cdots  &   c_{n2}
\end{pmatrix}\\
=&(-1)^{d+1}\begin{pmatrix}
\langle \omega_1^*,x_1^*\rangle &   \langle \omega_1^*,x_2^*\rangle &\cdots&\langle \omega_1^*,x_n^*\rangle \\
\langle \omega_2^*,x_1^*\rangle &   \langle \omega_2^*,x_2^*\rangle &\cdots&\langle \omega_2^*,x_n^*\rangle     
\end{pmatrix}U^{-1}L\\
&+(-1)^{d+1}\begin{pmatrix}
     \langle \omega_1^*,\xi_1\rangle & \langle \omega_1^*,\xi_2\rangle \\
      \langle \omega_2^*,\xi_1\rangle & \langle \omega_2^*,\xi_2\rangle 
 \end{pmatrix}(-(\mathbb{J}^T)^{-1}\mathbb{J}) \begin{pmatrix}
c_{11}  &   c_{21} &    \cdots  &   c_{n1}\\
c_{12}  &   c_{22} &    \cdots  &   c_{n2}
\end{pmatrix}.
\end{align*}
Using Lemma \ref{lemma: Yoneda product}, it is not hard to obtain that
\begin{align*}
\begin{pmatrix}
\langle x_1^*, \eta_j^*\rangle &   \langle x_2^*,\eta_j^*\rangle &\cdots&\langle x_n^*,\eta_j^*\rangle   
\end{pmatrix}
=&
(-1)^{d+1}\begin{pmatrix}
\langle \eta_j^*,x_1^*\rangle &   \langle \eta_j^*,x_2^*\rangle &\cdots&\langle \eta_j^*,x_n^*\rangle 
\end{pmatrix}U^{-1}L\\
&+(-1)^{d+1}\begin{pmatrix}
     \langle \eta_j^*,\xi_1\rangle & \langle \eta_j^*,\xi_2\rangle
 \end{pmatrix}(-(\mathbb{J}^T)^{-1}\mathbb{J})   \begin{pmatrix}
c_{11}  &   c_{21} &    \cdots  &   c_{n1}\\
c_{12}  &   c_{22} &    \cdots  &   c_{n2}
\end{pmatrix}.
\end{align*}

Hence,
$$
\mu_{E(B)}(x_1^*,x_2^*,\cdots,x_n^*,\xi_1,\xi_2)=(x_1^*,x_2^*,\cdots,x_n^*,\xi_1,\xi_2)
\begin{pmatrix}
    U^{-1}L &  0 \\
    -\left(\mathbb{J}^T\right)^{-1}\mathbb{J}\begin{pmatrix}
        c_{11}&\cdots&c_{n1}\\
        c_{12}&\cdots&c_{n2}
        \end{pmatrix} & -\left(\mathbb{J}^T\right)^{-1}\mathbb{J}\hdet\varsigma
\end{pmatrix}.
$$
By Theorem \ref{thm: properties of Koszul regular algebras}, we have 
$$
\mu_{B}\begin{pmatrix}
   x_1\\
    x_2\\
    \vdots\\
    x_n\\
    y_1\\
    y_2
\end{pmatrix}=
\begin{pmatrix}
    U^{-1}L &  0 \\
    -\left(\mathbb{J}^T\right)^{-1}\mathbb{J}\begin{pmatrix}
        c_{11}&\cdots&c_{n1}\\
        c_{12}&\cdots&c_{n2}
        \end{pmatrix} & -\left(\mathbb{J}^T\right)^{-1}\mathbb{J}\hdet\varsigma
\end{pmatrix}
\begin{pmatrix}
   x_1\\
    x_2\\
    \vdots\\
    x_n\\
    y_1\\
    y_2
\end{pmatrix}.
$$
The proof is completed.
\end{proof}

\begin{remark}
The matrix 
$$
-\left(\mathbb{J}^T\right)^{-1}\mathbb{J}=
\begin{pmatrix}
    p_{12}^{-1} & 0\\
    p_{11}(1+p_{12}^{-1})& p_{12}
\end{pmatrix},
$$
appearing in the Nakayama automorphism $\mu_B$ of graded double Ore extension $B=A_P[y_1,y_2;\sigma,\delta]$, is exactly the one corresponding to the Nakayama automorphism of $J=k\langle Y\rangle/(Y^T\mathbb{J}Y)$.
\end{remark}

\subsection{Twisted superpotential}

By Corollary \ref{cor: double ore extension preserve Koszul}, Proposition \ref{prop: invertible iff double ore extension} and Theorem \ref{thm: properties of Koszul regular algebras}, graded double Ore extension $B=A_P[y_1,y_2;\varsigma,\nu]$ of Koszul AS-regular algebra $A$ is defined by a twisted superpotential. In this subsection, we describe such twisted superpotentials precisely. It generalizes the result for graded Ore extensions of Koszul AS-regular algebras (see \cite[Theorem 3.11]{SG}), and it is much more complicated than the one for graded Ore extensions.

\begin{theorem}\label{thm: twisted superpotential}
Let $A=T(V)/(R)$ be a Koszul AS-regular algebra of dimension $d$ and $\omega$ be a basis of $W_d$. Let $B=A_P[y_1,y_2;\varsigma,\nu]$ be a graded double Ore extension of $A$ with the Nakayama automorphism $\mu_B$, where $P=\{p_{12}\neq0,p_{11}\}\subseteq k$, $\varsigma:A\to M_{2\times 2}(A)$ is an invertible graded algebra homomorphism and $\nu:A\to A^{\oplus2}$ is a degree one $\varsigma$-derivation. Suppose $(\{\delta_{i,r}\}, \{\delta_{i,l}\},\{\upsilon_{i,r}\},\{\upsilon_{i,l}\})$ is a quadruple for $\nu$,
then
    \begin{align*}
\hat{\omega}=&\sum_{i=0}^{d}\sum_{j=0}^{d-i}(-1)^j\left((\det\sigma)^{\otimes i}\otimes\left( \lambda^T_Y\boxtimes(\mathbb{J}\bullet\sigma^{\boxtimes j})\boxtimes\lambda_Y\right)\otimes\id^{\otimes d-i-j}\right) \left(\tau^{i}_{d+2}(\id\otimes\tau^{i+j}_{d+1})\right)(1\otimes 1\otimes\omega)\\
&+
\sum_{j=0}^{d+1}\sum_{i=1}^{d}(-1)^{i+j} \tau^{j}_{d+2}\left(\lambda^T_Y\boxtimes\left((\sigma^{\boxtimes j}\boxtimes
\mathcal{I}^{d+1-j})^T\bullet\mathbb{J}\bullet(\delta_{i,r}\boxtimes \id^{\boxtimes d-i})\right)\right) (1\otimes\omega)\\
&+\left(\sum_{i=1}^{d}\sum_{j=1}^i(-1)^{i+j}\underline{\delta_{j,r}\boxtimes\id^{\boxtimes d+1-j}}\left(\mathbb{J}\bullet\left(\delta_{i,r}\boxtimes \id^{\boxtimes d-i}\right)\right)-\sum_{i=1}^{d-1}\left(\upsilon_{i,r}\otimes \id^{\otimes d-i}\right)\right)(\omega),
\end{align*}
is a ${\mu_B}_{\mid \hat{V}}$-twisted superpotential, 
where $\mathcal{I}^{d+1-j}=\begin{pmatrix}
    \id^{\otimes d+1-j} & \\
    &\id^{\otimes d+1-j}
\end{pmatrix}$, $\sigma={\varsigma}_{\mid V}$ and $\hat{V}=V\oplus k\{y_1,y_2\}$.  Moreover,
$$
B\cong  \mathcal{D}(\hat{\omega},d).
$$

\end{theorem}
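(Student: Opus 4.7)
The plan is straightforward in outline. Since $B$ is Koszul AS-regular of dimension $d+2$ by Corollary \ref{cor: double ore extension preserve Koszul} and Proposition \ref{prop: invertible iff double ore extension}(b), its top Koszul space $\widehat{W}_{d+2}$ is one-dimensional, and by Theorem \ref{thm: properties of Koszul regular algebras}(c) any nonzero element of $\widehat{W}_{d+2}$ is automatically a ${\mu_B}_{\mid\hat{V}}$-twisted superpotential and gives rise to $B\cong\mathcal{D}(\hat{\omega},d)$. Thus the theorem reduces to two checks: (i) $\hat{\omega}\in\widehat{W}_{d+2}$, and (ii) $\hat{\omega}\neq 0$.

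For (i) I would verify that $\hat{\omega}$ lies in $\hat{V}^{\otimes s}\otimes\hat{R}\otimes\hat{V}^{\otimes d-s}$ for every $s=0,1,\ldots,d$, where $\hat{R}\subseteq\hat{V}^{\otimes 2}$ is spanned by three families of relations of $B$: the relations $R$ of $A$; the $J$-relation $r_J=Y^T\mathbb{J}Y$; and the mixed relations $y_i\otimes a-\sum_j\sigma_{ij}(a)\otimes y_j-\delta_i(a)$ for $a\in V$. The three summands of $\hat{\omega}$ are tailored exactly to these three families: the Type 1 sum supplies the $R$-piece through $\omega\in W_d$ and the $r_J$-piece through the block $\lambda_Y^T\boxtimes(\mathbb{J}\bullet\sigma^{\boxtimes j})\boxtimes\lambda_Y$; the Type 2 sum, built from $\delta_{i,r}$, supplies the mixed relations; and the Type 3 sum, built from $\upsilon_{i,r}$, absorbs the higher-order corrections forced by $\nu\neq 0$. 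For each $s$, I would regroup the three sums in a telescoping fashion so that a specific element of $\hat{R}$ appears at slot $s+1$. The cancellations are driven by Lemma \ref{lem: relation between Theta^{(i)}} (the $(\sigma^{\boxtimes i})^T\bullet\mathbb{J}\bullet\sigma^{\boxtimes i}$ identity), the construction identities of Lemma \ref{lemma: construction of delta_r and delta_l}, Lemma \ref{lem: construction of upsilon}, and Lemma \ref{lem: construction of upsilon l} (whose homotopy relations tie the $\delta$'s and $\upsilon$'s together), together with the intertwining identities of Proposition \ref{prop: relation between delta_r and delta_l}; each of these was designed to effect a single cancellation at a particular slot.

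For (ii), I would project $\hat{\omega}$ onto the graded piece $V^{\otimes d}\otimes k\{y_1,y_2\}^{\otimes 2}$ of $\hat{V}^{\otimes d+2}$. Only the $(i,j)=(0,0)$ term of the Type 1 sum contributes to this piece, and it yields a nonzero scalar multiple of $\omega\otimes r_J$. Since $\omega\ne 0$ and $r_J\ne 0$, we conclude $\hat{\omega}\ne 0$.

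The main obstacle will be the bookkeeping in (i). With three summands in $\hat{\omega}$, three relation families in $\hat{R}$, several structural identities, and the $\tau$-permutations continually reshuffling the position of $\omega$, tracking which terms cancel against which at each slot $s+1$ requires considerable care. This is precisely why the elaborate quadruple $(\{\delta_{i,r}\},\{\delta_{i,l}\},\{\upsilon_{i,r}\},\{\upsilon_{i,l}\})$ and its homotopy relations were developed in Section 2, and why the detailed verification is deferred to Section 4.
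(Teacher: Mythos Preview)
Your reduction is correct: since $B$ is Koszul AS-regular of dimension $d+2$, showing $\hat\omega\in\widehat W_{d+2}\setminus\{0\}$ suffices. However, your route differs from the paper's in a way worth noting.

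The paper does \emph{not} check $\hat\omega\in\hat V^{\otimes s}\otimes\hat R\otimes\hat V^{\otimes d-s}$ for every $s$. Instead it (1) verifies the twisted-superpotential identity
\[
\hat\omega=(-1)^{d+1}\,\tau_{d+2}^{\,d+1}\bigl(\mu_B\otimes\id^{\otimes d+1}\bigr)(\hat\omega)
\]
directly, splitting $\hat\omega$ into six pieces and computing the right-hand side for each using the explicit formula for $\mu_B$ from Theorem~\ref{thm: nakayama automorphism of double Ore extension} together with Theorem~\ref{thm: hdet is sigma d}, Lemma~\ref{lem: relation between Theta^{(i)}}, Proposition~\ref{prop: relation between delta_r and delta_l}, Lemma~\ref{lem: realtions between Gamma}, and the transfer identity Lemma~\ref{lem: transfer for the last part of twisted superpotential}; then (2) checks only the single slot $s=0$ (via Lemma~\ref{lem: prop for Gamma}); and finally (3) uses the already-established superpotential symmetry to rotate that one slot to all the others, concluding $\hat\omega\in\widehat W_{d+2}$.

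What each approach buys: the paper trades $d+1$ separate slot checks for one symmetry verification plus one slot check; the price is that it must feed in the explicit $\mu_B$. Your approach is more self-contained (Theorem~\ref{thm: nakayama automorphism of double Ore extension} is not needed), but checking the middle and right-hand slots directly will force you to convert the right-biased maps $\delta_{i,r},\upsilon_{i,r}$ into their left counterparts via Proposition~\ref{prop: relation between delta_r and delta_l} and Lemma~\ref{lem: construction of upsilon l}; the resulting telescoping is essentially the same computation the paper does once, now repeated slot by slot. One small slip: in your nonvanishing argument, the $(i,j)=(0,0)$ term of the Type~1 sum lands in $k\{y_1,y_2\}^{\otimes 2}\otimes V^{\otimes d}$, not $V^{\otimes d}\otimes k\{y_1,y_2\}^{\otimes 2}$ (the latter comes from $(i,j)=(d,0)$); either projection works.
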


One can also obtain many other representations of $\hat{\omega}$ by Proposition \ref{prop: relation between delta_r and delta_l} and Lemma \ref{lem: realtions between Gamma}. The proof of the theorem above will be also addressed in Section \ref{sec: proof} which is complicated and long. 

\begin{remark}
We give an explanation to understand the construction of $\hat{\omega}$. As mentioned earlier, the graded double Ore extension  $B=A_P[y_1,y_2;\varsigma,\nu]$ is a twisted tensor product of $A$ and $J=k\langle y_1,y_2\rangle /(y_2y_1-p_{12}y_1y_2-p_{11}y_1^2)$. Note that $J$ is also a Koszul AS-regular algebra, and so it has a twisted superpotential $y_2y_1-p_{12}y_1y_2-p_{11}y_1^2$. The prototype of $\hat{\omega}$ is just the element $\left(y_2y_1-p_{12}y_1y_2-p_{11}y_1^2\right)\otimes\omega$ where $\omega$ is a twisted superpotential for $A$, and then $\hat{\omega}$ is obtained by adding all terms by exchanging elements between $\{y_1,y_2\}$ and the elements in $\omega$ with respect to the relations in $B$ making use of $\sigma,\{\delta_{i,r}\}$ and $\{\upsilon_{i,r}\}$.
\end{remark} 

We close this section by a continuous work of Example \ref{exa: example 1}, which may help us to understand Theorem \ref{thm: nakayama automorphism of double Ore extension} and Theorem \ref{thm: twisted superpotential}.

\begin{example}
We compute the Nakayama automorphism of $B$ occurring in Example \ref{exa: example 1} first. The Nakayama automorphism $\mu_A$ of $A$ is 
$$
\mu_A
\begin{pmatrix}
    x_1\\
    x_2
\end{pmatrix}
=
\begin{pmatrix}
    p^{-1}&0\\
    0&p
\end{pmatrix}
\begin{pmatrix}
    x_1\\
    x_2
\end{pmatrix}.
$$

Clearly, $\omega=x_2\otimes x_1-px_1\otimes x_2$ is a basis of  $W_2$, and is also a ${\mu_A}_{\mid V}$-twisted. By Definition \ref{def: determinant of varsigma}, Theorem \ref{thm: hdet is sigma d}, Definition \ref{def: skew divergence} and \eqref{eq: t-inverse of sigma}, one obtains that 
$$
\begin{array}{ll}
\begin{aligned}
\det\varsigma(x_1)&=-px_1,\\
\hdet(\varsigma)&=\begin{pmatrix}
    p & 0\\
    0&-p
\end{pmatrix},
\end{aligned}&
\begin{aligned}
    \det\varsigma(x_2)&=px_2,  \\
\mathrm{div}_{\varsigma}\nu&=\begin{pmatrix}
0\\
-px_2
\end{pmatrix}+\mu_A^{\oplus}\sigma^{-T}\cdot\begin{pmatrix}
px_1\\
0
\end{pmatrix}=\begin{pmatrix}
0\\
0
\end{pmatrix}.
\end{aligned}
\end{array}
$$
By Theorem \ref{thm: nakayama automorphism of double Ore extension},
$$
\mu_B=\id.
$$
That is, $B$ is a graded Calabi-Yau algebra. By Theorem \ref{thm: twisted superpotential}, we have an $\id$-twisted superpotential (i.e., superpotential) $\hat{\omega}=\hat{\omega}_1+\hat{\omega}_2+\hat{\omega}_3$ for $B$, where 
\begin{align*}
\hat{\omega}_1=&\sum_{i=0}^2\sum_{j=0}^{2-i}(-1)^j\left((\det\sigma)^{\otimes i}\otimes \left( \lambda^T_Y\boxtimes(\mathbb{J}\bullet\sigma^{\boxtimes j})\boxtimes\lambda_Y\right)\otimes\id^{\otimes 2-i-j}\right) \left(\tau^{i}_{4}(\id\otimes\tau^{i+j}_{3})\right)(1\otimes 1\otimes\omega)\\
=&(y_2\otimes y_1-py_1\otimes y_2)\otimes (x_2\otimes x_1-px_1\otimes x_2)\\
&-y_2\otimes (x_2\otimes y_2\otimes x_2+px_1\otimes y_2\otimes x_1)-y_1\otimes(x_1\otimes y_1\otimes x_1+px_2\otimes y_1\otimes x_2)\\
&+y_1\otimes(px_1\otimes x_2-x_2\otimes x_1)\otimes y_2+y_2\otimes (px_2\otimes x_1+x_1\otimes x_2)\otimes y_1
\\
&+px_2\otimes (y_2\otimes y_1-py_1\otimes y_2)\otimes x_1-x_1\otimes (y_2\otimes y_1-py_1\otimes y_2)\otimes x_2\\
&+x_2\otimes (py_1\otimes x_2\otimes y_1+y_2\otimes x_2\otimes y_2)+x_1\otimes (y_1\otimes x_1\otimes y_1+py_2\otimes x_1\otimes y_2)\\
&+(x_2\otimes x_1-px_1\otimes x_2)\otimes (y_2\otimes y_1-py_1\otimes y_2),\\
\hat{\omega}_2=&\sum_{j=0}^{3}\sum_{i=1}^{2}(-1)^{i+j} \tau^{j}_{4}\left(\lambda^T_Y\boxtimes\left((\sigma^{\boxtimes j}\boxtimes
\mathcal{I}^{3-j})^T\bullet\mathbb{J}\bullet(\delta_{i,r}\boxtimes \id^{\boxtimes 2-i})\right)\right) (1\otimes\omega)\\
=&-py_2\otimes x_1\otimes x_2\otimes x_1-y_1\otimes  x_2\otimes x_1\otimes x_2+x_2\otimes y_1\otimes x_2\otimes x_1+px_1\otimes y_2\otimes x_1\otimes x_2\\
&-x_1\otimes x_2\otimes y_1\otimes x_2-px_2\otimes x_1\otimes y_2\otimes x_1+x_2\otimes x_1\otimes x_2\otimes y_1+px_1\otimes x_2\otimes x_1\otimes y_2,
\\
\hat{\omega}_3=&\left(\sum_{i=1}^{2}\sum_{j=1}^i(-1)^{i+j}\underline{\delta_{j,r}\boxtimes\id^{\boxtimes 3-j}}\left(\mathbb{J}\bullet\left(\delta_{i,r}\boxtimes \id_V^{\boxtimes 2-i}\right)\right)-\upsilon_{1,r}\otimes \id\right)(\omega)\\
=&x_1\otimes x_2\otimes x_2\otimes x_1+px_1\otimes x_2\otimes x_1\otimes x_2-p(x_2\otimes x_1-px_1\otimes x_2)\otimes x_2\otimes x_1\\
=&px_1\otimes x_2\otimes x_1\otimes x_2-px_2\otimes x_1\otimes x_2\otimes x_1.
\end{align*}

\end{example}

\section{Proof of Lemma \ref{lemma: Yoneda product} and Theorem \ref{thm: twisted superpotential}}\label{sec: proof}
In order to prove Lemma \ref{lemma: Yoneda product}, we show two commutative diagrams first.  

\begin{lemma}\label{lem: E^1(B) E^{d+1}(B)}
For any $j=1,\cdots,n$ and $a_0,a_1,a_2\in k$, the following diagram is commutative:
$$
\xymatrix{
 W_d\otimes B(-2)\ar[r]^(0.38){\begin{pmatrix} \partial_d \\ { -f_d}\end{pmatrix}}\ar[d]^{\varphi_{2;a_0a_1a_2}}
 &
 W_{d-1}\otimes B(-2)\bigoplus (W_d\otimes B(-1))^{\oplus2}\ar[d]^{\varphi_{1;a_0a_1a_2}} \ar[rd]^(0.6){\qquad \left(a_0\eta_j^*\otimes\varepsilon_B,\  \left(a_1\omega^*\otimes\varepsilon_B, \ a_2\omega^*\otimes\varepsilon_B\right)\right)}\\
B(-d)^{\oplus2}\bigoplus W_1\otimes B(-d-1)\ar[r]^(0.63){(-1)^{d+1}\left( g_0,\   \partial_1\right)}
&
B(-d-1)\ar[r]^{\varepsilon_B}&
k_B(-d-1),
}
$$
where 
\begin{align*}
\varphi_{1;a_0a_1a_2}&=\left(a_0\eta_j^*\otimes\id_B,\ \left(a_1\omega^*\otimes\id_B,a_2\omega^*\otimes\id_B\right)\right),\\
\varphi_{2;a_0a_1a_2}&=(-1)^{d+1}a_0\begin{pmatrix}
0\\
(\eta_j^*\otimes \id_V)\otimes \id_B
\end{pmatrix}+
(-1)^d
\begin{pmatrix}
(\mathbb{J}\hdet\varsigma)^T\bullet\begin{pmatrix}
a_1\omega^*\otimes \id_B\\a_2\omega^*\otimes \id_B
\end{pmatrix}\\
\left((\omega^*\otimes\id_V)\left((a_1,a_2)\bullet
\mathbb{J}\bullet\delta_{d,r}\right)\right)\otimes \id_B
\end{pmatrix}.
\end{align*}
\end{lemma}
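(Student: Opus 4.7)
The plan is to verify the diagram's commutativity in two stages: first the triangle on the right, then the square on the left. The triangle $\varepsilon_B\circ\varphi_{1;a_0a_1a_2}=(a_0\eta_j^*\otimes\varepsilon_B,\,(a_1\omega^*\otimes\varepsilon_B,\,a_2\omega^*\otimes\varepsilon_B))$ is immediate, since each component of $\varphi_{1;a_0a_1a_2}$ has the form $\phi\otimes\id_B$ and post-composing with $\varepsilon_B$ just replaces $\id_B$ with $\varepsilon_B$. Verifying the square reduces to proving
\begin{equation*}
(-1)^{d+1}(g_0,\ \partial_1)\circ\varphi_{2;a_0a_1a_2}\;=\;\varphi_{1;a_0a_1a_2}\circ\begin{pmatrix}\partial_d\\ -f_d\end{pmatrix}.
\end{equation*}
By linearity in $(a_0,a_1,a_2)$, I would split this identity into the $a_0$-part and the $(a_1,a_2)$-part, and treat each one separately.

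For the $a_0$-part, both sides will reduce, via $\partial_1=\id_V\otimes m_B$, $\partial_d=\id_V^{\otimes d-1}\otimes m_B$, and the inclusion $W_d\subseteq W_{d-1}\otimes V$, to a common expression $a_0\sum_l\eta_j^*(w'_l)\,v_l\,b$ on an input $w\otimes b$ with $w=\sum_l w'_l\otimes v_l$; the double sign $(-1)^{d+1}\cdot(-1)^{d+1}$ cancels and the resulting computation is routine. For the $(a_1,a_2)$-part, the plan is to expand $f_d=\mathbb{J}\bullet\bigl(\sigma^{\boxtimes d}\boxtimes\lambda_Y+(\id^{\otimes d}\otimes m_B)^{\oplus 2}(\delta_{d,r}\boxtimes\id_B)\bigr)$ from Lemma \ref{lemma: construction of delta_r and delta_l}, apply the functional $(a_1\omega^*\otimes\id_B,\,a_2\omega^*\otimes\id_B)$ entry-by-entry, and invoke the crucial identity $\sigma^{\boxtimes d}(\omega)=(\hdet\varsigma)\omega$ from Theorem \ref{thm: hdet is sigma d}, which implies $\omega^*\circ\sigma^{\boxtimes d}_{ij}=(\hdet\varsigma)_{ij}\,\omega^*$ on the one-dimensional $W_d$. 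Substituting this identity will reduce the right side, on input $w\otimes b$, to
\begin{equation*}
-\omega^*(w)\bigl[(a_1,a_2)\,\mathbb{J}\,\hdet\varsigma\,(y_1,y_2)^T\bigr]b\;-\;(\omega^*\otimes\id_V)\bigl((a_1,a_2)\,\mathbb{J}\,\delta_{d,r}(w)\bigr)\cdot b,
\end{equation*}
which, via $g_0=\underline{\lambda_Y}$ (noting $\delta_{0,r}=0$) and $\partial_1=\id_V\otimes m_B$, is seen to match a direct computation of the left side.

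The hard part will not be conceptual but notational: one must carefully unpack the $\boxtimes$ and $\bullet$ operations, track the direct-sum/tensor decompositions, and reconcile the signs $(-1)^d$ appearing inside $\varphi_{2;a_0a_1a_2}$ with the external factor $(-1)^{d+1}$ in front of $(g_0,\partial_1)$. No homological input beyond Theorem \ref{thm: hdet is sigma d} is required; the explicit formula for $\varphi_{2;a_0a_1a_2}$ in the statement has been engineered precisely so that this verification closes.
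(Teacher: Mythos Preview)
Your proposal is correct and follows essentially the same approach as the paper: verify the triangle immediately, then use the explicit formula for $f_d$ from Lemma~\ref{lemma: construction of delta_r and delta_l} together with Theorem~\ref{thm: hdet is sigma d} to reduce the square to a direct match with $(g_0,\partial_1)\circ\varphi_{2;a_0a_1a_2}$. The only cosmetic difference is that the paper handles the $a_0$- and $(a_1,a_2)$-contributions in a single chain of equalities rather than splitting them.
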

\begin{proof}
Obviously,  $\varepsilon_B\varphi_{1;a_0a_1a_2}=\left(a_0\eta_j^*\otimes\varepsilon_B,\ \left(a_1\omega^*\otimes\varepsilon_B,a_2\omega^*\otimes\varepsilon_B\right)\right)$ and
\begin{align*}
f_d
=\mathbb{J}\bullet\left(\sigma^{\boxtimes d}\boxtimes \lambda_Y
+({\id}_V^{\otimes d}\otimes m_B)^{\oplus2}(\delta_{d,r}\boxtimes \id_B)
\right)=(\mathbb{J}\hdet \varsigma)\boxtimes \lambda_Y+  (\id^{\otimes d}_V\otimes m_B)^{\oplus 2}((\mathbb{J}\bullet\delta_{d,r})\boxtimes \id_B),
\end{align*} 
by Theorem \ref{thm: hdet is sigma d}. Then
\begin{align*}
\varphi_{1;a_0a_1a_2}\begin{pmatrix}
    \partial_d\\-f_d
\end{pmatrix}=&a_0\eta_j^*\otimes m_B-\left((a_1\omega^*,a_2\omega^*)\bullet(\mathbb{J}\hdet \varsigma)\right) \boxtimes \lambda_Y-(a_1\omega^*\otimes m_B,a_2\omega^*\otimes m_B)\bullet((\mathbb{J}\bullet\delta_{d,r})\boxtimes \id_B)\\
=&a_0\eta_j^*\otimes m_B-\left((\mathbb{J}\hdet \varsigma)^T\bullet\begin{pmatrix}a_1\omega^*\\a_2\omega^*\end{pmatrix}\right)^T \boxtimes \lambda_Y-m_B\left((\omega^*\otimes\id_V)\left((a_1,a_2)\bullet
\mathbb{J}\bullet\delta_{d,r}\right)\otimes \id_B\right)\\
=&a_0\eta_j^*\otimes m_B-\underline{\lambda_Y}\left(
(\mathbb{J}\hdet\varsigma)^T\bullet\begin{pmatrix}
a_1\omega^*\otimes \id_B\\a_2\omega^*\otimes \id_B
\end{pmatrix}\right)
-m_B\left((\omega^*\otimes\id_V)\left((a_1,a_2)\bullet
\mathbb{J}\bullet\delta_{d,r}\right)\otimes \id_B\right)\\
 =&(-1)^{d+1}(g_0,\ \partial_1)\varphi_{2;a_0a_1a_2}.
\end{align*}
The proof is completed.
\end{proof}

\begin{lemma}\label{lem: E{d+1}(B) ast E1(B)}
For any $a_0,a_1,a_2\in k$ and  $i=1,\cdots,n$, the following diagram is commutative.
{\footnotesize
$$
\xymatrix{
W_d\otimes B(-2) \ar[rr]^(0.4){ 
\left(
{\begin{array}{cc} 
\partial_d \\ -f_d
\end{array}
}
\right)}
\ar[d]^{\psi_{d+1;a_0a_1a_2}}
&& W_{d-1}\otimes B(-2)\bigoplus (W_d\otimes B(-1))^{\oplus2}\ar[rr]^(0.65){
   \begin{pmatrix}
       \partial_{d-1}&0\\
       -f_{d-1}&-\partial_d^{\oplus2}\\
       h_{d-1}&g_d
   \end{pmatrix}
}\ar[d]^{\psi_{d;a_0a_1a_2}}
&&\cdots
\\
W_{d-1}\otimes B(-3)\bigoplus (W_d\otimes B(-2))^{\oplus2}\ar[rr]^(0.43){
    \begin{pmatrix}
        -\partial_{d-1}&0\\
        f_{d-1}&\partial_d^{\oplus2}\\
        -h_{d-1}&-g_d
    \end{pmatrix}
    }
&&
W_{d-2}\otimes B(-3)\bigoplus (W_{d-1}\otimes B(-2))^{\oplus2}\bigoplus W_d\otimes B(-1)
 \ar[rr]^(0.85){
    \begin{pmatrix}
         -\partial_{d-2}&0&0\\
     f_{d-2}&\partial_{d-1}^{\oplus2}&0\\
         -h_{d-2}&-g_{d-1}&-\partial_d
    \end{pmatrix}}
&&\cdots
}
$$
$$
\qquad\qquad\qquad\xymatrix{
\cdots\ar[rr]^(0.3){
    \begin{pmatrix}
        \partial_1&0&0\\
        -f_1&-\partial_2^{\oplus2}&0\\
        h_1&g_2&\partial_3
    \end{pmatrix}
}
&&B(-2)\bigoplus(W_1\otimes B(-1))^{\oplus2}\bigoplus W_{2}\otimes B\ar[d]^{\psi_{1;a_0a_1a_2}}\ar[rr]^(0.6){
\begin{pmatrix}
-f_0&-\partial_1^{\oplus2}&0\\0&g_1&\partial_2
\end{pmatrix}
}
&&B(-1)^{\oplus2}\bigoplus W_1\otimes B\ar[d]^{\psi_{0;a_0a_1a_2}}\ar[rd]^{\ \ \ ((a_1\varepsilon_B,\ a_2\varepsilon_B),\ a_0x_i^*\otimes \varepsilon_B)}\\
\cdots\ar[rr]^(0.3){
    \begin{pmatrix}
        f_0&\partial_1^{\oplus2}&0\\
        0&-g_1&-\partial_2
    \end{pmatrix}}
&&B(-2)^{\oplus 2}\bigoplus W_1\otimes B(-1)\ar[rr]^(0.6){
    \begin{pmatrix}
        -g_0&-\partial_1
    \end{pmatrix}
}
&&B(-1)\ar[r]^{\varepsilon_{B}}
&
k_B(-1),
}
$$
}
where
{\scriptsize
\begin{align*}
\psi_{0;a_0a_1a_2}=&((a_1,\ a_2)\cdot-,\ 0)+(0,\ a_0x_i^*\otimes\id_B),\\
\psi_{1;a_0a_1a_2}=&\begin{pmatrix}
        \mathbb{J}^T\begin{pmatrix}a_1\\a_2\end{pmatrix}&0&0\\
        0&(a_1,a_2)\cdot-
        & 0
        \end{pmatrix}
        +a_0\begin{pmatrix}
        0&-\Lambda_1\cdot-&0\\
        0&
        -(x_i^*\otimes\id_V\otimes \id_B)\underline{\delta_{1,l}\boxtimes\id_B}
        & -(x_i^*\otimes \id_V)\otimes\id_B
        \end{pmatrix},\\
    \psi_{s;a_0a_1a_2}=&
    \begin{pmatrix}
    0&0&0\\
        \mathbb{J}^T\begin{pmatrix}a_1\\a_2\end{pmatrix} &0 &0\\
        0 & (a_1,a_2)\cdot-&0
    \end{pmatrix}+a_0{ \begin{pmatrix}
        (-1)^s(x_i^*\det\sigma\otimes\id_V^{\otimes s-2})\otimes\id_B&0&0\\
       \Lambda_{s}\bullet \mathbb{J}\bullet(\delta_{s-1,l}\boxtimes\id_B) &(-1)^s \Lambda_{s}\cdot- &0\\
        -((x_i^*\otimes\id_V^{\otimes s})\upsilon_{s-1,l})\otimes\id_B&-(x_i^*\otimes\id^{\otimes s}_V\otimes\id_B)\underline{\delta_{s,l}\boxtimes\id_B}&
      (-1)^s  (x_i^*\otimes\id_V^{\otimes s})\otimes\id_B
    \end{pmatrix}
    },
    \\
    \psi_{d;a_0a_1a_2}=&
    \begin{pmatrix}
    0&0\\
        \mathbb{J}^T\begin{pmatrix}a_1\\a_2\end{pmatrix}  &0 \\
        0 & (a_1,a_2)\cdot-
    \end{pmatrix}+a_0{ \begin{pmatrix}
        (-1)^d(x_i^*\det\sigma\otimes\id_V^{\otimes d-2})\otimes\id_B&0\\
        \Lambda_{d}\bullet\mathbb{J}\bullet(\delta_{d-1,l}\boxtimes\id_B)&(-1)^d \Lambda_{d}\cdot- \\
        -((x_i^*\otimes\id_V^{\otimes d})\upsilon_{d-1,l})\otimes\id_B&-(x_i^*\otimes\id^{\otimes d}_V\otimes\id_B)\underline{\delta_{d,l}\boxtimes\id_B}
    \end{pmatrix},
    }
    \\
        \psi_{d+1;a_0a_1a_2}=&
    \begin{pmatrix}
    0\\
        \mathbb{J}^T\begin{pmatrix}a_1\\a_2\end{pmatrix} 
    \end{pmatrix}+a_0\begin{pmatrix}
        (-1)^{d+1}(x_i^*\det\sigma\otimes\id_V^{\otimes d-1})\otimes\id_B\\
        \Lambda_{d+1}\bullet \mathbb{J}\bullet(\delta_{d,l}\boxtimes\id_B)
    \end{pmatrix},\\
    \Lambda_j=&\begin{pmatrix}
            x_i^*\sigma_{11}\otimes \id_V^{\otimes j-1}\otimes\id_B &x_i^*\sigma_{21}\otimes \id_V^{\otimes j-1}\otimes\id_B\\
            x_i^*\sigma_{12}\otimes \id_V^{\otimes j-1}\otimes\id_B &x_i^*\sigma_{22}\otimes \id_V^{\otimes j-1}\otimes\id_B
        \end{pmatrix}:V^{\otimes j}\otimes B\to M_{2\times 2}\left(V^{\otimes j-1}\otimes B(-1)\right),
\end{align*}
}
for all $s=2,\cdots,d-1$ and  $j=1,\cdots,d+1$.
\end{lemma}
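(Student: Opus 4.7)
The plan is to verify that $\psi_\centerdot = \{\psi_{s;a_0a_1a_2}\}$ is a chain map lifting the cocycle representing $a_0 x_i^* + a_1 \xi_1 + a_2 \xi_2 \in E^1(B)$ through the minimal free resolution $F_\centerdot$ of $k_B$ (the source being $F_\centerdot$, the target being $F_\centerdot$ shifted homologically by one and in grading by $-1$). By the Comparison Theorem such a lift exists and is unique up to homotopy, so the content of the lemma is that the specific formulas displayed do yield a chain map, i.e.\ $D'_s\,\psi_s = \psi_{s-1}\,D_s$ for every $s$, where $D_s, D'_s$ denote the differentials in the top and bottom rows. In every square the two sides split naturally according to the coefficients $(a_1,a_2)$ and $a_0$, so I would verify the ``$\mathbb{J}$-part'' (the piece with $(a_1,a_2)^T$) and the ``$x_i^*$-part'' (the piece with $a_0$) separately.

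My approach is to proceed square-by-square from right to left. The rightmost identity $\varepsilon_B \psi_0 = \bigl((a_1\varepsilon_B,\,a_2\varepsilon_B),\,a_0 x_i^*\otimes\varepsilon_B\bigr)$ is immediate from the definition of $\psi_0$. For each subsequent square, the $\mathbb{J}$-part reduces, after unwinding the definitions of $f_i, g_i, h_i$ from Lemma \ref{lemma: construction of delta_r and delta_l} and Lemma \ref{lem: construction of upsilon}, to identities of the form $\mathbb{J}^T \bullet \mathbb{J}\bullet\sigma = (\det\sigma)\,\mathbb{J}$ and to the vanishing statements in Lemma \ref{lem: relation between Theta^{(i)}}; these are exactly encoded by the relations \eqref{eq: Theta}, \eqref{eq: Gamma}, \eqref{eq: Nu}. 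The $x_i^*$-part is where $\delta_{s,l}, \upsilon_{s,l}$ and the map $\Lambda_s$ enter. Here the crucial inputs are Lemma \ref{lemma: construction of delta_r and delta_l}(b), which matches $\delta_{s,r}+(-1)^s\delta_{s,l}$ to $\sigma\boxtimes\delta_{s-1,r}+(-1)^s\delta_{s-1,l}\boxtimes\id_V$, and Lemma \ref{lem: construction of upsilon l}, which expresses $\upsilon_{s,l}+\upsilon_{s-1,l}\boxtimes\id_V$ in terms of $\upsilon_{s,r}$ up to explicit correction terms built from $\delta_{s,r}, \delta_{s,l}$. These identities were designed precisely so that the corresponding entries of $D'_s\,\psi_s - \psi_{s-1}\,D_s$ cancel.

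The main obstacle will be the bookkeeping in the mixed entries of the matrix equation $D'_s\,\psi_s = \psi_{s-1}\,D_s$, particularly the entry where $h_{s-1}$ pairs with $\Lambda_s$ and with $\upsilon_{s-1,l}$. In that entry, $h_{s-1}$ contributes via its $\Gamma_{s-1,l}$-representation (from Lemma \ref{lem: construction of upsilon}) together with a $\upsilon_{s-1,r}$ piece, while $\Lambda_s$ contracted against $\delta_{s-1,l}$ produces a $\sigma\boxtimes\delta_{s-1,l}$ piece; cancelling these against the contributions $\upsilon_{s-1,l}\otimes\id_V$ and $\upsilon_{s,l}$ coming from $\psi_s$ requires Lemma \ref{lem: construction of upsilon l} applied in precisely the stated form. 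To commute $\Lambda_s$ past $\boxtimes$-products one uses the diagonal-type identity of Remark \ref{rem: tensor is not unusual}(2). The topmost square involving $\psi_{d+1}$ is then a degenerate case: since $W_{d+1}=0$, the potential terms $\delta_{d+1,l}$, $\upsilon_{d,l}\otimes\id_V$, and $h_d$ either do not appear or vanish after restriction, and the remaining verification reduces to Lemma \ref{lem: relation between Theta^{(i)}} together with the final identity already appearing in the construction of $\delta_{d,l}$. Once these three pieces (the trivial rightmost square, the inductive middle squares, and the degenerate top square) are in place, the commutativity of the full diagram follows.
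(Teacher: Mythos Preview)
Your proposal is correct and follows essentially the same approach as the paper: split $\psi_{s;a_0a_1a_2}$ by linearity into the $(a_1,a_2)$-piece and the $a_0$-piece, and verify the chain-map condition square by square using Lemma~\ref{lemma: construction of delta_r and delta_l}(b), Lemma~\ref{lem: construction of upsilon l}, and Lemma~\ref{lem: relation between Theta^{(i)}}. One small remark: the $(a_1,a_2)$-piece is actually simpler than you indicate---the paper dispatches it in one line via the tautological identity $g_{s-1}\bigl(\mathbb{J}^T\begin{pmatrix}a_1\\a_2\end{pmatrix}\bigr)=(a_1,a_2)\bullet f_{s-1}$, which follows directly from comparing the definitions of $f_{s-1}$ and $g_{s-1}$ without invoking any $\det\sigma$ identity; all the real work (your identities (iii)--(v) in spirit) is in the $a_0$-piece, exactly as you anticipated.
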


\begin{proof}
Obviously, $\varepsilon_B\psi_{0;a_0a_1a_2}=((a_1\varepsilon_B,\ a_2\varepsilon_B),\ a_0x_i^*\otimes \varepsilon_B)$. We Write $\partial_{\centerdot}^F$ for the differential of the free resolution  $F_\centerdot$. So the differential of top row is $\partial_{\centerdot}^F$ and the bottom one is $\partial_{\centerdot}^{F[-1]}=-\partial_{\centerdot-1}^F$.

It is not hard to check that
$\psi_{s-1;0a_1a_2}\partial^F_{s+1}=-\partial^F_{s}\psi_{s;0a_1a_2}$ since $g_{s-1}\left(
\mathbb{J}^T
\begin{pmatrix}
    a_1\\a_2
\end{pmatrix}
\right)=(a_1,a_2)\bullet f_{s-1}
$ for any $a_1,a_2\in k$ and $s=1,\cdots,d+1$.

To prove 
$\psi_{s-1;a_000}\partial^F_{s+1}=-\partial^F_{s}\psi_{s;a_000}$ for any $a_0\in k$ and $s=1,\cdots,d+1$, we need some preparation. 
By Lemma \ref{lem: relation between Theta^{(i)}}, we have the following equalities immediately
\begin{align}
    g_{s-1}(\Lambda_s\cdot-)&=(x_i^*\otimes \id_V^{\otimes s-1}\otimes \id_B)\underline{\sigma^{\boxtimes s}\boxtimes\lambda_Y}
+
(x_i^*\otimes\id_V^{\otimes s-1}\otimes m_B)\underline{\sigma\boxtimes \delta_{s-1,r}\boxtimes\id_B},\label{eq: g Lambda}\\
\partial_{s-1}^{\oplus 2} (\Lambda_s\cdot-) &=(\Lambda_{s-1}\cdot-)\partial_{s}^{\oplus 2},\label{eq: partial Lambda}
\\
\Lambda_{s}\bullet\mathbb{J}\bullet\left(\sigma\boxtimes \alpha\right)
&=(\mathbb{J}\bullet \alpha)(x_i^*\det \sigma\otimes \id^{\otimes s-1}_V\otimes\id_B)\label{eq: Lambda J sigma},
\end{align}
for some  $\alpha:V^{\otimes s-1 }\otimes B\to (V^{\otimes s-1}\otimes B)^{\oplus 2}$ and $s=2,\cdots,d+1$. Then we have the following equalities
\begin{enumerate}[label=(\roman*)]
\item For any $s=3,\cdots,d+1$,
$$
((x^*_i\det\sigma\otimes\id^{\otimes s-3}_V)\otimes\id_B)\partial_{s-1}=x^*_i\det\sigma\otimes\id^{\otimes s-3}_V\otimes m_B=\partial_{s-2}((x^*_i\det\sigma\otimes\id^{\otimes s-2}_V)\otimes\id_B).
$$

\item For any $s=1,\cdots,d-1$,
$$
((x^*_i\otimes\id^{\otimes s-1}_V)\otimes\id_B)\partial_{s+1}=x^*_i\otimes\id^{\otimes s-1}_V\otimes m_B=\partial_{s}((x^*_i\otimes\id^{\otimes s}_V)\otimes\id_B).
$$

\item For any $s=2,\cdots,d+1$,
\begin{align*}
    &(\Lambda_{s-1}\bullet \mathbb{J}\bullet (\delta_{s-2,l}\boxtimes \id_B))\partial_{s-1}+(-1)^s\Lambda_{s-1}\bullet f_{s-1}\\
    =&\Lambda_{s-1}\bullet\left((\id^{\otimes s-1}_V\otimes m_B)^{\oplus 2}(\mathbb{J}\bullet (\delta_{s-2,l}\boxtimes\id_V\boxtimes \id_B))\right)+(-1)^{s}\Lambda_{s-1}\bullet\left((\id^{\otimes s-2}_V\otimes m_B)^{\oplus 2}\left(\mathbb{J}\bullet(
\delta_{s-1,r}\boxtimes \id_B)\right)\right)\\
&+(-1)^{s}\Lambda_{s-1}\bullet\left(\mathbb{J}\bullet(\sigma\boxtimes\sigma^{\boxtimes s-2}\boxtimes\lambda_Y)\right)\\
    =&\Lambda_{s-1}\bullet\left((\id^{\otimes s-1}_V\otimes m_B)^{\oplus 2}(\mathbb{J}\bullet (\delta_{s-2,l}\boxtimes\id_V\boxtimes \id_B))\right)+(-1)^{s}\Lambda_{s-1}\bullet\left((\id^{\otimes s-2}_V\otimes m_B)^{\oplus 2}\left(\mathbb{J}\bullet(
\delta_{s-1,r}\boxtimes \id_B)\right)\right)\\
&+(-1)^{s}\left(
\mathbb{J}\bullet (\sigma^{\boxtimes s-2}\boxtimes\lambda_Y)\right)
(x_i^*\det\sigma\otimes\id_V^{\otimes s-2}\otimes \id_B)\\
=&\Lambda_{s-1}\bullet\left((\id^{\otimes s-1}_V\otimes m_B)^{\oplus 2}(\mathbb{J}\bullet (\delta_{s-1,l}\boxtimes \id_B))\right)+(-1)^{s}\Lambda_{s-1}\bullet\left((\id^{\otimes s-2}_V\otimes m_B)^{\oplus 2}\left(\mathbb{J}\bullet(\sigma\boxtimes
\delta_{s-2,r}\boxtimes \id_B)\right)\right)
\\
&+(-1)^{s}\left(
\mathbb{J}\bullet (\sigma^{\boxtimes s-2}\boxtimes\lambda_Y)\right)
(x_i^*\det\sigma\otimes\id_V^{\otimes s-2}\otimes \id_B)\\
=&(\id^{\otimes s-2}_V\otimes m_B)^{\oplus 2}\left(\Lambda_{s-1}\bullet\mathbb{J}\bullet (\delta_{s-1,l}\boxtimes \id_B))\right)+(-1)^{s}\left(
\mathbb{J}\bullet (\sigma^{\boxtimes s-2}\boxtimes\lambda_Y)\right)
(x_i^*\det\sigma\otimes\id_V^{\otimes s-2}\otimes \id_B)
\\
&+(-1)^{s}(\id^{\otimes s-2}_V\otimes m_B)^{\oplus 2}\left(\mathbb{J}\bullet(
\delta_{s-2,r}\boxtimes \id_B)\right)(x_i^*\det\sigma\otimes\id_V^{\otimes s-2}\otimes \id_B)\\
=&(-1)^sf_{s-2}(
 (x_i^*\det\sigma\otimes\id_V^{\otimes s-2})\otimes \id_B
 )+\partial_{s-1}^{\oplus 2}\left(\Lambda_{s-1}\bullet
\mathbb{J}\bullet(\delta_{s-1,l}\boxtimes\id_B)\right),
\end{align*}
where the second and fourth equalities hold by \eqref{eq: Lambda J sigma}, and the third equality holds by Lemma \ref{lemma: construction of delta_r and delta_l}.

\item  For any $s=1,\cdots, d$,
\begin{align*}
    &(-1)^{s+1}g_{s-1}(\Lambda_{s}\cdot-)+\partial_s
    \left(x_i^*\otimes\id_V^{\otimes s}\otimes\id_B\right)\underline{\delta_{s,l}\boxtimes\id_B}\\
    =&(-1)^{s+1}(x_i^*\otimes\id_V^{\otimes s-1}\otimes\id_B)\underline{\sigma^{\boxtimes s}\boxtimes\lambda_Y}+(-1)^{s+1}
    (x_i^*\otimes\id_V^{\otimes s-1}\otimes m_B)\underline{\sigma\boxtimes\delta_{s-1,r}\boxtimes\id_B}\\
     &+(x_i^*\otimes\id_V^{\otimes s-1}\otimes m_B)\underline{\delta_{s,l}\boxtimes\id_B}\\
     =&(-1)^{s+1}(x_i^*\otimes\id_V^{\otimes s-1}\otimes\id_B)\underline{\sigma^{\boxtimes s}\boxtimes\lambda_Y}+(x_i^*\otimes\id_V^{\otimes s-1}\otimes m_B)\underline{\left(\delta_{s-1,l}\boxtimes\id_B+(-1)^{s+1}\delta_{s,r}\right)\boxtimes\id_B}\\
     =&(-1)^{s-1}\left((x_i^*\otimes\id_V^{\otimes s-1})\otimes\id_B)\right)g_s+\left(
(x_i^*\otimes\id_V^{\otimes s-1}\otimes\id_B)\underline{\delta_{s-1,l}\boxtimes\id_B}
     \right)\partial_s^{\oplus 2},
\end{align*}
where the first equality holds by \eqref{eq: g Lambda}, and the second equality holds by Lemma \ref{lemma: construction of delta_r and delta_l}.

\item For any $s=2,\cdots,d+1$,
\begin{align*}
&h_{s-2}\left(
(x_i^*\det\sigma\otimes\id_V^{\otimes s-2})\otimes\id_B
    \right)-g_{s-1}\left(\Lambda_{s}\bullet\mathbb{J}\bullet (\delta_{s-1,l}\boxtimes\id_B)\right)+\partial_s\left(\left(
    (x_i^*\otimes\id_V^{\otimes s})\upsilon_{s-1,l}\right)\otimes\id_B
    \right)\\
=&(x_i^*\otimes\id_V^{\otimes s-1}\otimes\id_B)\left((-1)^{s+1}\sum_{u=1}^{s-2}(-1)^{u+1}(\det\sigma)^{\otimes s-u-1}\boxtimes \Gamma_{u,l}\boxtimes\lambda_Y
-\left(
(\sigma^{\boxtimes s})^T\bullet \mathbb{J}\bullet \delta_{s-1,l}
\right)^T\boxtimes \lambda_Y
\right)\\
&+(x_i^*\otimes\id_V^{\otimes s-1}\otimes m_B)
\left(
\left(
\upsilon_{s-1,l}+(-1)^{s+1}\det\sigma\otimes\upsilon_{s-2,r}-\underline{\sigma\boxtimes\delta_{s-1,r}}(\mathbb{J}\bullet\delta_{s-1,l})
\right)\otimes\id_B
\right)\\
=&(x_i^*\otimes\id_V^{\otimes s-1}\otimes\id_B)\left((-1)^{s+1}\sum_{u=1}^{s-1}(-1)^{u+1}(\det\sigma)^{\otimes s-u-1}\boxtimes \Gamma_{u,l}\boxtimes\lambda_Y
+
\left(\delta_{s-1,l}^T\bullet\mathbb{J}\bullet\sigma^{\boxtimes s-1}\right)\boxtimes\lambda_Y
\right)\\
&+(x_i^*\otimes\id_V^{\otimes s-1}\otimes m_B)
\left(
\left(
(-1)^{s-1}\upsilon_{s-1,r}-\upsilon_{s-2,l}\otimes\id_V+\underline{\delta_{s-1,l}\boxtimes\id_V}(\mathbb{J}\bullet\delta_{s-1,r})
\right)\otimes\id_B
\right)\\
=&(-1)^{s-1}\left((x_i^*\otimes\id_V^{\otimes s-1})\otimes\id_B\right)h_{s-1}
+\left(
(x_i^*\otimes\id_V^{\otimes s-1}\otimes\id_B)\underline{\delta_{s-1,l}\boxtimes \id_B}
\right)f_{s-1}\\
&-
\left(
((x_i^*\otimes\id_V^{\otimes s-1})\upsilon_{s-2,l})\otimes \id_B
\right)\partial_{s-1},
\end{align*}
where the first equality is deduced by \eqref{eq: g Lambda}, the second equality holds by the definition of $\Gamma_{s-1,l}$ and Lemma \ref{lem: construction of upsilon l}.
\end{enumerate}

Then $\psi_{s-1;a_000}\partial^F_{s+1}=-\partial^F_{s}\psi_{s;a_000}$ for any $=1,\cdots,d+1$ by  \eqref{eq: partial Lambda} and (i)--(v).
\end{proof}

\begin{proof}[Proof of Lemma \ref{lemma: Yoneda product}]
By Lemma \ref{lem: E^1(B) E^{d+1}(B)}, on obtains that
\begin{align*}
\begin{pmatrix}
    \xi^*_1\ast \omega^*_1 &\xi^*_2\ast \omega^*_1   &x^*_i\ast\omega^*_1\\
    \xi^*_1\ast \omega^*_2&\xi^*_2\ast \omega^*_2& x_i^*\ast\omega^*_2\\
    \xi^*_1\ast \eta^*_j  & \xi^*_2\ast \eta^*_j&   x_i^*\ast \eta^*_j
\end{pmatrix}&=
\begin{pmatrix}
    ((\varepsilon_B,0),0) \varphi_{2;010} &((0,\varepsilon_B),0) \varphi_{2;010}&\left((0,0),x_i^*\otimes \varepsilon_B\right)\varphi_{2;010}\\
   ((\varepsilon_B,0),0) \varphi_{2;001}  &((0,\varepsilon_B),0) \varphi_{2;001}&\left((0,0),x_i^*\otimes \varepsilon_B\right)\varphi_{2;001}\\
   ((\varepsilon_B,0),0)\varphi_{2;100} &((0,\varepsilon_B),0)\varphi_{2;100} &\left((0,0),x_i^*\otimes \varepsilon_B\right)\varphi_{2;100}
\end{pmatrix}\\
&=(-1)^d
\begin{pmatrix}(\mathbb{J}\hdet\varsigma)(\omega^*\otimes\varepsilon_B)&  \left(\mathbb{J}\left((x_i^*)^{\oplus 2}\delta_r\right)\right)(\omega^*\otimes \varepsilon_B)\\
   0& -(\eta_j^*\otimes x_i^*)(\omega)(\omega^*\otimes\varepsilon_B)
\end{pmatrix}\\
&=(-1)^d
\begin{pmatrix}\mathbb{J}\hdet\varsigma&  \mathbb{J}\left((x_i^*)^{\oplus 2}\delta_r\right)\\
 0&   -(\eta_j^*\otimes x_i^*)(\omega)
\end{pmatrix}\widetilde{\omega}^*.
\end{align*}

By Lemma \ref{lem: E{d+1}(B) ast E1(B)}, we have
\begin{align*}
&\begin{pmatrix}
\omega_1^*\ast\xi_1 & \omega_1^*\ast\xi_2& \omega_1^*\ast x^*_i\\
\omega_2^*\ast\xi_1 & \omega_2^*\ast\xi_2 &\omega_2^*\ast x^*_i \\
\eta_j^*\ast\xi_1&\eta_j^*\ast \xi_2&\eta_j^*\ast x^*_i    
\end{pmatrix}\\
=&
\begin{pmatrix}
    (0,(\omega^*\otimes\varepsilon_B,0)) \psi_{d+1;010} &(0,(\omega^*\otimes\varepsilon_B,0)) \psi_{d+1;001}&(0,(\omega^*\otimes\varepsilon_B,0))\psi_{d+1;100}\\
      (0,(0,\omega^*\otimes\varepsilon_B)) \psi_{d+1;010} &(0,(0,\omega^*\otimes\varepsilon_B)) \psi_{d+1;001}&(0,(0,\omega^*\otimes\varepsilon_B))\psi_{d+1;100}\\
   (\eta_j^*\otimes\varepsilon_B,(0,0))\psi_{d+1;010} & (\eta_j^*\otimes\varepsilon_B,(0,0))\psi_{d+1;001} & (\eta_j^*\otimes\varepsilon_B,(0,0))\psi_{d+1;100} 
\end{pmatrix}\\
=&\begin{pmatrix}
\mathbb{J}^T(\omega^*\otimes\varepsilon_B)& 
(x_i^*)^{\oplus2}\left(\sigma^T\cdot(\mathbb{J}\delta_l)\right)(\omega^*\otimes\varepsilon_B)\\
0&(-1)^{d+1}(x^*_i\det\sigma\otimes\eta_j^*)(\omega)(\omega^*\otimes\varepsilon_B)
\end{pmatrix}\\
=&\begin{pmatrix}
\mathbb{J}^T& 
(x_i^*)^{\oplus2}\left(\sigma^T\cdot(\mathbb{J}\delta_l)\right)\\
0&(-1)^{d+1}(x^*_i\det\sigma\otimes\eta_j^*)(\omega)
\end{pmatrix}\widetilde{\omega}^*.\qedhere
\end{align*}
\end{proof}

It turns to prove Theorem \ref{thm: twisted superpotential}. There is a useful equality in the proof of Theorem \ref{thm: twisted superpotential}. 

\begin{lemma}\label{lem: transfer for the last part of twisted superpotential}
\begin{align*}
    &\sum_{i=1}^d(-1)^i\left(\underline{\sigma\boxtimes \delta_{i,r}\boxtimes\id^{\boxtimes d-i}_V} \left(\mathbb{J}\bullet (\delta_{i,l}\boxtimes\id^{\boxtimes d-i}_V)\right)+\underline{\delta_{i,l}\boxtimes\id^{\boxtimes d-i+1}_V}\left(\mathbb{J}\bullet(\delta_{{i},r}\boxtimes\id^{\boxtimes d-i}_V)\right)\right)\\
&+\sum_{i=1}^{d}\sum_{j=1}^i(-1)^{i+j}\underline{\delta_{j,r}\boxtimes\id^{\boxtimes d+1-j}}\left(\mathbb{J}\bullet\left(\delta_{i,r}\boxtimes \id^{\boxtimes d-i}\right)\right)\\
=& \sum_{j=1}^{d}(-1)^{j}\underline{\sigma\boxtimes\delta_{j,r}\boxtimes\id^{\boxtimes d-j}}\left(\mathbb{J}\bullet\delta_{d,l}\right)+\sum_{i=1}^{d-1}\sum_{j=1}^{i}(-1)^{i+j}\det\sigma\otimes\left(\underline{\delta_{j,r}\boxtimes\id^{\boxtimes d-j}}\left(\mathbb{J}\bullet\left(\delta_{i,r}\boxtimes \id^{\boxtimes d-i-1}\right)\right)\right).
\end{align*} 
\end{lemma}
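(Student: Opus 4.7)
The identity is a purely algebraic/combinatorial one about the quadruple $(\{\delta_{i,r}\},\{\delta_{i,l}\},\{\upsilon_{i,r}\},\{\upsilon_{i,l}\})$, so I would prove it by direct manipulation, not by a conceptual argument. The strategy has two ingredients: (1) eliminate all occurrences of the left-skew-derivations $\delta_{i,l}$ in favor of the right-skew-derivations $\delta_{k,r}$ ($k\le i$) and $\sigma$, using the single-step recursion from Lemma \ref{lemma: construction of delta_r and delta_l}(b); and (2) collapse composed pairs of $\sigma$'s via the skew-symmetry identity $\sigma^T\bullet\mathbb{J}\bullet\sigma=\mathbb{J}\bullet\mathrm{diag}(\det\sigma)$ supplied by Lemma \ref{lem: relation between Theta^{(i)}}(a), since this is exactly the mechanism by which the $\det\sigma\otimes(-)$ factor arises on the right-hand side.

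Concretely, first I would iterate the relation
$$\delta_{i,l}-\delta_{i-1,l}\boxtimes\id_V=(-1)^i\bigl(\sigma\boxtimes\delta_{i-1,r}-\delta_{i,r}\bigr),\qquad \delta_{0,l}=0,$$
to obtain the closed-form expansion
$$\delta_{i,l}=\sum_{k=1}^{i}(-1)^k\bigl(\sigma\boxtimes\delta_{k-1,r}-\delta_{k,r}\bigr)\boxtimes\id^{\otimes i-k}.$$
Substituting this into the two terms $\underline{\sigma\boxtimes\delta_{i,r}\boxtimes\id^{\boxtimes d-i}}(\mathbb{J}\bullet(\delta_{i,l}\boxtimes\id^{\boxtimes d-i}))$ and $\underline{\delta_{i,l}\boxtimes\id^{\boxtimes d-i+1}}(\mathbb{J}\bullet(\delta_{i,r}\boxtimes\id^{\boxtimes d-i}))$ of the first sum on the LHS produces a triple sum over indices $(i,k)$. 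I would then swap the order of summation and split each resulting contribution into three types: a $\underline{\sigma\boxtimes\delta_{k-1,r}\boxtimes\id}(\mathbb{J}\bullet(\sigma\boxtimes\ldots))$ piece, a $\underline{\sigma\boxtimes\delta_{k-1,r}\boxtimes\id}(\mathbb{J}\bullet(\delta_{i,r}\boxtimes\id))$ piece, and a $\underline{\delta_{k,r}\boxtimes\id}(\mathbb{J}\bullet\ldots)$ piece.

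The heart of the argument is now to apply the key identity
$$\underline{\sigma\boxtimes f}\bigl(\mathbb{J}\bullet(\sigma\boxtimes g)\bigr)=\det\sigma\otimes\underline{f}\bigl(\mathbb{J}\bullet g\bigr),$$
which is a direct consequence of $\sigma^T\bullet\mathbb{J}\bullet\sigma=\mathbb{J}\bullet\mathrm{diag}(\det\sigma)$ together with Remark \ref{rem: tensor is not unusual}. This collapses the first type of piece into exactly the $\det\sigma\otimes\underline{\delta_{j,r}\boxtimes\id^{\boxtimes d-j}}(\mathbb{J}\bullet(\delta_{i,r}\boxtimes\id^{\boxtimes d-i-1}))$ terms on the RHS (after a re-indexing $i\mapsto i+1$). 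The remaining cross-terms between the $\underline{\delta_{k,r}\ldots}$ pieces and the second sum $\sum_{i,j}(-1)^{i+j}C_{i,j}$ on the LHS should cancel pairwise by symmetric re-labeling of the summation indices, leaving only a telescoped boundary contribution.

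The main obstacle will be the bookkeeping: keeping the signs straight across the triple sum, verifying the pairwise cancellation in the off-diagonal $C_{i,j}$-terms, and identifying the surviving boundary contribution as $\sum_{j=1}^d(-1)^j\underline{\sigma\boxtimes\delta_{j,r}\boxtimes\id^{\boxtimes d-j}}(\mathbb{J}\bullet\delta_{d,l})$. To handle the latter cleanly I would \emph{not} fully expand $\delta_{d,l}$ itself, but instead telescope the outer sum over $i$ in the LHS using $\delta_{i,l}\boxtimes\id^{\boxtimes d-i}-\delta_{i-1,l}\boxtimes\id^{\boxtimes d-i+1}=(-1)^i(\sigma\boxtimes\delta_{i-1,r}-\delta_{i,r})\boxtimes\id^{\boxtimes d-i}$, so that only the top term $i=d$ contributes $\delta_{d,l}$ and all lower terms are absorbed into the $\det\sigma\otimes C_{i,j}$-part via the skew-symmetry identity. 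A short induction on $d$ (the base $d=1$ being immediate since $\delta_{1,r}=\delta_{1,l}$) then packages the bookkeeping neatly.
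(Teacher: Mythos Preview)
Your proposal is correct and uses essentially the same ingredients as the paper's proof: the recursion of Lemma~\ref{lemma: construction of delta_r and delta_l}(b) relating $\delta_{i,l}$ and $\delta_{i,r}$, and the skew-symmetry identity $\sigma^T\bullet\mathbb{J}\bullet\sigma=\mathbb{J}\bullet\mathrm{diag}(\det\sigma)$ from Lemma~\ref{lem: relation between Theta^{(i)}}(a). The only difference is organizational: the paper has already packaged the iterated recursion you write out as Proposition~\ref{prop: relation between delta_r and delta_l}(a,b), and it applies that identity in the \emph{opposite} direction---converting the $\delta_{j,r}$'s in the evaluator slot of the double sum into $\sigma^{\boxtimes i-j}\boxtimes\delta_{j,l}$'s, which then cancel against the existing $\underline{\delta_{i,l}\boxtimes\id}$ term---rather than expanding the $\delta_{i,l}$'s as you do. This lets the paper write the argument as a single chain of five equalities with no induction; your route will also work but carries more bookkeeping, and your suggested induction on $d$ is unnecessary once you invoke Proposition~\ref{prop: relation between delta_r and delta_l} directly.
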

\begin{proof}
\begin{align*}
&\sum_{i=1}^d(-1)^i\left(\underline{\sigma\boxtimes \delta_{i,r}\boxtimes\id^{\boxtimes d-i}_V} \left(\mathbb{J}\bullet (\delta_{i,l}\boxtimes\id^{\boxtimes d-i}_V)\right)+\underline{\delta_{i,l}\boxtimes\id^{\boxtimes d-i+1}_V}\left(\mathbb{J}\bullet(\delta_{{i},r}\boxtimes\id^{\boxtimes d-i}_V)\right)\right)\\
&+\sum_{i=1}^{d}\sum_{j=1}^i(-1)^{i+j}\underline{\delta_{j,r}\boxtimes\id^{\boxtimes d+1-j}}\left(\mathbb{J}\bullet\left(\delta_{i,r}\boxtimes \id^{\boxtimes d-i}\right)\right)\\
=&  \sum_{i=1}^d(-1)^i\left(\underline{\sigma\boxtimes \delta_{i,r}\boxtimes\id^{\boxtimes d-i}_V} \left(\mathbb{J}\bullet (\delta_{i,l}\boxtimes\id^{\boxtimes d-i}_V)\right)+\underline{\delta_{i,l}\boxtimes\id^{\boxtimes d-i+1}_V}\left(\mathbb{J}\bullet(\delta_{{i},r}\boxtimes\id^{\boxtimes d-i}_V)\right)\right.\\
&\left.+(-1)^{i+1}\sum_{j=1}^i(-1)^{j}\underline{\sigma^{\boxtimes i-j}\boxtimes\delta_{j,l}\boxtimes\id^{\boxtimes d+1-i}}\left(\mathbb{J}\bullet\left(\delta_{i,r}\boxtimes \id^{\boxtimes d-i}\right)\right)\right)\\
=& \sum_{i=1}^d(-1)^i\left(\underline{\sigma\boxtimes \delta_{i,r}\boxtimes\id^{\boxtimes d-i}_V} \left(\mathbb{J}\bullet (\delta_{i,l}\boxtimes\id^{\boxtimes d-i}_V)\right)-\sum_{j=1}^{i-1}(-1)^{j}\underline{\sigma\boxtimes\delta_{j,r}\boxtimes\id^{\boxtimes d-j}}\left(\mathbb{J}\bullet\left(\delta_{i,r}\boxtimes \id^{\boxtimes d-i}\right)\right)\right)\\
=& \sum_{i=1}^d(-1)^i\underline{\sigma\boxtimes \delta_{i,r}\boxtimes\id^{\boxtimes d-i}_V} \left(\mathbb{J}\bullet (\delta_{i,l}\boxtimes\id^{\boxtimes d-i}_V)\right)+\sum_{i=1}^d\sum_{j=1}^{i-1}(-1)^{j}\underline{\sigma\boxtimes\delta_{j,r}\boxtimes\id^{\boxtimes d-j}}\left(\mathbb{J}\bullet\left(\delta_{i,l}\boxtimes \id^{\boxtimes d-i}\right)\right)\\
&-\sum_{i=1}^d\sum_{j=1}^{i-1}(-1)^{j}\underline{\sigma\boxtimes\delta_{j,r}\boxtimes\id^{\boxtimes d-j}}\left(\mathbb{J}\bullet\left(\delta_{i-1,l}\boxtimes \id^{\boxtimes d-i+1}\right)\right)\\
&-\sum_{i=1}^d\sum_{j=1}^{i-1}(-1)^{i+j}\underline{\sigma\boxtimes\delta_{j,r}\boxtimes\id^{\boxtimes d-j}}\left(\mathbb{J}\bullet\left(\sigma\boxtimes \delta_{i-1,r}\boxtimes \id^{\boxtimes d-i}\right)\right)\\
=& \sum_{i=1}^d(-1)^i\underline{\sigma\boxtimes \delta_{i,r}\boxtimes\id^{\boxtimes d-i}_V} \left(\mathbb{J}\bullet (\delta_{i,l}\boxtimes\id^{\boxtimes d-i}_V)\right)+\sum_{i=1}^d\sum_{j=1}^{i-1}(-1)^{j}\underline{\sigma\boxtimes\delta_{j,r}\boxtimes\id^{\boxtimes d-j}}\left(\mathbb{J}\bullet\left(\delta_{i,l}\boxtimes \id^{\boxtimes d-i}\right)\right)\\
&-\sum_{i=1}^{d-1}\sum_{j=1}^{i}(-1)^{j}\underline{\sigma\boxtimes\delta_{j,r}\boxtimes\id^{\boxtimes d-j}}\left(\mathbb{J}\bullet\left(\delta_{i,l}\boxtimes \id^{\boxtimes d-i}\right)\right)\\
&-\sum_{i=1}^d\sum_{j=1}^{i-1}(-1)^{i+j}\underline{\sigma\boxtimes\delta_{j,r}\boxtimes\id^{\boxtimes d-j}}\left(\mathbb{J}\bullet\left(\sigma\boxtimes \delta_{i-1,r}\boxtimes \id^{\boxtimes d-i}\right)\right)\\
=& \sum_{j=1}^{d}(-1)^{j}\underline{\sigma\boxtimes\delta_{j,r}\boxtimes\id^{\boxtimes d-j}}\left(\mathbb{J}\bullet\delta_{d,l}\right)+\sum_{i=1}^{d-1}\sum_{j=1}^{i}(-1)^{i+j}\det\sigma\otimes\left(\underline{\delta_{j,r}\boxtimes\id^{\boxtimes d-j}}\left(\mathbb{J}\bullet\left(\delta_{i,r}\boxtimes \id^{\boxtimes d-i-1}\right)\right)\right),
\end{align*}
where the first and second equality hold by Proposition \ref{prop: relation between delta_r and delta_l}(a,b), the third equality holds by Lemma \ref{lemma: construction of delta_r and delta_l} and the fifth equality holds by Lemma \ref{lem: relation between Theta^{(i)}}.
\end{proof}

\begin{proof}[Proof of Theorem \ref{thm: twisted superpotential}]
We divide $\hat{\omega}$ into $6$ parts to show it is a ${\mu_B}_{\mid V}$-twisted superpotential.

\noindent$\bullet$ The first part:
\begin{align*}
&\tau_{d+2}^{d+1}(\mu_B\otimes\id^{\otimes d+1})\sum_{j=0}^{d}(-1)^j\left(\left( \lambda^T_Y\boxtimes(\mathbb{J}\bullet\sigma^{\boxtimes j})\boxtimes\lambda_Y\right)\otimes\id^{\otimes d-j}\right) \left(\id\otimes\tau^{j}_{d+1}\right)(1\otimes 1\otimes\omega)\\
=&\tau_{d+2}^{d+1}\sum_{j=0}^{d}(-1)^{j+1}\left(\left((\mathbb{J}\bullet \hdet\varsigma \bullet \lambda_Y)^T\boxtimes \sigma^{\boxtimes j} \boxtimes\lambda_Y\right)\otimes\id^{\otimes d-j}\right) \left(\id\otimes\tau^{j}_{d+1}\right)(1\otimes 1\otimes\omega)\\
&+\tau_{d+2}^{d+1}\sum_{j=0}^{d}(-1)^{j+1}\left(\left((\mathbb{J}\mathrm{div}_{\varsigma}\nu)^T\boxtimes \sigma^{\boxtimes j} \boxtimes\lambda_Y\right)\otimes\id^{\otimes d-j}\right) \left(\id\otimes\tau^{j}_{d+1}\right)(1\otimes 1\otimes\omega)
\\
=&\sum_{j=0}^{d}(-1)^{j+1}\tau_{d+2}^j\left(\lambda^T_Y\boxtimes\left(\left(\sigma^{\boxtimes j} \boxtimes \mathcal{I}^{d-j}\right)^T\bullet \mathbb{J}\bullet \sigma^{\boxtimes d} \right)\boxtimes \lambda_Y\right)\tau^{d+1}_{d+2}(1\otimes 1\otimes\omega)\\
&+\sum_{j=0}^{d}(-1)^{j+1}\tau_{d+2}^j\left(\lambda^T_Y\boxtimes\left(\left(\sigma^{\boxtimes j} \boxtimes \mathcal{I}^{d-j}\right)^T\bullet \mathbb{J}\right)\boxtimes \mathrm{div
}_\varsigma\nu\right)\tau^{d+1}_{d+2}(1\otimes 1\otimes\omega)
\\
=&(-1)^{d+1}\sum_{i=0}^{d}(-1)^{d-i}\left(\det\sigma^{\otimes i}\boxtimes\lambda^T_Y\boxtimes \left(\mathbb{J}\bullet\sigma^{\boxtimes d-i}\right)\boxtimes \lambda_Y\right)\left(\tau_{d+2}^i(\id\otimes\tau^{d}_{d+1})\right)(1\otimes 1\otimes\omega)\\
&+(-1)^{d+1}\sum_{j=0}^{d}(-1)^{d+j}\tau_{d+2}^j\left(\lambda^T_Y\boxtimes\left(\left(\sigma^{\boxtimes j} \boxtimes \mathcal{I}^{d-j+1}\right)^T\bullet \mathbb{J}\bullet \delta_{d,r}\right)\right)(1\otimes\omega)\\
&+(-1)^{d+1}\sum_{j=0}^{d}(-1)^{d+j}\tau^{j}_{d+2}(\id^{\otimes d+1}\otimes\mu_A)\underline{\left(\lambda^T_Y\boxtimes
(\sigma^{\boxtimes j}\boxtimes
\mathcal{I}^{d-j})^T\boxtimes \left( \mathbb{J}\bullet\sigma^{-T}\right)\right)^T}
\begin{pmatrix}
    1\otimes \omega\otimes\delta_{l;1}\\
    1\otimes\omega\otimes \delta_{l;2}
\end{pmatrix},
\end{align*}

\noindent where the second equality holds by Theorem \ref{thm: hdet is sigma d} and the third equality holds by Lemma \ref{lem: relation between Theta^{(i)}}.

\noindent$\bullet$  The second part:
\begin{align*}
&\tau_{d+2}^{d+1}(\mu_B\otimes\id^{\otimes d+1})\sum_{i=1}^{d}\sum_{j=0}^{d-i}(-1)^j\left((\det\sigma)^{\otimes i}\otimes\left( \lambda^T_Y\boxtimes(\mathbb{J}\bullet\sigma^{\boxtimes j})\boxtimes\lambda_Y\right)\otimes\id^{\otimes d-i-j}\right) \left(\tau^{i}_{d+2}(\id\otimes\tau^{i+j}_{d+1})\right)(1\otimes 1\otimes\omega)\\
=&\sum_{i=1}^{d}\sum_{j=0}^{d-i}(-1)^j\left((\det\sigma)^{\otimes i-1}\otimes\left( \lambda^T_Y\boxtimes(\mathbb{J}\bullet\sigma^{\boxtimes j})\boxtimes\lambda_Y\right)\otimes\id^{\otimes d-i-j+1}\right) \left(\tau^{i-1}_{d+2}(\id\otimes\tau^{i+j-1}_{d+1})\right)\\
&\left(\id^{\otimes 2}\otimes\tau^{d-1}_{d}(\mu_A\otimes \id^{\otimes d-1})\right)(1\otimes 1\otimes\omega)\\
=&(-1)^{d+1}\sum_{i=0}^{d-1}\sum_{j=0}^{d-i-1}(-1)^j\left((\det\sigma)^{\otimes i}\otimes\left( \lambda^T_Y\boxtimes(\mathbb{J}\bullet\sigma^{\boxtimes j})\boxtimes\lambda_Y\right)\otimes\id^{\otimes d-i-j}\right) \left(\tau^{i}_{d+2}(\id\otimes\tau^{i+j}_{d+1})\right)(1\otimes 1\otimes\omega),
\end{align*}

\noindent where the second equality holds by Theorem \ref{thm: properties of Koszul regular algebras} and re-indexing.

\noindent$\bullet$  The third part:
\begin{align*}
&\tau_{d+2}^{d+1}(\mu_B\otimes\id^{\otimes d+1})\sum_{j=1}^{d+1}\sum_{i=1}^{d}(-1)^{i+j} \tau^{j}_{d+2}\left(\lambda^T_Y\boxtimes\left((\sigma^{\boxtimes j}\boxtimes
\mathcal{I}^{d+1-j})^T\bullet\mathbb{J}\bullet(\delta_{i,r}\boxtimes \id^{\boxtimes d-i})\right)\right) (1\otimes\omega)\\
=&(-1)^{d+1}\tau_{d+2}^{d+1}(\mu_B\otimes\id^{\otimes d+1})\sum_{j=1}^{d+1}\sum_{i=1}^{d}(-1)^{i+j} \tau^{j}_{d+2}\left(\lambda^T_Y\boxtimes\left((\sigma^{\boxtimes j}\boxtimes
\mathcal{I}^{d+1-j})^T\bullet\mathbb{J}\bullet(\sigma^{\boxtimes d-i}\boxtimes\delta_{i,l})\right)\right) (1\otimes\omega)\\
=&(-1)^{d+1}\tau_{d+2}^{d+1}(\mu_B\otimes\id^{\otimes d+1})\sum_{j=1}^{d+1}\left(\sum_{i=1}^{d-j}(-1)^{i+j} \tau^{j}_{d+2}\left(\lambda^T_Y\boxtimes\left((\sigma^{\boxtimes j}\boxtimes
\mathcal{I}^{d+1-j})^T\bullet\mathbb{J}\bullet(\sigma^{\boxtimes d-i}\boxtimes\delta_{i,l})\right)\right) \right.\\
&+\left.\sum_{i=d-j+1}^{d-1}(-1)^{i+j} \tau^{j}_{d+2}\left(\lambda^T_Y\boxtimes\left((\sigma^{\boxtimes j}\boxtimes
\mathcal{I}^{d+1-j})^T\bullet\mathbb{J}\bullet(\sigma^{\boxtimes d-i}\boxtimes\delta_{i,l})\right)\right)\right) (1\otimes\omega)\\
&+(-1)^{d+1}\tau_{d+2}^{d+1}(\mu_B\otimes\id^{\otimes d+1})\sum_{j=1}^{d+1}(-1)^{d+j} \tau^{j}_{d+2}\left(\lambda^T_Y\boxtimes\left((\sigma^{\boxtimes j}\boxtimes
\mathcal{I}^{d+1-j})^T\bullet\mathbb{J}\bullet \delta_{d,l}\right)\right) (1\otimes\omega)\\
=&(-1)^{d+1}\sum_{j=1}^{d+1}(-1)^j\tau_{d+2}^{j-1}\left(\sum_{i=1}^{d-j}(-1)^{i} \left(\lambda^T_Y\boxtimes\scalebox{0.7}{$
    \begin{pmatrix}
    (\det\sigma)^{\otimes j-1} &\\
    &(\det\sigma)^{\otimes j-1}
\end{pmatrix}
$}\boxtimes\left(\mathbb{J}\bullet(\sigma^{\boxtimes d-j-i}\boxtimes\delta_{i,l})\right)\boxtimes\id\right)+\sum_{i=d-j+1}^{d-1}(-1)^{i}  \right.\\
&\left.\left(\lambda_Y^T\boxtimes
\scalebox{0.7}{$
\begin{pmatrix}
    (\det\sigma)^{\otimes d-i-1} &\\
    &(\det\sigma)^{\otimes d-i-1}
\end{pmatrix}
$}\boxtimes \left((\sigma^{\boxtimes j+i-d}\boxtimes
\mathcal{I}^{d+1-j})^T\bullet\mathbb{J}\bullet\delta_{i,l}\right)\boxtimes\id\right)\right)\left(1\otimes\tau_{d}^{d-1}(\mu_A\otimes\id^{\otimes d-1})(\omega)\right)\\
&+(-1)^{d+1}
\sum_{j=1}^{d+1}(-1)^{d+j} 
\tau^{j-1}_{d+2}(\id^{\otimes d+1}\otimes\mu_A)
\underline{\left(\lambda^T_Y\boxtimes
(\sigma^{\boxtimes j}\boxtimes
\mathcal{I}^{d-j})^T\boxtimes \left(\scalebox{0.7}{$\begin{pmatrix}
(\det\sigma)^{-1}&\\
&(\det\sigma)^{-1}
\end{pmatrix}$}\bullet\sigma^{T}\bullet \mathbb{J}\right)\right)^T}
\begin{pmatrix}
    1\otimes \omega\otimes\delta_{l;1}\\
    1\otimes\omega\otimes \delta_{l;2}
\end{pmatrix}
\\
=&\sum_{j=0}^{d}\sum_{i=1}^{d-1}(-1)^{i+j+1}\tau_{d+2}^{j} \left(\lambda^T_Y\boxtimes\left((\sigma^{\boxtimes j}\boxtimes
\mathcal{I}^{d-j+1})^T\bullet\mathbb{J}\bullet(\sigma^{\boxtimes d-i-1}\boxtimes\delta_{i,l}\boxtimes \id)\right)\right) (1\otimes\omega)\\
&+(-1)^{d+1}
\sum_{j=0}^{d}(-1)^{d+j+1} 
\tau^{j}_{d+2}(\id^{\otimes d+1}\otimes\mu_A)
\underline{\left(\lambda^T_Y\boxtimes
(\sigma^{\boxtimes j+1}\boxtimes
\mathcal{I}^{d-j-1})^T\boxtimes \left(\mathbb{J}\bullet\sigma^{-T}\right)\right)^T}
\begin{pmatrix}
    1\otimes \omega\otimes\delta_{l;1}\\
    1\otimes\omega\otimes \delta_{l;2}
\end{pmatrix}\\
=&(-1)^{d+1}\sum_{j=0}^{d}\sum_{i=1}^{d-1}(-1)^{i+j}\tau_{d+2}^{j} \left(\lambda^T_Y\boxtimes\left((\sigma^{\boxtimes j}\boxtimes
\mathcal{I}^{d-j+1})^T\bullet\mathbb{J}\bullet(\delta_{i,r}\boxtimes \id^{\boxtimes d-i})\right)\right) (1\otimes\omega)\\
&+(-1)^{d+1}
\sum_{j=0}^{d}(-1)^{d+j+1} 
\tau^{j}_{d+2}(\id^{\otimes d+1}\otimes\mu_A)
\underline{\left(\lambda^T_Y\boxtimes
(\sigma^{\boxtimes j+1}\boxtimes
\mathcal{I}^{d-j-1})^T\boxtimes \left(\mathbb{J}\bullet\sigma^{-T}\right)\right)^T}
\begin{pmatrix}
    1\otimes \omega\otimes\delta_{l;1}\\
    1\otimes\omega\otimes \delta_{l;2}
\end{pmatrix},
\end{align*}

\noindent where the first equality holds by Proposition \ref{prop: relation between delta_r and delta_l}(a), the third one holds by Lemma \ref{lem: relation between Theta^{(i)}}, the fourth one holds by  Theorem \ref{thm: properties of Koszul regular algebras}, Lemma \ref{lem: relation between Theta^{(i)}} and \eqref{eq: t-inverse of sigma}, and the fifth one holds by Proposition \ref{prop: relation between delta_r and delta_l}(b).

\noindent$\bullet$  The fourth part:
\begin{align*}
& \tau_{d+2}^{d+1}(\mu_B\otimes\id^{\otimes d+1}) \sum_{i=1}^{d}(-1)^{i} \left(\lambda^T_Y\boxtimes\left(\mathbb{J}\bullet(\delta_{i,r}\boxtimes \id^{\boxtimes d-i})\right)\right) (1\otimes\omega)\\
=&\sum_{i=1}^{d}(-1)^{i+1} \left(\left((\delta_{i,r}\boxtimes \id^{ \boxtimes d-i})^T\bullet \mathbb{J}\bullet\sigma^{\boxtimes d}\right)\boxtimes \lambda_Y\right) (\omega\otimes 1)+\sum_{i=1}^{d}(-1)^{i+1} \left(\left((\delta_{i,r}\boxtimes \id^{ \boxtimes d-i})^T\bullet \mathbb{J}\right)\boxtimes \mathrm{div}_\varsigma\nu\right) (\omega\otimes 1)\\\
=&\sum_{i=1}^d(-1)^i\left(\left(\left(\sigma^{\boxtimes d+1}\right)^T\bullet\mathbb{J}\bullet(\delta_{i,r}\boxtimes\id^{\boxtimes d-i})\right)\boxtimes \lambda_Y\right) (\omega\otimes 1)+\sum_{i=1}^d(-1)^{i+1}\underline{\delta_{i,r}\boxtimes\id^{\boxtimes d-i+1}}(\mathbb{J}\bullet\delta_{d,r})(\omega)\\
     &+\sum_{i=1}^d(-1)^{i+1}(\id^{\otimes d+1}\otimes\mu_A)\underline{\delta_{i,r}\boxtimes\id^{\boxtimes d-i+1}}\left(
     \left(\mathcal{I}^{d}\boxtimes \left(\mathbb{J}\bullet \sigma^{-T}\right)\right)\cdot
     \begin{pmatrix}
        \omega\otimes\delta_{l;1}\\
         \omega\otimes\delta_{l;2}
     \end{pmatrix}
     \right),
\end{align*}

\noindent where the first equality holds by Theorem \ref{thm: hdet is sigma d},  and the second one holds by Lemma \ref{lem: realtions between Gamma}.

We use Proposition \ref{prop: relation between delta_r and delta_l}(c) to replace $-\sum_{i=1}^{d-1}\upsilon_{i,r}\otimes\id^{\otimes d-i}$, and there are two remaining parts.

\noindent$\bullet$  The fifth part:
\begin{align*}
&\tau_{d+2}^{d+1}(\mu_B\otimes\id^{\otimes d+1})(-1)
\left(\sum_{i=1}^{d-1}(-1)^i(\det\sigma)^{\otimes d-i}\otimes\upsilon_{i,l}-\sum_{i=1}^{d-1}\sum_{u=1}^i(-1)^u(\det\sigma)^{\otimes d-i}\otimes\right.\\
&\qquad\qquad\qquad\quad\left.\underline{\sigma\boxtimes \delta_{u,r}\boxtimes\id^{\boxtimes i-u}_V} \left(\mathbb{J}\bullet (\delta_{u,l}\boxtimes\id^{\boxtimes i-u}_V)\right)+\underline{\delta_{u,l}\boxtimes\id^{\boxtimes i-u+1}_V}\left(\mathbb{J}\bullet(\delta_{{u},r}\boxtimes\id^{\boxtimes i-u}_V)\right)\right)(\omega)\\
=&-\left(\sum_{i=1}^{d-1}(-1)^i(\det\sigma)^{\otimes d-i-1}\otimes\upsilon_{i,l}\otimes \id -\sum_{i=1}^{d-1}\sum_{u=1}^i(-1)^u(\det\sigma)^{\otimes d-i-1}\otimes\right.\\
&\left.\underline{\sigma\boxtimes \delta_{u,r}\boxtimes\id^{\boxtimes i-u+1}_V} \left(\mathbb{J}\bullet (\delta_{u,l}\boxtimes\id^{\boxtimes i-u+1}_V)\right)+\underline{\delta_{u,l}\boxtimes\id^{\boxtimes i-u+2}_V}\left(\mathbb{J}\bullet(\delta_{{u},r}\boxtimes\id^{\boxtimes i-u+1}_V)\right)\right)\tau_{d}^{d-1}(\mu_A\otimes\id^{\otimes d-1})(\omega)\\
=&(-1)^{d+1}\left(-\sum_{i=1}^{d-1}\upsilon_{i,r}\otimes\id^{\otimes d-i}(\omega)\right).
\end{align*}
\noindent It holds by Proposition \ref{prop: relation between delta_r and delta_l}(d).

\noindent$\bullet$  The sixth part:
\begin{align*}
&\tau_{d+2}^{d+1}(\mu_B\otimes\id^{\otimes d+1})\left(\sum_{u=1}^d(-1)^u\left(\underline{\sigma\boxtimes \delta_{u,r}\boxtimes\id^{\boxtimes d-u}_V} \left(\mathbb{J}\bullet (\delta_{u,l}\boxtimes\id^{\boxtimes d-u}_V)\right)+\underline{\delta_{u,l}\boxtimes\id^{\boxtimes d-u+1}_V}\left(\mathbb{J}\bullet(\delta_{{u},r}\boxtimes\id^{\boxtimes d-u}_V)\right)\right)\right.\\    &\left.+\sum_{i=1}^{d}\sum_{j=1}^i(-1)^{i+j}\underline{\delta_{j,r}\boxtimes\id^{\boxtimes d+1-j}}\left(\mathbb{J}\bullet\left(\delta_{i,r}\boxtimes \id^{\boxtimes d-i}\right)\right)\right)(\omega)\\
    =&\tau_{d+2}^{d+1}(\mu_B\otimes\id^{\otimes d+1})\sum_{j=1}^{d}(-1)^{j}\underline{\sigma\boxtimes\delta_{j,r}\boxtimes\id^{\boxtimes d-j}}\left(\mathbb{J}\bullet\delta_{d,l}\right)(\omega)\\
    &+\tau_{d+2}^{d+1}(\mu_B\otimes\id^{\otimes d+1})\sum_{i=1}^{d-1}\sum_{j=1}^{i}(-1)^{i+j}\det\sigma\otimes\left(\underline{\delta_{j,r}\boxtimes\id^{\boxtimes d-j}}\left(\mathbb{J}\bullet\left(\delta_{i,r}\boxtimes \id^{\boxtimes d-i-1}\right)\right)\right)(\omega)\\
    =&\tau_{d+2}^{d+1}(\mu_B\otimes\id^{\otimes d+1})\sum_{j=1}^{d}(-1)^{j}\underline{\sigma\boxtimes\delta_{j,r}\boxtimes\id^{\boxtimes d-j}}\left(\mathbb{J}\bullet\delta_{d,l}\right)(\omega)\\
    =&\sum_{j=1}^{d}(-1)^{j}(\id^{\otimes d+1}\otimes\mu_A)\underline{\delta_{j,r}\boxtimes\id^{\boxtimes d-j+1}}
    \left(\mathcal{I}^{d}\boxtimes
    \left(
    \begin{pmatrix}
        (\det\sigma)^{-1} & \\
            &    (\det\sigma)^{-1}
    \end{pmatrix}\bullet
    \sigma^T\bullet
    \mathbb{J}
    \right)\right)\cdot
    \begin{pmatrix}
        \omega\otimes \delta_{l;1}\\
        \omega\otimes \delta_{l;2}
    \end{pmatrix}
    \\
    &+\sum_{i=1}^{d-1}\sum_{j=1}^{i}(-1)^{i+j}\left(\underline{\delta_{j,r}\boxtimes\id^{\boxtimes d-j+1}}\left(\mathbb{J}\bullet\left(\delta_{i,r}\boxtimes \id^{\boxtimes d-i}\right)\right)\right)\tau_{d}^{d-1}(\mu_A\otimes\id^{\otimes d-1})(\omega)\\
    =&\sum_{j=1}^{d}(-1)^{j}(\id^{\otimes d+1}\otimes\mu_A)\underline{\delta_{j,r}\boxtimes\id^{\boxtimes d-j+1}}
    \left(\mathcal{I}^{d}\boxtimes
    \left(  \mathbb{J}^{-1}
    \bullet
    \sigma^{-T}  
    \right)\right)\cdot
    \begin{pmatrix}
        \omega\otimes \delta_{l;1}\\
        \omega\otimes \delta_{l;2}
    \end{pmatrix}
    \\
    &+(-1)^{d+1}\sum_{i=1}^{d-1}\sum_{j=1}^{i}(-1)^{i+j}\left(\underline{\delta_{j,r}\boxtimes\id^{\boxtimes d-j+1}}\left(\mathbb{J}\bullet\left(\delta_{i,r}\boxtimes \id^{\boxtimes d-i}\right)\right)\right)(\omega),
\end{align*}
\noindent where the first equality holds by Lemma \ref{lem: transfer for the last part of twisted superpotential}, and the third one holds by \eqref{eq: t-inverse of sigma} and Theorem \ref{thm: properties of Koszul regular algebras}.

Then $\hat{\omega}$ is a ${\mu_{B}}_{\mid \hat{V}}$-twisted superpotential by combining those six parts above.
 
The Koszul algebra $B\cong T(\hat{V})/(\hat{R})$ where 
$$\hat{R}=R\oplus k\{y_2y_1-p_{12}y_1y_2-p_{11}y_1^2\}\oplus k\{y_iv-\sigma_{i1}(v)y_1-\sigma_{i2}(v)y_2, \forall v\in V,i=1,2\}.$$
Write $\hat{W}_0=k$, $\hat{W}_1=\hat{V}$ and  
$\hat{W}_i=\bigcap_{j=0}^i\hat{V}^{\otimes j} \otimes \hat{R}\otimes \hat{V}^{\otimes i-j-2}$ for any $i\geq 2$. Using Lemma \ref{lem: prop for Gamma}, one obtains that 
$$
\hat{\omega}\in \hat{R}\otimes \hat{V}^{\otimes d}.
$$
Then we have $\hat{\omega}\in \hat{W}_{d+2}$ by the fact that $\hat{\omega}$ is a ${\mu_{B}}_{\mid \hat{V}}$-twisted superpotential. The last assertion follows by Theorem \ref{thm: properties of Koszul regular algebras}.
\end{proof}

\vskip7mm
\noindent {\bf Acknowledgments.} Yuan Shen is supported by Zhejiang Provincial Natural Science Foundation of China under Grant No. LY24A010006 and National Natural Science Foundation of China under Grant Nos. 11701515 and 12371101. Xin Wang is supported by Doctoral Research Fund of Shandong Jianzhu University under Grant No. XNBS1943.

\end{document}